\numberwithin{equation}{section}
\newcommand{\R}{\mathbb{R}}
\newcommand{\T}{\mathbb{T}}
\newcommand{\Z}{\mathbb{Z}}
\newcommand{\N}{\mathbb{N}}
\newcommand{\cA}{\mathcal{A}}
\newcommand{\cE}{\mathcal{E}}
\newcommand{\cJ}{\mathcal{J}}
\newcommand{\cL}{\mathcal{L}}
\newcommand{\cN}{\mathcal{N}}
\newcommand{\cQ}{\mathcal{Q}}
\newcommand{\cS}{\mathcal{S}}
\newcommand{\cT}{\mathcal{T}}
\newcommand{\ud}{\mathrm{d}}
\newcommand{\bB}{\mathbf{B}}
\newcommand{\bC}{\mathbf{C}}
\newcommand{\bE}{\mathbf{E}}
\renewcommand{\bf}{\mathbf{f}}
\newcommand{\bg}{\mathbf{g}}
\newcommand{\bw}{\mathbf{w}}
\newcommand{\bL}{\mathbf{L}}
\newcommand{\bp}{\mathbf{p}}
\newcommand{\bq}{\mathbf{q}}
\newcommand{\bT}{\mathbf{T}}
\newcommand{\bzero}{\mathbf{0}}
\newcommand{\bdeta}{\boldsymbol{\eta}}
\newcommand{\bmu}{\boldsymbol{\mu}}
\newcommand{\bxi}{\boldsymbol{\xi}}
\newcommand{\bzeta}{\boldsymbol{\zeta}}
\newcommand{\balpha}{\boldsymbol{\alpha}}
\newcommand{\dbtilde}[1]{\accentset{\approx}{#1}}
\newcommand{\btmu}{\boldsymbol{\tilde{\mu}}}
\newcommand{\bttmu}{\boldsymbol{\dbtilde{\mu}}}
\newcommand{\sE}{\mathscr{E}}
\newcommand{\sF}{\mathscr{F}}
\newcommand{\fK}{\mathfrak{K}}
\newcommand{\fm}{\mathfrak{m}}
\newcommand{\fN}{\mathfrak{N}}
\newcommand{\fR}{\mathfrak{R}}
\def\inn#1#2{\langle#1,#2\rangle}
\def\BLreg#1#2{\mathrm{BL}_{\mathrm{reg}}(#1,#2)}
\def\BLscale#1#2#3#4{\mathrm{BL}_{#3,#4}(#1,#2)}
\def\BL#1#2{\mathrm{BL}(#1,#2)}
\def\codim#1#2{\mathrm{codim}(#1\,|\,#2)}
\theoremstyle{plain}
\newtheorem{theorem}{Theorem}[section]
\newtheorem{lemma}[theorem]{Lemma}
\newtheorem{proposition}[theorem]{Proposition}
\newtheorem{corollary}[theorem]{Corollary}
\theoremstyle{definition}
\newtheorem{definition}[theorem]{Definition}
\newtheorem{remark}[theorem]{Remark}
\newcommand{\supp}{\mathrm{supp}\,}
\newcommand{\diam}{\mathrm{diam}\,}
\newcommand{\dist}{\mathrm{dist}}
\begin{document}

\title[Localised multilinear restriction]{Localised variants of multilinear restriction}

\date{}

\begin{abstract} We revisit certain localised variants of the Bennett--Carbery--Tao multilinear restriction theorem, recently proved by Bejenaru. We give a new proof of Bejenaru's theorem, relating the estimates to the theory of Kakeya--Brascamp--Lieb inequalities. Moreover, the new proof allows for a substantial generalisation, exploiting the full power of the Kakeya--Brascamp--Lieb theory. 
\end{abstract}

\author[D. Beltran]{ David Beltran }
\address{David Beltran: Departament d’An\`alisi Matem\`atica, Universitat de Val\`encia, Dr. Moliner 50, 46100 Burjassot, Spain}
\email{david.beltran@uv.es}
\thanks{D.B. supported by the AEI grants RYC2020-029151-I  and PID2022-140977NA-I00.}

\author[J. Duncan]{ Jennifer Duncan}
\address{Jennifer Duncan:
Instituto de Ciencias Matem\'aticas CSIC-UAM-UC3M-UCM, C. de Nicolas Cabrera 13-15, 28049 Madrid, Spain}
\email{jennifer.duncan@icmat.es}
\thanks{J.D. supported by ERC grant 834728 and Severo Ochoa grant CEX2019-000904-S}

\author[J. Hickman]{ Jonathan Hickman }
\address{Jonathan Hickman: University of Edinburgh, School of Mathematics, James Clerk Maxwell Building, The King's Buildings, Peter Guthrie Tait Road, Edinburgh, EH9 3FD, UK.}
\email{jonathan.hickman@ed.ac.uk}

\maketitle




\section{Introduction} 




\subsection{Background}\label{subsec: background} For $1 \leq d \leq  n$,  let $\Sigma \colon U \to \R^n$ be a parametrisation of a smooth $d$-dimensional submanifold $S$ of $\R^n$. That is, $U \subset \R^d$ is an open, connected neighbourhood of the origin, $\Sigma \colon U \to \R^n$ is a smooth, injective, regular map\footnote{In particular, $\Sigma \in C^{\infty}(U)$ and $\bigwedge_{j = 1}^{d} \tfrac{\partial \Sigma}{\partial u_j} (u) \neq 0$ for all $u \in U$.} and $S = \Sigma(U)$. By an abuse of notation, we shall often simply refer to $S$, with the tacit understanding that our analysis depends on a choice of parametrisation. 

Given $a \in C^{\infty}_c(U)$ satisfying $0 \leq a(u) \leq 1$ for all $u \in U$, we define the \textit{extension operator} $E_S$ associated to $S$ (or, more precisely, $\Sigma \colon U \to \R^n$) by
\begin{equation}\label{eq: extension def}
    E_S f(x) := \int_U e^{i x \cdot \Sigma(u)} f(u)\, a(u)\ud u, \qquad f \in L^1(S), \,\, x \in \R^n.
\end{equation}
Here and below, again by an abuse of notation, we write $L^p(S)$ in place of $L^p(U)$ for $1 \leq p \leq \infty$. We refer to $a$ as the \textit{amplitude} of $E_S$ and let $\rho(E_S) := \diam \supp a > 0$.

For $\xi = \Sigma(u) \in S$, where $u \in U$, the \textit{tangent space} $T_{\xi}S$ is the $d$-dimensional subspace of $\R^n$ spanned by the vectors $\tfrac{\partial \Sigma}{\partial u_1}(u), \dots, \tfrac{\partial \Sigma}{\partial u_d}(u)$, and the \textit{normal space} $N_{\xi}S$ is the orthogonal complement of $T_{\xi}S$ in $\R^n$.

\begin{definition} Fix $2 \leq k \leq n$ and for $1 \leq j \leq k$ let $S  = \Sigma_j(U_j)$ be a smooth hypersurface in $\R^n$. Further, let $\bq_k := (q_k)_{j=1}^k$ where $q_k := \tfrac{2}{k-1}$. We say $\sE = ((S_j)_{j=1}^k, \bq_k)$ is a \textit{transverse ensemble} if there exists some $\nu > 0$ such that\footnote{For $1 \leq j \leq k$, let $V_j$ be a vector subspace of $\R^n$ of dimension $d_j$ and $\{v_{j, 1}, \dots, v_{j, d_j}\}$ be a choice of orthonormal basis of $V_j$. We define 
\begin{equation*}
   |V_1 \wedge \cdots \wedge V_k| := \Big| \bigwedge_{j = 1}^k \bigwedge_{k = 1}^{d_j} v_{j,k}\Big|,
\end{equation*}
noting that this quantity is independent of the choice of orthonormal bases.}
\begin{equation*}
    \Big| \bigwedge_{j = 1}^k N_{\xi_j} S_j\Big| \geq \nu > 0 \qquad \textrm{where $\xi_j := \Sigma_j(0) \in S_j$ for $1 \leq j \leq k$.}
\end{equation*}
\end{definition}

A fundamental result in modern Fourier analysis is the celebrated Bennett--Carbery--Tao theorem. Here and below, $Q_R := [-R, R]^n$ for $R > 0$.

\begin{theorem}[Bennett--Carbery--Tao {\cite[\S5]{BCT2006}}]\label{thm: BCT}  Let $2 \leq k \leq n$ and $\sE = ((S_j)_{j = 1}^k, \bq_k)$ be a transverse ensemble in $\R^n$. There exists $\rho_{\sE} > 0$ such that for $\max_j\rho(E_{S_j}) < \rho_{\sE}$ the following holds. For all $\varepsilon >0$, there exists a constant $C_{\varepsilon} \geq 1$ such that
\begin{equation}\label{eq: BCT}
\int_{Q_R}\prod_{j = 1}^k |E_{S_j} f_j|^{q_k} \leq C_{\varepsilon} R^{\varepsilon} \prod_{j = 1}^k \|f_j\|_{L^2(S_j)}^{q_k}
\end{equation}
holds for all $R \geq 1$ and $f_j \in L^2(S_j)$, $1 \leq j \leq k$.  
\end{theorem}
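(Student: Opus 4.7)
\emph{Proof proposal.} My approach combines a wave packet decomposition at the scale $R^{-1/2}$ with a multilinear Kakeya-type input from the Brascamp--Lieb theory, together with an induction on scales. For each $1 \leq j \leq k$, cover $\supp a_j$ by caps $\tau \subset U_j$ of diameter $\delta := R^{-1/2}$ and, via a smooth partition of unity, decompose $E_{S_j} f_j = \sum_\tau E_{S_j}(f_j \chi_\tau)$. Standard stationary/non-stationary phase analysis shows that each $E_{S_j}(f_j \chi_\tau)$ is essentially concentrated, up to rapidly decaying Schwartz tails, on a single tube $T_\tau$ of dimensions $R \times R^{1/2} \times \cdots \times R^{1/2}$ oriented along the normal direction $N_{\Sigma_j(c_\tau)} S_j$ at the centre $c_\tau$ of $\tau$.

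Partitioning $Q_R$ into a grid of $R^{1/2}$-balls $B$, the uncertainty principle implies that each $|E_{S_j} f_j|$ is morally constant on each such $B$, so the left-hand side of \eqref{eq: BCT} may be bounded by $\sum_B |B| \prod_j \|E_{S_j} f_j\|_{L^\infty(B)}^{q_k}$. Inserting the wave packet decomposition and exploiting $L^2$-orthogonality of wave packets associated with disjoint tubes reduces the problem to controlling a weighted multilinear Kakeya quantity of the form
\[
\int_{Q_R} \prod_{j=1}^k \Bigl(\sum_{\tau} c_{j,\tau}\, \chi_{T_\tau}(x)\Bigr)^{1/(k-1)} \ud x,
\]
where $c_{j,\tau} \geq 0$ are weights related to $\|f_j \chi_\tau\|_{L^2}^2$. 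The transversality hypothesis $\bigl|\bigwedge_{j=1}^k N_{\xi_j} S_j\bigr| \geq \nu$, combined with the smallness of $\rho(E_{S_j})$, guarantees a quantitative transversality of the tube directions across distinct $j$. An application of the multilinear Kakeya inequality---itself an instance of the Kakeya--Brascamp--Lieb theory---then yields the desired bound up to a factor $R^\varepsilon$.

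The $R^\varepsilon$ loss is absorbed through an induction on scales in the spirit of Bourgain--Guth: I would assume \eqref{eq: BCT} holds at all scales $R' < R$ with constant $C_\varepsilon (R')^\varepsilon$, run the reduction above at scale $R$, and apply the induction hypothesis at scale $R^{1/2}$ inside each ball $B$ to close the bootstrap; iterating $O(\log R)$ times yields the final $C_\varepsilon R^\varepsilon$ loss. The main technical obstacles are, first, controlling the Schwartz tails in the wave packet decomposition so that they contribute only a negligible error to the estimate, and second, verifying that the surface-level transversality hypothesis truly upgrades to a quantitative transversality of the associated tube families, so that the multilinear Kakeya bound can be invoked with constants that are uniform in $R$ and depend only on the ensemble $\sE$ through the constant $\nu$.
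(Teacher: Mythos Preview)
Your approach is essentially the same as the paper's: Theorem~\ref{thm: BCT} is recovered here as the $r_j=0$ specialisation of Theorem~\ref{thm: nested}, whose proof (Lemma~\ref{lem: recursive step}) runs exactly the scheme you describe---wave packet decomposition at scale $R^{-1/2}$, application of the scale-$R^{1/2}$ induction hypothesis on each $R^{1/2}$-cube, and reduction of the resulting sum over cubes to a multilinear Kakeya inequality coming from the Kakeya--Brascamp--Lieb theory. One technical correction worth flagging: a cap decomposition alone does \emph{not} concentrate $E_{S_j}(f_j\chi_\tau)$ on a single tube, only on a slab of parallel tubes with the correct direction; both the paper (\S\ref{subsec: wp}) and \cite{BCT2006} supplement the cap decomposition with a Fourier series expansion in the spatial variable to produce wave packets $f_{\theta,v}$ that are genuinely tube-localised, and it is the $L^2$-orthogonality of \emph{these}, together with their spatial localisation, that feeds the Kakeya step.
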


Here we investigate localised variants of Theorem~\ref{thm: BCT} where some of the functions $f_j$ are assumed to be supported in thin sets. Under such hypotheses, we can hope to gain in the size of the constant appearing on the right-hand side of \eqref{eq: BCT}. This phenomenon was observed in recent papers of Bejenaru~\cite{Bejenaru2017, Bejenaru2022}, and we begin by describing the existing results. 

Let $S = \Sigma(U)$ be a $d$-dimensional submanifold of $\R^n$, as above. For $1 \leq d' \leq d$, we say $S' \subseteq S$ is a \textit{$d'$-dimensional submanifold of $S$} (or a \textit{codimension $d-d'$ submanifold of $S$}) if $S' = \Sigma(M)$ for $M = \gamma(U')$ a $d'$-dimensional submanifold of $\R^d$ with parametrisation $\gamma \colon U' \to U$, where we always assume that $\gamma(U')$ is compactly contained in $U$ and $\gamma(0) = 0$. In this case, we write $\codim{S'}{S} := d - d'$. Note that $S'$ is also a $d'$-dimensional submanifold of $\R^n$, with $S' = \Sigma'(U')$ for $\Sigma' := \Sigma \circ \gamma$ satisfying 
\begin{equation}\label{eq: compatible param}
    \Sigma(0) = \Sigma'(0). 
\end{equation}
Continuing to abuse notation, given $\mu > 0$ we say $f \in L^2(S)$ is \textit{supported in $\cN_{\mu} S'$}, or $\supp f \subseteq \cN_{\mu}S'$, if $f$ is essentially supported in $\cN_{\mu} M$, where $\cN_{\mu} M \subseteq \R^d$ is the (open Euclidean) $\mu$-neighbourhood of $M$.

\begin{definition} Fix $2 \leq k \leq n$ and for $1 \leq j \leq k$ let $S_j$ be a smooth hypersurface and $S_j' = \Sigma_j'(U_j')$ be a smooth submanifold of $S_j$. Further let $\bq_k := (q_k)_{j=1}^k$ where $q_k := \tfrac{2}{k-1}$. We say $\sE = ((S_j, S_j')_{j = 1}^k, \bq_k)$ is a \textit{transverse ensemble} in $\R^n$ if for some $\nu >0$ we have
\begin{equation}\label{eq: trans}
     \Big|\bigwedge_{j = 1}^k N_{\xi_j'}S_j'\Big| \geq \nu  \qquad \textrm{where $\xi_j' := \Sigma_j'(0) \in S_j'$ for $1 \leq j \leq k$.}
\end{equation}
We say $\sE$ has \textit{codimension} $(m_j)_{j = 1}^k$, where $m_j := \codim{S_j'}{S_j}$ for $1 \leq j \leq k$.
\end{definition}

With the above definition, the local version of Theorem~\ref{thm: BCT} reads as follows. 

\begin{theorem}[Bejenaru \cite{Bejenaru2022}]\label{thm: Bejenaru}
    Let $2 \leq k \leq n$ and $\sE = ((S_j, S_j')_{j = 1}^k, \bq_k)$ be a transverse ensemble in $\R^n$ of codimension $(m_j)_{j=1}^k$. There exists a constant $\rho_{\sE} > 0$ such that for $\max_j\rho(E_{S_j}) < \rho_{\sE}$ the following holds. For all $\varepsilon >0$, there exists a constant $C_{\varepsilon} \geq 1$ such that
\begin{equation}\label{eq: local multlinear}
\int_{Q_R}\prod_{j=1}^k|E_{S_j}f_j|^{q_k} \leq C_{\varepsilon} R^{\varepsilon} \prod_{j=1}^k\mu_j^{m_jq_k/2} \prod_{j=1}^k\|f_j\|_{L^2(S_j)}^{q_k},
    \end{equation}
    holds for all $R \geq 1$ and $f_j \in L^2(S_j)$ satisfying $\supp f_j \subseteq \cN_{\mu_j}S_j'$ for some $0 < \mu_j < 1$. 
\end{theorem}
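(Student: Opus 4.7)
The idea is to follow the Bennett--Carbery--Tao induction-on-scales paradigm of \cite{BCT2006}, but with the geometric Kakeya input replaced by a Kakeya--Brascamp--Lieb (KBL) inequality adapted to the transversality hypothesis \eqref{eq: trans}; the improved $\mu_j$-dependence on the right-hand side of \eqref{eq: local multlinear} will be extracted from the thinness of the direction sets of wave packets supported in $\cN_{\mu_j} S_j'$. The first step is a wave packet decomposition: for each $j$, partition $S_j$ into a boundedly overlapping family of $R^{-1/2}$-caps $\{\tau_j\}$ and decompose $f_j = \sum_{\tau_j} f_{j,\tau_j}$ with $\supp f_{j,\tau_j} \subseteq \tau_j$. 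Since $\supp f_j \subseteq \cN_{\mu_j} S_j'$, only caps meeting $\cN_{\mu_j} S_j'$ contribute---a subfamily of cardinality $\lesssim \mu_j^{m_j} R^{(n-1)/2}$ in the non-trivial regime $\mu_j \geq R^{-1/2}$. Standard uncertainty-principle considerations show that $E_{S_j} f_{j,\tau_j}$ is essentially constant on tubes $T_{j,\tau_j}$ of dimensions $R^{1/2}\times\cdots\times R^{1/2}\times R$ with long axis along the normal $N_{\xi_{\tau_j}} S_j$, so the set of tube directions lies in a $\mu_j$-neighborhood of the $(n-1-m_j)$-dimensional bundle $\{N_{\xi'} S_j : \xi' \in S_j'\}$ inside $\Sp^{n-1}$.

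\textbf{Reduction to a KBL inequality.} Passing from the extension side to the Kakeya side via the standard $L^2$-orthogonality argument on $R^{1/2}$-balls reduces matters, up to a factor $R^{C\varepsilon}$, to a multilinear tube-incidence estimate for the collections $\cT_j := \{T_{j,\tau_j}\}$. The crucial new input is that the tube directions in $\cT_j$ lie in a $\mu_j$-neighborhood of an $(n-1-m_j)$-dimensional set, while the transversality hypothesis \eqref{eq: trans} on the normal spaces $N_{\xi_j'} S_j'$ supplies a non-degenerate Brascamp--Lieb datum with maps the orthogonal projections onto these normal spaces. Invoking the corresponding KBL inequality extracts a factor $\prod_j \mu_j^{m_j}$ from the tube-incidence quantity, which translates to the claimed $\prod_j \mu_j^{m_j q_k /2}$ on the extension side after accounting for the $q_k/2$ exponent arising in the reduction.

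\textbf{Closing the induction.} Iterating the above steps down the scales---each iteration losing $R^{C\varepsilon}$ while replacing $R$ by $R^{1/2}$---in $O(\log\log R)$ rounds reduces to a trivial base case at scale $R \lesssim 1$, producing the desired bound $C_\varepsilon R^\varepsilon$ via the usual bootstrap on the admissible constant.

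\textbf{Main obstacle.} The technical heart of the argument is pinning down the correct KBL inequality above: one needs a uniform bound on the Brascamp--Lieb constant associated to the datum formed by the orthogonal projections onto $N_{\xi_j'} S_j'$, depending only on $\nu$, $n$, $k$ (and not on $R$ or the $\mu_j$), together with a precise accounting of the $\mu_j$-polynomial dependence coming from the direction neighborhoods. This is where the full strength of the Kakeya--Brascamp--Lieb theory is required, and where the approach diverges from the bootstrap arguments used in \cite{Bejenaru2022}.
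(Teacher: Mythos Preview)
Your overall framework—wave packets, reduction to a Kakeya-type estimate, induction on scales—matches the paper's, but there is a genuine gap in how you propose to extract the factors $\mu_j^{m_j}$.

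You assert that ``invoking the corresponding KBL inequality extracts a factor $\prod_j \mu_j^{m_j}$ from the tube-incidence quantity,'' but no such extraction is available. The wave packets of $E_{S_j}f_j$ are \emph{tubes}, with $1$-dimensional core lines $N_{\xi}S_j$, and the relevant KBL inequality for tubes is simply multilinear Kakeya; its right-hand side is $\prod_j\big(\sum_{T_j}|c_{T_j}|\big)^{p_k}$, and by $L^2$-orthogonality $\sum_{T_j}\|f_{j,T_j}\|_2^2\lesssim\|f_j\|_2^2$ regardless of how few caps survive the support constraint. Restricting the tube directions to a $\mu_j$-neighbourhood of an $(n-1-m_j)$-dimensional set does not change this bound. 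If instead you intend to apply the KBL inequality for the datum built from the projections onto $N_{\xi_j'}S_j'$, then the admissible inputs are functions constant on \emph{slabs} with core planes parallel to $T_{\xi_j'}S_j'$ (which is $(n-1-m_j)$-dimensional). The tube-sums $\sum c_{T_j}\chi_{T_j}$ are not of this form: each tube is only $R^{1/2}$-wide in the $T_{\xi_j'}S_j'$-directions, not constant along them. Covering tubes by such slabs destroys the $R$ versus $R^{1/2}$ anisotropy and still produces no $\mu_j$ gain.

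The paper obtains the $\mu_j$ factors by a different mechanism. The KBL step is used \emph{only} to reduce the scale $R\mapsto R^{1/2}$, with no $\mu_j$ improvement there. The gain comes from a slice formula: once the running scale $R$ satisfies $\mu_j<R^{-1}$, one writes $|E_{S_j}f_j(x)|\leq\int_{[-\mu_j,\mu_j]^{m_j}}|E_{S_j'(\bdeta_j)}f_{j,\bdeta_j}(x)|\,\ud\bdeta_j$ and applies Cauchy--Schwarz in $\bdeta_j$ to produce $\mu_j^{m_j q_k/2}$. Because $q_k<1$ for $k\geq 4$, one cannot simply push the $\bdeta_j$-integral outside by Minkowski; instead the locally constant property at scale $R^{-1}$ removes the $\bdeta_j$-dependence of the operator, and a vector-valued (Marcinkiewicz--Zygmund) extension of the multilinear estimate on the lower-dimensional manifolds $S_j'$ handles the resulting $L^2_{\bdeta_j}$ expression. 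Running the induction also requires that the support condition $\supp f_j\subseteq\cN_{\mu_j}S_j'$ be \emph{preserved} from scale $R$ to scale $R^{1/2}$; the standard isotropic $R^{-1/2}$-cap decomposition you describe destroys this when $\mu_j<R^{-1/2}$, so the paper uses an anisotropic wave packet decomposition adapted to the geometry of $S_j'\subseteq S_j$. Both of these ingredients are missing from your proposal.
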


An earlier (and more restricted) version of this result appeared in \cite{Bejenaru2017}. We remark that it is possible to take $m_j = 0$ in Theorem~\ref{thm: Bejenaru}; in particular, if $m_1 = \dots = m_k = 0$, then we recover Theorem~\ref{thm: BCT}. Note that in this case the support condition is vacuous.

Theorem~\ref{thm: Bejenaru} has recently found two distinct applications:
\begin{itemize}
    \item In \cite{Bejenaru2022a}, Theorem~\ref{thm: Bejenaru} is combined with a variant of Guth's polynomial partitioning method \cite{Guth2016} to establish a sharp $(n-1)$-linear restriction estimate under a curvature hypothesis (see also \cite{Oh2023}).
    \item More recently, in \cite{BDH}, a special case of Theorem~\ref{thm: Bejenaru} (under a low regularity hypothesis) was used to prove $L^p \to L^q$ estimates for the maximal function associated to dilates of a helix in $\R^3$.
\end{itemize}  

One aim of this note is to give a new proof of Theorem~\ref{thm: Bejenaru}. For this, we follow the standard induction-on-scale framework of \cite{BBFL2018, BCT2006} which perhaps provides a more contextualised approach than that of \cite{Bejenaru2017, Bejenaru2022}. In particular, we clarify the relationship between \eqref{eq: local multlinear} and Kakeya-type inequalities. In \cite[p.1588]{Bejenaru2017} the author raises the question of whether \eqref{eq: local multlinear} has a multilinear Kakeya counterpart. Here we show that the corresponding geometric estimates are Kakeya--Brascamp--Lieb inequalities of the type studied in, for instance, \cite{BBFL2018, Zhang2018} and their discretised/regularised variants, as studied in \cite{Maldague2022, Zorin-Kranich2020}. Moreover, the new approach leads to our main result (Theorem~\ref{thm: nested}), which is a substantial generalisation of Theorem~\ref{thm: Bejenaru}, making full use of the Kakeya--Brascamp--Lieb theory. 




\subsection{Regularised Brascamp--Lieb inequalities}\label{subsec: reg BL} In order to state the main result, we briefly recall some elements of Brascamp--Lieb theory. For $2 \leq k \leq n$ and $1 \leq n_j \leq n$ for $1 \leq j \leq k$, a \textit{Brascamp--Lieb datum} is a pair $(\bL, \bp) := ((L_j)_{j=1}^k, (p_j)_{j=1}^k)$ where $L_j \colon \R^n \to \R^{n_j}$ are linear surjective maps and $p_j \in (0,1]$. We let $\BLreg{\bL}{\bp}$ denote the best constant $C > 0$ for which the inequality
    \begin{align}\label{eq: BLreg def}
        \int_{\R^n}\prod_{j=1}^k (f_j\circ L_j)^{p_j} \leq C\prod_{j = 1}^k\Big(\int_{\R^{n_j}}f_j \Big)^{p_j}
    \end{align}
holds for all non-negative functions $f_j \in L^1(\R^{n_j})$ that are constant on cubes in the unit cube lattice $\cQ^{n_j}:=[0,1)^{n_j}+\Z^{n_j}$. The inequalities \eqref{eq: BLreg def} are typically referred to as \textit{regularised} Brascamp--Lieb inequalities and were first introduced in \cite{BCCT2010}. Moreover, in \cite{BCCT2010} it was shown that\footnote{Here the notation $V \leq \R^n$ denotes that $V$ is a linear subspace of $\R^n$.}
\begin{equation}\label{eq:BLreg finite}
    \BLreg{\bL}{\bp} < \infty  \quad \Longleftrightarrow \quad  \dim(V) \leq \sum_{j = 1}^k p_j \dim(L_j V) \quad \text{for all $V \leq \R^n$.}
\end{equation}
 An advantage of regularised Brascamp--Lieb inequalities is that the characterisation \eqref{eq:BLreg finite} for the finiteness of $\BLreg{\bL}{\bp}$ does not involve the scaling condition $\sum_{j = 1}^k p_j n_j = n$, which is necessary in the standard Brascamp--Lieb theory~\cite{BCCT2008}. 




\subsection{Main result}\label{subsec: main result} Our main theorem generalises Theorem~\ref{thm: Bejenaru} in three directions. First, rather than just considering hypersurfaces $S_j$, we allow the $S_j$ to be of arbitrary dimension. Secondly, rather than considering a single submanifold $S_j' \subseteq S_j$  for each $1 \leq j \leq k$, we consider nested families. The nested setup naturally arises in recent polynomial partitioning approaches to multilinear restriction and Kakeya: see \cite{ GWZ, HRZ2022, HZ, Zahl2021}. Finally, rather than work with the transversality condition \eqref{eq: trans}, we formulate our transversality hypothesis in terms of regularised Brascamp--Lieb constants, as defined above. This Brascamp--Lieb formulation greatly relaxes the constraints on the permissible codimensions. 

We first explain the nested setup. Suppose $S_1$ is a $d_1$-dimensional submanifold of a $d_0$-dimensional submanifold $S_0 = \Sigma_0(U_0)$ of $\R^n$, as in \S\ref{subsec: background}. Recall that $S_1$ is also a submanifold of $\R^n$ with parametrisation $\Sigma_1 := \Sigma_0 \circ \gamma_1$ for some map $\gamma_1 \colon U_1 \to U_0$, where $U_1 \subseteq \R^{d_1}$. It therefore makes sense to  consider a $d_2$-dimensional submanifold $S_2$ of $S_1$, which itself is a submanifold of $\R^n$ with parameterisation $\Sigma_2 := \Sigma_1 \circ \gamma_2 = \Sigma_0 \circ \gamma_1 \circ \gamma_2$ for some map $\gamma_2 \colon U_2 \to U_1$, where $U_2 \subseteq \R^{d_2}$. Continuing in this way, we can construct a whole nested sequence of submanifolds. 

To describe the setup precisely,
we say $\cS = (S_0, (S_{\ell})_{\ell = 1}^r)$ is a \textit{nested family of submanifolds of $\R^n$} if $S_0 = \Sigma_0(U_0) \subseteq \R^n$ is a submanifold of $\R^n$ and $S_{\ell}$ is a submanifold of $S_{\ell - 1}$ for $1 \leq \ell \leq r$. Thus, if $d_\ell=\dim(S_\ell)$ for $1 \leq \ell \leq r$, then we have a family of open neighbourhoods $U_{\ell} \subseteq \R^{d_{\ell}}$ of the origin and parametrisations $\gamma_{\ell} \colon U_{\ell} \to U_{\ell-1}$ with $\gamma_{\ell}(U_{\ell})$ compactly contained in $U_{\ell-1}$ and $\gamma_{\ell}(0) = 0$ such that
\begin{equation}\label{eq: chain of param}
    S_{\ell} = \Sigma_{\ell}(U_{\ell}) \quad \textrm{where} \quad \Sigma_{\ell} := \Sigma_0 \circ \sigma_{\ell} \quad \textrm{for} \quad \sigma_{\ell} := \gamma_1 \circ \cdots \circ \gamma_{\ell}.
\end{equation}
In this case, each set $M_\ell:=\sigma_\ell(U_\ell)$ is a $d_\ell$-dimensional submanifold of $\R^{d_0}$ and $S_\ell=\Sigma_0(M_\ell)$. Given $\bmu = (\mu_{\ell})_{\ell = 1}^r \subset (0,1]$, we shall say $f \in L^2(S)$ is \textit{supported in $\cN_{\bmu} \cS$}, or $\supp f \subseteq \cN_{\bmu}\cS$, if $\supp f \subseteq \cN_{\mu_{\ell}} S_{\ell}$ for all $1 \leq \ell \leq r$, in the sense defined above: that is, $\supp f \subseteq \cN_{\mu_\ell} M_\ell$, where $\cN_{\mu_{\ell}} M_\ell \subseteq \R^d$ is the (open Euclidean) $\mu_\ell$-neighbourhood of $M_\ell$. We also assume that $d_r < \cdots < d_1 < d_0$ since this is the configuration of interest for our main theorem; in particular, $r \leq n$.

\begin{definition}\label{def: nested} Fix $2 \leq k \leq n$ and suppose $\sE = ((\cS_j)_{j=1}^k, \bq)$ where the $\cS_j = (S_{j,0}, (S_{j,\ell})_{\ell = 1}^{r_j})$ are nested families of submanifolds and $\bq = (q_j)_{j=1}^k \subset (0,2]$ is a sequence of exponents. Define the Brascamp--Lieb datum
\begin{equation}\label{eq: map ensemble def}
  (\bL(\sE),\bp) = ((L_j(\sE))_{j=1}^k, (p_j)_{j=1}^k)  
\end{equation}
by setting
\begin{equation*}
  L_j(\sE) := \partial_s \Sigma_{j,r_j}(0)^{\top} \colon \R^n \to \R^{d_{j,r_j}} \quad \textrm{and} \quad p_j := q_j/2 \quad \textrm{for $1 \leq j \leq k$,}
\end{equation*}
where $d_{j,r_j} := \dim(S_{j,r_j})$ and $S_{j,r_j} = \Sigma_{j,r_j}(U_{j,r_j})$, with $\Sigma_{j,r_j}$ parametrisations constructed as in \eqref{eq: chain of param}.

\begin{enumerate}[a)]
    \item We say $\sE$ is a \textit{transverse ensemble} in $\R^n$ if 
\begin{equation*}
    \BLreg{\bL(\sE)}{\bp} < \infty.
\end{equation*}
    \item We say $\sE$ has \textit{codimension} $ (m(j,\ell))_{j, \ell}$, where $m(j,\ell) := \codim{S_{j,\ell}}{S_{j,\ell-1}}$ for $1 \leq \ell \leq r_j$ and $1 \leq j \leq k$.
    \item Finally, we say $(\bmu_j)_{j=1}^k$ where $\bmu_j =  (\mu_{j,\ell})_{\ell = 1}^{r_j} \subset (0,1]$ is a \textit{compatible family of scales} for $\sE$ if $0 < \mu_{j,1} \leq \dots \leq \mu_{j, r_j} < 1$ for $1 \leq j \leq k$.
\end{enumerate} 
\end{definition}

With the above definitions, the main theorem reads as follows.

\begin{theorem}\label{thm: nested} Let $2 \leq k \leq n$ and $\sE = ((\cS_j)_{j=1}^k, \bq)$ be a transverse ensemble in $\R^n$ of codimension $(m(j,\ell))_{j, \ell}$, with $\cS_j = (S_j, (S_{j,\ell})_{\ell = 1}^{r_j})$. There exists $\rho_{\sE} > 0$ such that for $\max_j\rho(E_{S_j}) < \rho_{\sE}$ the following holds. For all $\varepsilon >0$, there exists a constant $C_{\varepsilon} \geq 1$ such that
\begin{equation*}
\int_{Q_R}\prod_{j=1}^k|E_{S_j}f_j|^{q_j} \leq C_{\varepsilon} R^{\varepsilon} \prod_{j=1}^k\prod_{\ell = 1}^{r_j}\mu_{j,\ell}^{m(j,\ell)q_j/2} \prod_{j=1}^k\|f_j\|_{L^2(S_j)}^{q_j},
    \end{equation*}
    for all $R \geq 1$ and all $f_j \in L^2(S_j)$ satisfying $\supp f_j \subseteq \cN_{\bmu_j}\cS_j$ for $(\bmu_j)_{j=1}^k$ a compatible family of scales for $\mathscr{E}$.
\end{theorem}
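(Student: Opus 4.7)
My plan is to follow the induction-on-scales framework of Bennett--Carbery--Tao and Bennett--Bez--Flock--Lee, reducing the problem to a geometric Kakeya--Brascamp--Lieb inequality whose constant is governed by $\BLreg{\bL(\sE)}{\bp}$ and which absorbs the localisation gains $\prod_{j,\ell} \mu_{j,\ell}^{m(j,\ell) q_j/2}$. Concretely, let $A_{\sE}(R; (\bmu_j)_{j=1}^k)$ denote the best constant in the estimate at scale $R$; the strategy is to derive a self-improving inequality of the form
\begin{equation*}
 A_{\sE}(R; (\bmu_j)_{j=1}^k) \lesssim_\delta R^{C\delta}\, [A_{\sE}(R/K^2; (\bmu_j)_{j=1}^k)]^{1-\theta} \cdot \bigl(\mathrm{Kak}(R; \sE, (\bmu_j))\bigr)^{\theta},
\end{equation*}
with $K = R^{\delta}$, and then iterate to convert the $R^{\delta}$ losses into an acceptable $R^{\varepsilon}$ factor, provided the Kakeya-type input contributes no power of $R$.

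The geometric input. After a standard wave-packet decomposition at scale $R$ and exploiting the locally constant property on balls of radius $R^{1/2}$, the estimate reduces to bounding
\begin{equation*}
\int_{Q_R} \prod_{j=1}^k \Bigl( \sum_{T \in \mathbb{T}_j} c_{j,T} \mathbbm{1}_T \Bigr)^{q_j} \ud x,
\end{equation*}
where each $\mathbb{T}_j$ is a family of $R^{1/2}\times\cdots\times R^{1/2}\times R$ tubes oriented along vectors normal to $S_j$ at the associated frequency. The support hypothesis $\supp f_j \subseteq \cN_{\bmu_j}\cS_j$ forces the tube directions to concentrate: at the coarsest level they cluster in an $O(\mu_{j,1})$-neighbourhood of the normal bundle $NS_{j,1}$, and recursively at each nested stage $1 \leq \ell \leq r_j$ the subcluster of directions lies within $O(\mu_{j,\ell})$ of $NS_{j,\ell}$. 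The corresponding tubes thus organise into nested slabs of thickness $R^{1/2} + R\,\mu_{j,\ell}$ in the $m(j,\ell)$ directions transverse to $NS_{j,\ell}$ inside $NS_{j,\ell-1}$. I would apply a regularised/discretised Kakeya--Brascamp--Lieb inequality in the spirit of \cite{BBFL2018, Zhang2018, Maldague2022, Zorin-Kranich2020}, applied at the unit scale to the datum $(\bL(\sE), \bp)$, to obtain the bound
\begin{equation*}
 \mathrm{Kak}(R; \sE, (\bmu_j)) \lesssim \BLreg{\bL(\sE)}{\bp} \cdot \prod_{j=1}^k \prod_{\ell = 1}^{r_j} \mu_{j,\ell}^{m(j,\ell) q_j/2} \cdot R^{O(\delta)}\prod_{j=1}^k |\mathbb{T}_j|^{q_j/2},
\end{equation*}
where the $\mu$-factors arise precisely because the tubes occupy only a $\mu_{j,\ell}^{m(j,\ell)}$-fraction of the isotropic angular space at each nested level, matching the orthogonal direction counts used to compute the regularised Brascamp--Lieb functional of $(\bL(\sE),\bp)$.

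The induction closure then proceeds by a now-standard argument: one partitions $Q_R$ into $K$-cubes, applies the hypothesis at scale $R/K^2$ to each rescaled localisation (via a parabolic rescaling that preserves the nested submanifold structure and the compatible family of scales), and combines with the Kakeya estimate above. The transversality assumption $\BLreg{\bL(\sE)}{\bp} < \infty$, together with its stability under small perturbations of the linear maps (a standard fact, see for instance \cite{BBFL2018}), guarantees that the Brascamp--Lieb constants for the parametrised subdata vary continuously as the base point ranges over a sufficiently small neighbourhood, which is where the hypothesis $\max_j \rho(E_{S_j}) < \rho_{\sE}$ comes in.

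The main obstacle, in my view, is the geometric Kakeya--Brascamp--Lieb step, specifically establishing the correct gain $\prod_{j,\ell} \mu_{j,\ell}^{m(j,\ell)q_j/2}$ in the presence of a fully nested family. For a single submanifold $S_j'$ this reduces to applying the existing regularised inequality at a rescaled level; but for $r_j > 1$ the tubes are organised in a hierarchy of slabs of multiple widths, and one must iterate the Brascamp--Lieb bound (or telescope across the intermediate scales $\mu_{j,1} \leq \cdots \leq \mu_{j,r_j}$) while keeping control of the tube multiplicities. I expect this to work because the compatibility condition on $(\bmu_j)$ ensures the slab widths are monotone, so the nested geometric packing at each level decouples cleanly, but verifying that the net factor agrees with the claimed product is the delicate combinatorial/volumetric bookkeeping at the heart of the argument.
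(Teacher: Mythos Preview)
Your overall architecture---induction-on-scales reducing to a Kakeya--Brascamp--Lieb input---matches the paper's, but the mechanism you propose for extracting the factor $\prod_{j,\ell}\mu_{j,\ell}^{m(j,\ell)q_j/2}$ is not correct, and this is a genuine gap rather than missing detail.

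You expect the $\mu$-gain to come directly from the Kakeya step, on the grounds that angular concentration of the tubes to a $\mu_{j,\ell}^{m(j,\ell)}$-fraction of direction space should translate into a gain in the Kakeya--Brascamp--Lieb constant. It does not: in multilinear Kakeya, if tubes in a family are angularly concentrated they overlap \emph{more}, not less, and the Brascamp--Lieb datum (hence the constant) is unchanged by restricting the angular support. Moreover, if the Kakeya step truly carried the full $\mu$-factor, then iterating your recursion would produce that factor to an arbitrarily high power. In the paper the Kakeya constant $\fK(R)$ carries \emph{no} $\mu$-factor at all; the recursion is $\fR(R;\bmu)\lesssim_\varepsilon R^\varepsilon\,\fR(R^{1/2};\bttmu)\,\fK(R)$ where the $\mu$-factors are already absorbed into the definition of $\fR$.

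The paper's mechanism for the $\mu$-gain is different and rests on two ideas absent from your proposal. First, the wave packets are \emph{anisotropic}, adapted to the nested geometry: in the directions transverse to $S_{j,\ell}$ with $\mu_{j,\ell}\leq R^{-1/2}$ they have width $\sim\mu_{j,\ell}$ rather than $R^{-1/2}$. This ensures that the decomposed pieces $f_{j,Q}$ retain the support condition $\supp f_{j,Q}\subseteq\cN_{C\bmu_j}\cS_j$, so that the induction hypothesis at scale $R^{1/2}$ can be applied \emph{with the $\mu$-localisation already in place}. Second, the spatial concentration is to \emph{slabs} whose core planes are tangent to the intermediate submanifold $S_{j,\ell_\star}$ determined by where $R^{-1/2}$ falls among the $\mu_{j,\ell}$; the relevant Kakeya--Brascamp--Lieb datum is that of the subensemble $\sF$ with $\widetilde\cS_j=(S_j,(S_{j,\ell})_{\ell\leq\ell_\star})$, and Proposition~\ref{prop: BL ensemble} guarantees $\BLreg{\bL(\sF)}{\bp}<\infty$. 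The passage between the extension operators $E_{S_j}$ and $E_{S_{j,\ell_\star}}$---which is where the $\mu$-factors for $\ell\leq\ell_\star$ actually appear, via Cauchy--Schwarz on the slice formula---is handled by the uncertainty-principle reduction (Lemma~\ref{lem: uncer red}), which requires a multilinear Marcinkiewicz--Zygmund theorem because $q_j<1$ prevents a direct Minkowski argument. Finally, there is no parabolic rescaling here: the manifolds are general, and the reduction to smaller scale is simply translation to $R^{1/2}$-cubes.
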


Recall that we are tacitly working with a fixed choice of parametrisations for the $S_j$ and $S_{j,\ell}$, and the constant $C_{\varepsilon}$ will depend (amongst other things) on this choice. If $\Sigma_j \colon U_j \to \R^n$ and $\Sigma_{j,r_j} \colon U_{j,r_j} \to \R^n$ are our fixed parametrisations for $S_j$ and $S_{j,r_j}$ as submanifolds of $\R^n$ then, as discussed in \eqref{eq: chain of param} and \eqref{eq: compatible param}, we always assume $\Sigma_{j,r_j} := \Sigma_j \circ \gamma_{j,1} \circ \cdots \circ \gamma_{j,r_j}$ for intermediate parametrisations $\gamma_{j,\ell}$, and $\Sigma_{j,r_j}(0) = \Sigma_{j}(0)$.

\begin{remark}\label{rmks: main} To contextualise the result, we make the following remarks.\medskip

\noindent a) Let $2 \leq k < n$. A prototypical application of our theorem is a $k$-linear estimate for the $n$-dimensional paraboloid in which one of the functions is localised to an anisotropic box. More precisely, let $U :=(-1,1)^{n-1}$  and $S:=\Sigma(U)$, where $\Sigma(u) :=(u,|u|^2)$. Let $0<\delta \ll 1$ and $B_\delta \subseteq \R^{n-1}$ be the $1 \times \cdots \times 1 \times  \delta \times \delta^2 \times \cdots \times \delta^{n-k}$ box centered at the origin pointing along the directions of the standard basis in $\R^{n-1}$. Let $u_1, \dots, u_{k-1}\in U$ satisfy 
\begin{equation}\label{eq: transv hyp example}
\Big|\bigwedge_{j=1}^{k-1}\frac{(2u_j,-1)}{|(2u_j,-1)|} \wedge  e_{k} \wedge \cdots \wedge e_{n}\Big| \geq 1/2.    
\end{equation}
If $U_1, \dots U_{k-1}$ are sufficiently small neighbourhoods of the points $u_1, \dots, u_{k-1}$  and $U_k$ is a sufficiently small neighbourhood of the origin, then for all $\varepsilon >0$ there exists a constant $C_{\varepsilon} \geq 1$ such that 
\begin{equation}\label{eq: example}
\int_{Q_R}\prod_{j=1}^k|E_{S}f_j|^{\frac{2}{k-1}} \leq C_{\varepsilon} R^{\varepsilon}  \delta^{\frac{(n-k)(n-k+1)}{2(k-1)}} \prod_{j=1}^{k}\|f_j\|_{L^2(U_j)}^{\frac{2}{k-1}}
\end{equation}
for all $R \geq 1$ and all $f_j \in L^2(U_j)$ for $1 \leq j \leq k$ with $\supp f_k \subseteq B_\delta$.

To see how this follows from Theorem \ref{thm: nested}, define $U_{k, \ell} :=(-1,1)^{n-1-\ell}$ and $\gamma_{k,\ell}: U_{k, \ell} \to U_{k, \ell-1}$ by $\gamma_{k,\ell}(s) :=(s,0)$ for $1 \leq \ell \leq n-k$, with $U_{k,0}:=U$. It is then clear that $\supp f_k \subseteq \cN_{\bmu_k} \cS_k$, where $\bmu_k = (\delta^{n-k+1-\ell})_{\ell=1}^{n-k}$ and $\cS_k=(S, (S_{k,\ell})_{\ell=1}^{n-k})$ for $S_{k,\ell}=\Sigma_{k,\ell}(U_{k,\ell})$ and $\Sigma_{k,\ell}:=\Sigma \circ \gamma_{k,1} \circ \cdots \circ \gamma_{k,\ell}$. For all $1 \leq j \leq k-1$, one can centre $U_j$ at the origin by a change of variables, giving rise to a neighbourhood $U_j^*$ and an extension operator $E_{S_j}$ with $S_j=\Sigma_j(U_j^*)$ and $\Sigma_j(u)=(u+u_j, |u+u_j|^2)$. Considering the ensemble $\mathscr{E}=((\cS_j)_{j=1}^{k},\bq_k)$, where $\cS_j=(S_j)$ for $1 \leq j \leq k-1$ and $\bq_k=(q_k)_{j=1}^k$ for $q_k=\frac{2}{k-1}$, we have that  $\ker L_j(\mathscr{E})=(2 u_j,-1)^\top$ for all $1 \leq j \leq k-1$ and $\ker L_k(\mathscr{E})=\mathrm{span}\{e_k, \dots, e_n\}$. One can show that \eqref{eq: transv hyp example} implies that the condition \eqref{eq:BLreg finite} holds with $p_j=\frac{1}{k-1}$ (see, for instance, Corollary \ref{cor: BL to Bejenaru}), and thus $\mathscr{E}$ is a transverse ensemble. Consequently, the claimed multilinear inequality \eqref{eq: example} follows from Theorem \ref{thm: nested}.

\medskip

\noindent b) Taking $\cS_j = (S_j, S_j')$  for $S_j \subset \R^n$ a hypersurface and $S_j'$ a submanifold of $S_j$, for $1 \leq j \leq k$, and $\bp_k = (p_k)_{j=1}^k$ for $p_k = \tfrac{1}{k-1}$, we recover the localised multilinear estimates of Theorem~\ref{thm: Bejenaru}. Indeed, this follows from the observation
\begin{equation*}
  \Big|\bigwedge_{j = 1}^k N_{\xi_j'}S_j'\Big| > 0 \quad \iff \quad   \BLreg{\bL(\mathscr{E})}{\bp_k} < \infty,
\end{equation*}    
where the $\xi_j'$ are as defined in \eqref{eq: trans}: see \S\ref{sec:KBL} for details.\medskip

\noindent c) Suppose $r_j = 0$ for all $1 \leq j \leq k$, so that $\cS_j = (S_j)$ and there is no nesting. If we further suppose that the scaling condition $\sum_{j=1}^k p_j \dim(S_j) = n$ is satisfied, then Theorem~\ref{thm: nested} amounts to the Restriction--Brascamp--Lieb estimates of \cite[Theorem 1.3]{BBFL2018}. If we drop the scaling condition, then the corresponding estimates are not documented in the literature, but can be deduced from the Kakeya--Brascamp--Lieb estimates in \cite{Maldague2022, Zorin-Kranich2020} (see Theorem~\ref{thm: Mald} below) by the argument in \cite{BBFL2018, BCT2006}.\medskip

\noindent d) As discussed in \S\ref{sec: BL ensemble}, the hypothesis $\BLreg{\bL(\sE)}{\bp} <\infty$ of Theorem~\ref{thm: nested} implies 
\begin{equation}\label{eq: BBFL condition}
   \BLreg{(\partial_s \Sigma_j (0)^\top)_{j=1}^k}{\bp}<\infty.
\end{equation}
By the argument of \cite{BBFL2018} (\textit{cf.} item c) above), the condition \eqref{eq: BBFL condition} implies the multilinear estimates
\begin{equation*}
\int_{Q_R}\prod_{j=1}^k|E_{S_j}f_j|^{q_j} \leq C_{\varepsilon} R^{\varepsilon} \prod_{j=1}^k\|f_j\|_{L^2(S_j)}^{q_j}.
\end{equation*}
The relevance of our theorem is therefore the improvement given by the factor $\prod_{j=1}^k \prod_{\ell=1}^{r_j} \mu_{j,\ell}^{m(j,\ell)q_j/2}$ under the hypotheses $\supp f_j \subseteq \cN_{\bmu_j} \cS_j$.
\end{remark}




\subsection{Proof sketch} We describe the main ideas in the simpler setting of Theorem~\ref{thm: Bejenaru}. We remark that, due to the local nature of the desired estimates, we may assume without loss of generality that $\max_j \mu_j < \mu_{\sE}$ for some $\mu_{\sE} > 0$ depending only on $\sE$. Under the hypotheses of Theorem~\ref{thm: Bejenaru}, it is possible to bound
\begin{equation}\label{eq: slice bound}
    |E_{S_j} f_j(x)| \leq \int_{[-\mu_j, \mu_j]^m} |E_{S_j'(\bdeta_j)} f_{j, \bdeta_j}(x)|\,\ud \bdeta_j
\end{equation}
where each $E_{S_j'(\bdeta_j)}$ is an extension operator associated to a submanifold $S_j'(\bdeta_j)$ of $S_j$ of codimension $m_j$: see \S\ref{subsec: slice formula} for details. Moreover, \eqref{eq: trans} and continuity imply that, for $\mu_j$ sufficiently small, the submanifolds $(S_j'(\bdeta_j))_{\ell = 1}^k$ satisfy certain transversality conditions uniformly over all choices of $\bdeta_j \in [-\mu_j, \mu_j]^{m_j}$. For each index $j$, we plug the bound \eqref{eq: slice bound} into the left-hand side of \eqref{eq: local multlinear}. If one were then able to switch the order of the $L^{q_k}_x$-norm and the $L^1_{\bdeta_j}$-norms, for instance using Minkowski's integral inequality, then the multilinear restriction estimates of \cite{BBFL2018} (or certain regularised variants) would apply directly to give, together with an application of the Cauchy--Schwarz inequality, Theorem~\ref{thm: Bejenaru}. However, Minkowski's integral inequality requires $q_k \geq 1$, which is only valid for $k = 2$ or $k = 3$. See \cite[Appendix A]{BDH} for an example of this strategy in the $k=3$ case. 

A similar issue with the exponent $q_k < 1$ arises when passing between the equivalent `neighbourhood' and `extension' formulations of the Bennett--Carbery--Tao restriction theorem in \cite{BCT2006}. One strategy to deal with this is to use a vector-valued extension of the multilinear restriction estimates, derived from the Marcinkiewicz--Zygmund theorem: see \cite[Appendix]{Tao2020} for details. The present situation is somewhat more complicated than that in \cite{BCT2006, Tao2020}, since the operators $E_{S_j'(\bdeta_j)}$ typically have a non-trivial dependence on $\bdeta_j$ (in the context of \cite{BCT2006, Tao2020}, the analogous extension operators are independent of $\bdeta_j$). This means that the Marcinkiewicz--Zygmund theorem cannot be applied directly to obtain the desired vector-valued extension. 

The above strategy does work, however, when $\mu_j < R^{-1}$. Since the $x$-integration is localised to $Q_R$, in this case uncertainty principle considerations ensure that $E_{S_j'(\bdeta_j)}f_{j,\bdeta_j}$ is essentially constant in $\bdeta_j \in [-\mu_j, \mu_j]^{m_j}$. This allows for an application of the Marcinkiewicz--Zygmund theorem as in \cite[Appendix]{Tao2020}: see Lemma~\ref{lem: uncer red} for details. On the other hand, when $\mu_j \geq R^{-1}$ it is possible to apply Kakeya--Brascamp--Lieb inequalities to reduce the scale $R$, following an induction-on-scale procedure similar to that used in \cite{BCT2006, BBFL2018} (but adapted to take into account the localisation of the supports of the $f_j$). Combining these two observations, one may formulate a refined induction-on-scale procedure, which takes into account the relative sizes of the $\mu_j$ and $R^{-1}$: see Lemma~\ref{lem: recursive step}. As in \cite{BBFL2018, BCT2006}, the induction-on-scale procedure yields the desired estimate.




\subsection{Notational conventions} We shall say a constant is \textit{admissible} if it depends either only on a choice of dimension $n$ or a choice of ensemble $\sE$. In particular, any admissible constant is independent of parameters such as $R \geq 1$ or the $\mu_{j,\ell} > 0$. We will frequently let $C$ denote a choice of admissible constant, whose value may change from line to line. Given a list of objects $L$ and real numbers $A$, $B \geq 0$, we write $A \lesssim_L B$ or $B \gtrsim_L A$ to indicate $A \leq C_L B$ for some constant $C_L$ which depends only items in the list $L$ and either a choice of dimension $n$ or a choice of ensemble $\sE$. We write $A \sim_L B$ to indicate $A \lesssim_L B$ and $B \lesssim_L A$. Given $d \in \N$ we let $I_d$ denote the $d \times d$ identity matrix.




\section{Kakeya--Brascamp--Lieb theory}\label{sec:KBL}




\subsection{Regularised Brascamp--Lieb revisited}
Let $2 \leq k \leq n$, $1 \leq n_j \leq n$ for $1 \leq j \leq k$ and $(\bL, \bp)$ a Brascamp--Lieb datum as in \S\ref{subsec: reg BL}. Given $R \geq 1$ and $0 \leq \lambda<R$ dyadic, we let $\BLscale{\bL}{\bp}{\lambda}{R}$ denote the best constant $C>0$ for which the associated generalised Brascamp--Lieb inequality
    \begin{align}\label{eq: BLdef}
        \int_{Q_R}\prod_{j=1}^k (f_j\circ L_j)^{p_j} \leq C\prod_{j=1}^k\Big(\int_{\R^{n_j}}f_j\Big)^{p_j}
    \end{align}
holds for all non-negative functions $f_j \in L^1(\R^{n_j})$ that are constant on cubes of sidelength $\lambda$ in the fixed dyadic grid $[0,\lambda)^{n_j} + \lambda \Z^{n_j}$. Inequalities of this form were first introduced in \cite{BCCT2008} and can be seen as a generalisation of the classical scale-invariant Brascamp--Lieb inequalities, which correspond to taking $\lambda \to 0$, $R \to \infty$. Regarding the latter, we let $\BL{\bL}{\bp}$ denote the classical Brascamp--Lieb constant, which is the best constant $C>0$ for which \eqref{eq: BLdef} holds for all $R \geq 1$ and  all non-negative functions $f_j \in L^1(\R^{n_j})$ (without any local constancy hypothesis). 

The case $\lambda=1$ of the above is often referred to as \textit{regularised}.\footnote{Note that if $0 < \lambda \leq R < \infty$, one can always reduce to the case $\lambda=1$ by the scaling relation
\begin{equation*}
    \BLscale{\bL}{\bp}{\lambda}{R}=\lambda^{n-\sum_{j=1}^k p_jn_j}\BLscale{\bL}{\bp}{1}{R\lambda^{-1}}.
\end{equation*}}
Maldague \cite[Theorem 1.1]{Maldague2022} recently showed that
\begin{equation}\label{eq:charact BL 1R}
    \BLscale{\bL}{\bp}{1}{R} \sim_{\bL,\bp} R^{\alpha(\bL,\bp)}
\quad
\textrm{for}
\quad
\alpha(\bL,\bp):=\sup_{V\leq \R^n}\Big(\dim(V)-\sum_{j=1}^k p_j\dim(L_jV)\Big).
\end{equation}
Note that the constant $\BLreg{\bL}{\bp}$ introduced in \S\ref{subsec: reg BL} satisfies 
\begin{equation*}
   \BLreg{\bL}{\bp}=\sup_{R>1} \BLscale{\bL}{\bp}{1}{R}; 
\end{equation*}
 in particular, one can recover \eqref{eq:BLreg finite} from the characterisation \eqref{eq:charact BL 1R} for $\BLscale{\bL}{\bp}{1}{R}$. We remark that the classical Brascamp--Lieb constant $\BL{\bL}{\bp}$ is finite if and only if $\alpha(\bL, \bp)=0$ and $\sum_{j = 1}^k p_j n_j = n$: see \cite{BCCT2008}. In this case, we in fact have
 \begin{equation*}
     \BLreg{\bL}{\bp} = \BL{\bL}{\bp}
 \end{equation*}
 as a consequence of a simple limiting argument.



\subsection{Regularised Brascamp--Lieb  and transversality conditions} Here we compare the hypotheses of Theorem~\ref{thm: nested} and Theorem~\ref{thm: Bejenaru} and, in particular, address the comments made in Remark~\ref{rmks: main} b).

\begin{proposition}\label{prop: reg to normal}
    Let $\bp_k = (p_k)_{j=1}^k$ where $p_k := q_k/2=\frac{1}{k-1}$ and suppose that $L_j \colon \R^n\to\R^{n_j}$, $1\leq j\leq k$, are linear surjections. Then the following are equivalent:
    \begin{enumerate}[a)]
    \item $\BLreg{\bL}{\bp_k} <\infty$;
    \item $\displaystyle \Big|\bigwedge_{j=1}^k \ker(L_j)\Big|>0$.
\end{enumerate}
\end{proposition}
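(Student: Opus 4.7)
The plan is to reduce the regularised Brascamp--Lieb condition to a clean statement about the kernels $K_j := \ker(L_j)$, and then recognise that statement as equivalent to the kernels being in direct sum.

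First, I would invoke the finiteness criterion \eqref{eq:BLreg finite} to rewrite condition (a) as
\begin{equation*}
    \dim(V) \leq \tfrac{1}{k-1}\sum_{j=1}^k \dim(L_j V) \qquad \text{for all } V \leq \R^n.
\end{equation*}
Using the rank--nullity relation $\dim(L_j V) = \dim(V) - \dim(V \cap K_j)$ and simplifying, this condition is equivalent to
\begin{equation}\label{eq:kernel-cond}
    \sum_{j=1}^k \dim(V \cap K_j) \leq \dim(V) \qquad \text{for all } V \leq \R^n.
\end{equation}
This is the pivotal reformulation, and it is special to the exponents $p_j = 1/(k-1)$.

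Next, I would recall that (b), i.e.\ $|\bigwedge_{j=1}^k K_j| > 0$, is equivalent to the sum $K_1 + \cdots + K_k$ being a direct sum, which in turn is equivalent to $\dim(K_1 + \cdots + K_k) = \sum_{j=1}^k \dim(K_j)$ (and implicitly forces this sum to be at most $n$). With this reformulation in hand, the equivalence with \eqref{eq:kernel-cond} is a short linear algebra argument:
\begin{itemize}
\item For \emph{(b) $\Rightarrow$ (a)}: if the $K_j$ are in direct sum, then so are the subspaces $V \cap K_j \subseteq K_j$, and hence $\sum_j \dim(V \cap K_j) = \dim\bigl(\sum_j (V \cap K_j)\bigr) \leq \dim V$.
\item For \emph{(a) $\Rightarrow$ (b)}: specialise \eqref{eq:kernel-cond} to $V := K_1 + \cdots + K_k$, noting that $V \cap K_j = K_j$, to obtain $\sum_j \dim K_j \leq \dim(K_1 + \cdots + K_k)$; the reverse inequality always holds, so equality holds and the kernels are in direct sum.
\end{itemize}

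I do not anticipate a serious obstacle: the proof is a direct algebraic consequence of \eqref{eq:BLreg finite} together with the identity $\dim(L_j V) = \dim V - \dim(V \cap K_j)$. The only point worth emphasising is that the structure of the exponents $p_j = 1/(k-1)$ is exactly what causes the $\dim(V)$ terms to combine into a single $\dim(V)$ on the right-hand side of \eqref{eq:kernel-cond}; for more general exponents the BL condition does \emph{not} reduce so cleanly to a direct sum statement, which is precisely the reason the main theorem (Theorem~\ref{thm: nested}) must be stated in the more flexible Brascamp--Lieb language rather than in terms of wedge products of normal spaces.
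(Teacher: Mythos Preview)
Your proposal is correct and follows essentially the same approach as the paper: both use rank--nullity to rewrite the BL condition in terms of $\dim(V\cap K_j)$, test the direction (a) $\Rightarrow$ (b) on the subspace $V=K_1+\cdots+K_k$, and handle (b) $\Rightarrow$ (a) via the observation that the $V\cap K_j$ inherit the direct-sum property. The only cosmetic differences are that you first isolate the reformulation \eqref{eq:kernel-cond} as a standalone equivalent of (a), and you argue (a) $\Rightarrow$ (b) directly rather than by contrapositive as the paper does.
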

\begin{proof}
For $V \leq \R^n$, we first note that
\begin{equation}\label{eq: rank-nullity}
    \dim(L_j(V))=\dim(V)-\dim(V \cap \ker(L_j)),
\end{equation}
 by the rank--nullity theorem applied to the restriction $L_j |_V$, $1 \leq j \leq k$. We define the subspace $W:=\ker(L_1)+ \cdots +\ker(L_k)$. \medskip

\noindent \textit{a) $\Rightarrow$ b)}. Assume b) fails. Applying \eqref{eq: rank-nullity} to $W$ as defined above, we deduce that
\begin{equation}\label{eq: rank-nullity 2}
    \dim(L_j(W))=\dim(W)-\dim(\ker(L_j)),
\end{equation}
since $\ker (L_j) \leq W$ for all $1 \leq j \leq k$. The failure of b) implies that $\dim(W)<\sum_{j=1}^k\dim(\ker(L_j))$ and applying this to \eqref{eq: rank-nullity 2} yields
\begin{equation*}
    \sum_{j=1}^k\dim(L_j (W))=k\dim(W)-\sum_{j=1}^k\dim(\ker(L_j))<(k-1)\dim(W).
\end{equation*}
This means that $W$ violates the finiteness characterisation  \eqref{eq:BLreg finite} for the datum $(\bL,\bp_k)$, implying that a) fails.\medskip

\noindent \textit{b) $\Rightarrow$ a)}. Assume that b) holds, so that $W=\bigoplus_{j=1}^k\ker(L_j)$. Given $V\leq\R^n$, we have $\bigoplus_{j=1}^k (V\cap\ker(L_j)) \leq V\cap W$ and therefore
\begin{equation*}
\sum_{j=1}^k\dim(V\cap\ker(L_j))=\dim\bigg(\bigoplus_{j=1}^k V\cap\ker(L_j)\bigg)\leq\dim(V\cap W).
\end{equation*} 
Combining the above inequality with \eqref{eq: rank-nullity}, we obtain
\begin{align*}
    \sum_{j=1}^k\dim(L_jV)&=\sum_{j=1}^k(\dim(V)-\dim(V\cap\ker(L_j))\\
    &\geq k\dim(V)-\dim(V\cap W)\\
    &\geq (k-1)\dim(V).
\end{align*}
Since $V \leq \R^n$ was arbitrary, the characterisation \eqref{eq:BLreg finite} implies that a) holds.
\end{proof}

\begin{remark}\label{rem: qual to quant}
    Suppose the maps $L_j$ are orthogonal projections. By invoking \cite[Proposition 1.2]{BB2010}, one may deduce that
    \begin{equation*}
        \BLreg{\bL}{\bp_k}\sim \Big|\bigwedge_{j=1}^k\ker(L_j)\Big|^{-\frac{1}{k-1}},
    \end{equation*}
    which is a quantitative variant of Proposition \ref{prop: reg to normal}. Since in this paper we only need finiteness of such transversality quantites, and not uniform boundedness, we leave the proof of this more quantitative result to the interested reader.
    
    
    
\end{remark}
\begin{corollary}\label{cor: BL to Bejenaru}
Let $2 \leq k \leq n$, $\sE = ((S_j, S_j')_{j =1}^k, \bq_k)$ be a transverse ensemble in $\R^n$. Then 
\begin{equation}\label{eq: BL to Bejenaru}
        \BLreg{\bL(\sE)}{\bp_k}<\infty \quad \Longleftrightarrow \quad  \Big|\bigwedge_{j = 1}^k N_{\xi_j'}S_j'\Big| >0.
    \end{equation}
    Here the $\xi_j'$ are as defined in \eqref{eq: trans}. In particular, Theorem~\ref{thm: nested} implies Theorem~\ref{thm: Bejenaru}.
\end{corollary}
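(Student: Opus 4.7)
The plan is to reduce the corollary to Proposition~\ref{prop: reg to normal} by identifying the kernel of each map $L_j(\sE)$ with the normal space $N_{\xi_j'} S_j'$.

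First I would unravel the definitions. The ensemble $\sE = ((S_j, S_j')_{j =1}^k, \bq_k)$ fits into the nested framework of Theorem~\ref{thm: nested} by setting $\cS_j = (S_j, S_j')$, so that $r_j = 1$ and $S_{j,1} = S_j'$. By Definition~\ref{def: nested}, the associated Brascamp--Lieb datum is then $\bL(\sE) = (\partial_s \Sigma_j'(0)^\top)_{j=1}^k$ with $L_j(\sE) \colon \R^n \to \R^{d_j'}$, where $d_j' := \dim S_j'$ and $\Sigma_j' := \Sigma_j \circ \gamma_j$ is the parametrisation of $S_j'$ as a submanifold of $\R^n$ compatible with $\Sigma_j$.

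Second, I would compute $\ker L_j(\sE)$. Since $\Sigma_j'$ is a regular parametrisation of $S_j'$ (it is the composition of the regular maps $\Sigma_j$ and $\gamma_j$), the differential $\partial_s \Sigma_j'(0) \colon \R^{d_j'} \to \R^n$ is injective and has image equal to $T_{\xi_j'} S_j'$. Thus its transpose $L_j(\sE) = \partial_s \Sigma_j'(0)^\top \colon \R^n \to \R^{d_j'}$ is a surjective linear map and
\begin{equation*}
    \ker L_j(\sE) = \big(\mathrm{image}(\partial_s \Sigma_j'(0))\big)^{\perp} = (T_{\xi_j'} S_j')^{\perp} = N_{\xi_j'} S_j'.
\end{equation*}
Applying Proposition~\ref{prop: reg to normal} with $n_j := d_j'$, it follows immediately that $\BLreg{\bL(\sE)}{\bp_k} < \infty$ is equivalent to $\big|\bigwedge_{j=1}^k N_{\xi_j'} S_j'\big| > 0$, which proves \eqref{eq: BL to Bejenaru}.

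For the final assertion, I would verify that the hypotheses and conclusion of Theorem~\ref{thm: Bejenaru} are exactly a special case of Theorem~\ref{thm: nested} with $\cS_j = (S_j, S_j')$. The transversality hypothesis \eqref{eq: trans} of Theorem~\ref{thm: Bejenaru} matches $\BLreg{\bL(\sE)}{\bp_k} < \infty$ via \eqref{eq: BL to Bejenaru}; the compatibility condition on scales collapses to $0 < \mu_{j,1} < 1$, matching the hypothesis $0 < \mu_j < 1$ once we set $\mu_{j,1} := \mu_j$; the codimensions satisfy $m(j,1) = \codim{S_j'}{S_j} = m_j$; and the support hypothesis $\supp f_j \subseteq \cN_{\bmu_j} \cS_j$ reduces to $\supp f_j \subseteq \cN_{\mu_j} S_j'$. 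Substituting these identifications into the conclusion of Theorem~\ref{thm: nested} yields exactly \eqref{eq: local multlinear}. The step that requires the most care is the kernel identification; once that is in place, the rest is a matter of matching notation, and I do not anticipate any serious obstacle.
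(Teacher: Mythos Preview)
Your proposal is correct and follows essentially the same route as the paper's proof: identify $L_j(\sE)=\partial_s\Sigma_j'(0)^\top$, observe that $\ker L_j(\sE)=N_{\xi_j'}S_j'$, and invoke Proposition~\ref{prop: reg to normal}. Your treatment is in fact a bit more detailed than the paper's, both in justifying the kernel identification and in spelling out the reduction of Theorem~\ref{thm: Bejenaru} to Theorem~\ref{thm: nested}.
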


\begin{proof}
For each $1\leq j\leq k$, let $\Sigma_j \colon U_j\rightarrow \R^n$ be the parametrisation of $S_j$, and let $M_j\subset U_j$ be a submanifold of $\R^{\dim(S_j)}$, parametrised by $\gamma_j:U_j'\rightarrow U_j$, such that $\Sigma_j(M_j)=S_j'$. Define $\Sigma_j':=\Sigma_j\circ\gamma_j:U_j'\rightarrow\R^n$. Then, by the definition of $\bL(\sE)$ we have that $L_j(\sE)=\partial_s\Sigma_j'(0)^\top$. Since $\ker(\partial_s\Sigma_j'(0)^\top)=N_{\xi_j'}S_j'$, we then see that the equivalence \eqref{eq: BL to Bejenaru} is merely a rephrasing of Proposition~\ref{prop: reg to normal} for this particular choice of Brascamp--Lieb datum.
\end{proof}


\subsection{Brascamp--Lieb constants and subensembles}\label{sec: BL ensemble}

Fix $2 \leq k \leq n$ and let $\mathscr{E}=((\cS_j)_{j=1}^k, \bq)$ be a transverse ensemble in the sense of Definition \ref{def: nested}, with $\cS_j=(S_j,(S_{j,\ell})_{\ell=1}^{r_j})$ and $\bq=(q_j)_{j=1}^k \subset (0,2]$. We say that $\mathscr{F}=( (\widetilde{\cS}_j)_{j=1}^k, \bq)$ is a \textit{subensemble} of $\mathscr{E}$ if $\widetilde{\cS}_j=(S_j, (S_{j,\ell})_{\ell=1}^{\tilde{r}_j})$ with $\tilde{r}_j \leq r_j$. Recall we allow values $\tilde{r}_j=0$, in which case $\widetilde{\cS}_j$ consists only of $S_j$.

\begin{proposition}\label{prop: BL ensemble}
    Let $\mathscr{E}$ be a transverse ensemble in $\R^n$. Then any subensemble $\mathscr{F}$ of $\mathscr{E}$ is also a transverse ensemble.
\end{proposition}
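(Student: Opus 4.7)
The plan is to reduce the statement to a comparison of the kernels of the Brascamp--Lieb maps associated to $\mathscr{E}$ and $\mathscr{F}$, and then invoke the subspace characterisation \eqref{eq:BLreg finite} of finiteness of the regularised Brascamp--Lieb constant.

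First I would write down the relationship between the parametrisations appearing in $\cS_j$ and $\widetilde{\cS}_j$. Since $\widetilde{\cS}_j$ is obtained from $\cS_j$ by truncating the chain at level $\tilde r_j \leq r_j$, the parametrisation $\Sigma_{j,r_j}$ appearing in the definition of $L_j(\mathscr{E})$ factors through the parametrisation $\Sigma_{j,\tilde r_j}$ appearing in the definition of $L_j(\mathscr{F})$. More precisely, setting $\gamma_j := \gamma_{j,\tilde r_j + 1} \circ \cdots \circ \gamma_{j,r_j}$, we have $\Sigma_{j,r_j} = \Sigma_{j,\tilde r_j} \circ \gamma_j$ and $\gamma_j(0) = 0$. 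Differentiating and taking transposes yields
\begin{equation*}
L_j(\mathscr{E}) = \partial_s \gamma_j(0)^\top \circ L_j(\mathscr{F}),
\end{equation*}
as maps $\R^n \to \R^{d_{j,r_j}}$.

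From this factorisation, the key consequence is the kernel inclusion $\ker L_j(\mathscr{F}) \subseteq \ker L_j(\mathscr{E})$ for each $1 \leq j \leq k$. Consequently, for any subspace $V \leq \R^n$, the rank--nullity identity \eqref{eq: rank-nullity} (applied to both $L_j(\mathscr{E})$ and $L_j(\mathscr{F})$) gives
\begin{equation*}
\dim(L_j(\mathscr{E}) V) = \dim V - \dim(V \cap \ker L_j(\mathscr{E})) \leq \dim V - \dim(V \cap \ker L_j(\mathscr{F})) = \dim(L_j(\mathscr{F}) V).
\end{equation*}

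To conclude, I would invoke the finiteness characterisation \eqref{eq:BLreg finite}. By hypothesis $\BLreg{\bL(\mathscr{E})}{\bp} < \infty$, so $\dim V \leq \sum_{j=1}^k p_j \dim(L_j(\mathscr{E}) V)$ for all $V \leq \R^n$. Combining this with the dimension inequality above yields $\dim V \leq \sum_{j=1}^k p_j \dim(L_j(\mathscr{F}) V)$ for all such $V$, which by \eqref{eq:BLreg finite} implies $\BLreg{\bL(\mathscr{F})}{\bp} < \infty$, as required. I do not anticipate any serious obstacle here: the only mildly subtle point is verifying the chain-rule identity for the composed parametrisations and checking that the resulting kernel inclusion goes in the direction needed for the comparison of subspace dimensions.
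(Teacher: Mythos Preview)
Your proposal is correct and follows essentially the same approach as the paper: both establish the factorisation $L_j(\mathscr{E}) = (\partial_s \gamma_{j,\tilde r_j, r_j}(0))^\top \circ L_j(\mathscr{F})$ via the chain rule, deduce $\dim(L_j(\mathscr{E})V) \leq \dim(L_j(\mathscr{F})V)$, and conclude by \eqref{eq:BLreg finite}. The only cosmetic difference is that the paper reads off the dimension inequality directly from the factorisation (post-composition with a linear map cannot increase the dimension of an image), whereas you route through the kernel inclusion and rank--nullity; these are equivalent one-line observations.
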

\begin{proof}
    It suffices to verify that $\BLreg{\bL(\mathscr{F})}{\bp} < \infty$ with $\bL(\mathscr{F})$ as in \eqref{eq: map ensemble def}. In view of \eqref{eq: chain of param}, for each $1 \leq j \leq k$ we have that
    \begin{equation*}
        \Sigma_{j,r_j}= \Sigma_{j,\tilde{r}_j} \circ \gamma_{j,\tilde{r}_j,r_j} \quad \text{ for }\,\, \gamma_{j,\tilde{r}_j,r_j}:= \gamma_{j,\tilde{r}_{j} +1} \circ \cdots \circ \gamma_{j, r_j};
    \end{equation*}
    here, for the case $\tilde{r}_j=r_j$, we simply have $\gamma_{j,r_j,r_j} :=\mathrm{Id}$. Thus, 
    \begin{equation*}
        L_j(\mathscr{E})=\partial_{s} \Sigma_{j,r_j}(0)^\top =  (\partial_s \gamma_{j,\tilde{r}_j, r_j}(0) )^{\top} (\partial_{\tilde{s}} \Sigma_{j,\tilde{r}_j} (0))^\top = (\partial_s \gamma_{j,\tilde{r}_j, r_j}(0) )^{\top} L_j(\mathscr{F})
    \end{equation*}
    where we have used that $\gamma_{j,\ell}(0)=0$ for all $1 \leq \ell \leq r_j$. It is then clear that $\dim(L_j(\mathscr{E}) V) \leq \dim(L_j(\mathscr{F})V)$ for any $V \leq \R^n$ and, in view of the characterisation \eqref{eq:BLreg finite} for $\BLreg{\bL}{\bp}$, we conclude that the finiteness of $\BLreg{\bL(\mathscr{E})}{\bp}$ implies that of $\BLreg{\bL(\mathscr{F})}{\bp}$.
\end{proof}
\begin{remark}
    Similarly to Remark \ref{rem: qual to quant}, if the linear surjections $L_j(\sE)$ and $L_j(\sF)$ are orthogonal projections for $1\leq j\leq k$, then one can show that $\BLreg{\bL(\sF)}{\bp}\lesssim \BLreg{\bL(\sE)}{\bp}$, where the implicit constant does not depend on the choice of ensemble. This refined quantitative result is not required for our purposes, and so we leave the proof to the interested reader. 
\end{remark}



\subsection{Kakeya--Brascamp--Lieb inequalities} Here we are interested in a class of \textit{Kakeya--Brascamp--Lieb inequalities}, which can be thought of as perturbed variants of the regularised Brascamp--Lieb inequalities introduced above. The following result is due to Maldague \cite{Maldague2022}. 

\begin{theorem}[Kakeya--Brascamp--Lieb {\cite[Theorem 1.2]{Maldague2022}}]\label{thm: Mald}
Let $(\bL,\bp)$ be a Brascamp--Lieb datum
such that
$\BLreg{\bL}{\bp} < \infty$ . Then there exists $\nu > 0$ such that the following holds. For all $\varepsilon>0$, there exists a constant $C_\varepsilon \geq 1$ such that the inequality
\begin{equation*}    \int_{Q_R}\prod_{j=1}^k\Big|\sum_{T_j\in\T_j}c_{T_j}\chi_{T_j}\Big|^{p_j}\leq C_\varepsilon R^{\varepsilon} \lambda^{n-\varepsilon} \prod_{j=1}^k\Big(\sum_{T_j\in\T_j}|c_{T_j}|\Big)^{p_j}
\end{equation*}
holds for all $R\geq 1$ and $0<\lambda<R$, whenever the $\T_j$ are countable collections of infinite slabs of width $\lambda$ whose core $n_j$-dimensional planes are, modulo translations, within a distance $\nu>0$ (in the grassmannian sense) from the fixed subspace $X_j:=\ker L_j$, and $(c_{T_j})_{T_j \in \T_j} \in \ell^1(\T_j)$ for $1 \leq j \leq k$.
\end{theorem}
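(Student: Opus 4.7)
The strategy is to handle the \emph{aligned} case (when every slab is a translate of $X_j = \ker L_j$) via a direct application of the regularised Brascamp--Lieb inequality, and then extend to the perturbed case by an induction-on-scale argument in $R$.

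\textbf{Aligned case.} Since $\BLreg{\bL}{\bp} < \infty$, the characterisation \eqref{eq:charact BL 1R} gives $\alpha(\bL, \bp) = 0$; combined with the scaling identity in the footnote to \eqref{eq: BLdef} this yields $\BLscale{\bL}{\bp}{\lambda}{R} \lesssim \lambda^{n - \sum_j p_j n_j}$ uniformly in $R$. When each slab $T_j \in \T_j$ is a translate of $X_j$, the indicator $\chi_{T_j}$ depends on $x$ only through $L_j(x)$, so $\chi_{T_j} = \chi_{B_{T_j}} \circ L_j$ for some set $B_{T_j} \subset \R^{n_j}$ of diameter comparable to $\lambda$. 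Approximating each $B_{T_j}$ from above by a union of finitely many cubes in the fixed dyadic $\lambda$-grid, one obtains $\sum_{T_j} c_{T_j} \chi_{T_j} \lesssim g_j \circ L_j$ where $g_j$ is nonnegative, constant on $\lambda$-cubes, and satisfies $\int g_j \lesssim \lambda^{n_j} \sum_{T_j} |c_{T_j}|$. Inserting into \eqref{eq: BLdef} and collecting powers of $\lambda$ produces the bound $\lambda^n \prod_{j} (\sum_{T_j} |c_{T_j}|)^{p_j}$, which is strictly stronger than the claim and incurs no $R^\varepsilon$ loss.

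\textbf{Perturbed case.} The main obstacle is that distinct slabs in $\T_j$ may have distinct directions $L_j^{(T)}$ close to $L_j$, so the left-hand side is no longer a pullback of $\prod_j g_j^{p_j}$ under any single datum. I would induct on the scale $R$ (after rescaling $x \mapsto \lambda x$ to reduce to $\lambda = 1$), with inductive hypothesis that the claim holds at all smaller scales with constant $C_\varepsilon R^\varepsilon$. In the inductive step, decompose $Q_R$ into sub-cubes of some intermediate sidelength $r$; within each such sub-cube, the perturbed slabs differ from genuine $X_j$-translates by a transverse amount of order $\nu r$, so for $\nu$ sufficiently small, the aligned bound applies locally via an effective datum $\bL'$ within a $\nu$-neighbourhood of $\bL$. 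Upper semi-continuity of $\alpha(\cdot, \bp)$ at $\bL$ (the integer-valued expressions $\dim V - \sum p_j \dim L_j V$ are upper semi-continuous in $\bL$ for each fixed $V$) ensures $\alpha(\bL', \bp) = 0$ and hence that $\BLreg{\bL'}{\bp}$ remains bounded uniformly throughout the neighbourhood, so the constants produced by the local applications do not depend on the precise orientations of the slabs.

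\textbf{Main obstacle.} The hardest step is the combinatorial reassembly after summing the local bounds over sub-cubes: one obtains a quantity of the form $\sum_Q \prod_j (\sum_{T_j \cap Q \neq \varnothing} |c_{T_j}|)^{p_j}$ which must be repackaged into $\prod_j (\sum_{T_j} |c_{T_j}|)^{p_j}$, while controlling the multiplicity with which each infinite slab meets the sub-cubes. A Bourgain--Guth-style broad/narrow dichotomy, coupled with iteration of the induction through $O_\varepsilon(1)$ scales and a careful balance between $r$, $\nu$ and the target loss, should close the argument: in the broad regime one applies the aligned estimate locally, while in the narrow regime one reinvokes the induction hypothesis at a strictly smaller scale, with the $R^\varepsilon$ slack in the inductive hypothesis absorbing the sub-polynomial loss accumulated through the iteration.
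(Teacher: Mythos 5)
There is a genuine gap. First, for context: the paper does not prove this statement at all — it is quoted as a black box from Maldague \cite[Theorem 1.2]{Maldague2022}, with only routine reductions recorded (the rescaling $\delta=\lambda/R$, passing from coefficients $c_{T_j}\equiv 1$ to general $\ell^1$ coefficients by approximation, and the identification $\tilde{\alpha}(\bL,\bp)=\alpha(\bL,\bp)$ so that $\BLreg{\bL}{\bp}<\infty$ gives the hypothesis of Maldague's theorem). So a ``proof'' of Theorem~\ref{thm: Mald} must essentially reprove Maldague's theorem, which is a substantial result whose only known proofs (Maldague's induction on scales in the spirit of Guth's short proof of multilinear Kakeya, or the endpoint arguments of Zhang and Zorin-Kranich) contain real work beyond the regularised Brascamp--Lieb input. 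Your aligned case is fine and matches the expected starting point: when every core plane is a translate of $X_j$, the left-hand side is a pullback under $\bL$, and $\BLscale{\bL}{\bp}{\lambda}{R}\lesssim\lambda^{\,n-\sum_j p_jn_j}$ (uniformly in $R$, since $\alpha(\bL,\bp)=0$) gives the bound $\lambda^n\prod_j(\sum_{T_j}|c_{T_j}|)^{p_j}$ with no $\varepsilon$-loss.

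The perturbed case, however, is where the theorem lives, and your inductive step does not work as described. Within a sub-cube of side $r$, a slab whose core plane makes angle $\leq\nu$ with $X_j$ deviates from an $X_j$-translate by $\sim\nu r$, so replacing it by an aligned slab of comparable width requires $\nu r\lesssim\lambda$; since $\nu$ is a fixed constant, this forces $r\sim\lambda$, at which scale the estimate is trivial and iterating yields no gain. The alternative you suggest — applying the aligned bound inside each sub-cube ``via an effective datum $\bL'$'' — fails because the slabs within a single family $\T_j$ have \emph{different} core directions, so there is no single datum $\bL'$ whose pullback structure captures $\sum_{T_j}c_{T_j}\chi_{T_j}$ on that sub-cube; this is precisely the difficulty you flag at the outset and then do not resolve. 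Your stability claim is also too soft: for each fixed $V$ the quantity $\dim V-\sum_jp_j\dim L_j'V$ is indeed upper semicontinuous in $\bL'$, but the supremum over $V$ of upper semicontinuous functions need not be, and for any $\epsilon>0$ one can choose $V$ (adapted to the perturbation) on which $\dim L_j'V<\dim L_jV$; uniform boundedness of the Brascamp--Lieb data over a $\nu$-neighbourhood is a genuine stability theorem (Bennett--Bez--Flock--Lee and its regularised analogues), not a semicontinuity remark. Finally, the ``combinatorial reassembly'' is not the residual technicality you present it as, and a Bourgain--Guth broad/narrow dichotomy is not the mechanism in any known proof: transversality is already hypothesised, and the issue is direction variation within each family. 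The working scheme (Guth, Maldague) instead exploits two-scale self-similarity — apply the inductive hypothesis at the fine scale inside each intermediate cube \emph{and} again at the coarse scale after fattening each slab of width $\lambda$ to the intermediate width, gluing the two applications and accepting a multiplicative loss per step that accumulates to $R^{\varepsilon}$. As written, your sketch omits this key step, so the perturbed case remains unproved; the safe fix is to do as the paper does and cite \cite[Theorem 1.2]{Maldague2022}, supplying only the rescaling, coefficient, and $\tilde{\alpha}=\alpha$ reductions.
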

Here we say $T \subseteq \R^n$ \textit{is an infinite slab of width} $r > 0$ if\footnote{Here and below we let $|x|_{\infty} := \max_{1 \leq \ell \leq m} |x_{\ell}|$ for $x = (x_1, \dots, x_m) \in \R^m$.} 
\begin{equation*}
   T = \{x \in \R^n : |Lx - v|_{\infty} < r\} 
\end{equation*}
for some linear surjective map $L \colon \R^n \to \R^m$ and $v \in \R^m$. In this case, the affine subspace $V := \{x \in \R^n : Lx = v\}$ is called the \textit{core plane} of $T$. We note that endpoint variants of Theorem~\ref{thm: Mald} can be found in \cite{Zhang2018, Zorin-Kranich2020}. The case in which $\dim \ker L_j =1$ for all $1 \leq j \leq k$ corresponds to the $k$-linear Kakeya inequality from \cite[Theorem 5.1]{BCT2006}. 

We remark that there are some differences between the statement of Theorem~\ref{thm: Mald} and that of \cite[Theorem 1.2]{Maldague2022}. 
\begin{itemize}
    \item Theorem~\ref{thm: Mald} is a rescaled version of \cite[Theorem 1.2]{Maldague2022}. The parameter $\delta$ in \cite[Theorem 1.2]{Maldague2022} corresponds to $\lambda/R$ in Theorem~\ref{thm: Mald}.
    \item The coefficients $(c_{T_j})_{T_j \in \T_j}$ are taken to all be $1$ in \cite[Theorem 1.2]{Maldague2022}. However, the result for general coefficients follows from a standard approximation argument: see \cite{BCT2006}. 
    \item Theorem~\ref{thm: Mald} is stated in terms of $\BLreg{\bL}{\bp}$ whilst  \cite[Theorem 1.2]{Maldague2022} is stated in terms of the exponent 
    \begin{equation*}
       \tilde{\alpha}(\bL, \bp) := \sup_{V\leq \R^n}\Big(\dim(V)-\sum_{j=1}^k p_j\dim(\pi_jV)\Big)
    \end{equation*}
    where $\pi_j \colon \R^n \to \R^n$ is the orthogonal projection onto the subspaces $(X_j)^{\perp}$ for $1 \leq j \leq n$. To relate the former from the latter, one observes that the hypothesis $\BLreg{\bL}{\bp}<\infty$ implies $\tilde{\alpha}(\bL, \bp) = 0$. Indeed, $\ker \pi_j = X_j = \ker L_j$ and, given any $V \leq \R^n$, it follows from rank-nullity as in \eqref{eq: rank-nullity} that $\dim \pi_j(V) = \dim L_j(V)$ for all $1 \leq j \leq n$. Consequently, $\tilde{\alpha}(\bL, \bp) = \alpha(\bL, \bp)$ and the desired implication follows from the characterisation \eqref{eq:charact BL 1R}.
\end{itemize}


\section{Fourier extension preliminaries}\label{sec: extension}




\subsection{Parametrising the neighbourhoods}\label{subsec: parametrising} Let $\cS = (S_0, (S_{\ell})_{\ell = 1}^r)$ be a nested family of submanifolds of $\R^n$, with $S_0 = \Sigma_0(U_0)$ a $d_0$-dimensional submanifold of $\R^n$. As in \S\ref{subsec: main result}, we have a family of open neighbourhoods $U_{\ell} \subseteq \R^{d_{\ell}}$ of the origin, with $d_\ell=\dim(S_\ell)$ satisfying $d_r < \cdots < d_1 < d_0$, and parametrisations $\gamma_{\ell} \colon U_{\ell} \to U_{\ell-1}$  for $1 \leq \ell \leq r$ satisfying $\gamma_{\ell}(0) = 0$ such that
\begin{equation*}
    S_{\ell} = \Sigma_{\ell}(U_{\ell}) \quad \textrm{where} \quad \Sigma_{\ell} := \Sigma_0 \circ \sigma_{\ell} \quad \textrm{for} \quad \sigma_{\ell} := \gamma_1 \circ \cdots \circ \gamma_{\ell}.
\end{equation*}
Each set $M_\ell:=\sigma_\ell(U_\ell)$ is a $d_\ell$-dimensional submanifold of $\R^{d_0}$ and $S_\ell=\Sigma_0(M_\ell)$. We define 
\begin{equation*}
    m_{\ell} := \codim{S_{\ell}}{S_{\ell-1}} = d_{\ell - 1} - d_{\ell} \qquad \textrm{and} \qquad c_{\ell} := \codim{S_{\ell}}{S_0} = d_0 - d_{\ell}
\end{equation*}
for $1 \leq \ell \leq r$. Often we drop the subscript $0$ and write $S := S_0$, $d := d_0$, $\Sigma := \Sigma_0$ and $U := U_0$.

We begin with some (unfortunately rather technical) definitions which allow us to work inductively with the nested framework of Theorem~\ref{thm: nested}. With the above setup, we may define
\begin{equation*}
    M_{k,\ell} := \gamma_{k,\ell}(U_\ell) \subseteq U_k \quad \textrm{where} \quad \gamma_{k,\ell} := \gamma_{k+1} \circ \cdots \circ \gamma_\ell \qquad \textrm{for $k + 1 \leq \ell \leq r$,}
\end{equation*}
so that each $M_{k,\ell}$ is a smooth $d_{\ell}$-dimensional submanifold of $\R^{d_k}$. Recall that the support conditions in Theorem~\ref{thm: nested} are defined in terms of neighbourhoods of the submanifolds $M_{\ell}$, and note that $M_{\ell} = M_{0,\ell}$ with the above notation. In order to argue inductively, we shall work more generally with neighbourhoods of the $M_{k, \ell}$ for $0 \leq k < \ell$. 

To describe the neighbourhoods of the $M_{k, \ell}$, we introduce a family of parametrising maps $\Phi_{k,\ell}$. Given $1\leq \ell
\leq r$, we let $G_\ell \colon U_\ell\rightarrow \mathrm{Mat}(\R, d_{\ell-1}\times m_\ell)$ denote a $(d_{\ell-1}\times m_\ell)$-matrix-valued function such that the columns of $G_\ell(s_\ell)$ form an orthogonal basis for $N_{\gamma_\ell(s_\ell)}M_{\ell-1,\ell}\subset U_{\ell-1}$. Let $\bar{\mu}_{\circ} > 0$ be an admissible parameter, chosen sufficiently small for the purposes of the forthcoming argument. Given $1 \leq k < \ell \leq r$, throughout the following we write
\begin{equation*}
 V_{k,\ell} := \prod_{t = k+1}^{\ell} (-\bar{\mu}_{\circ},\bar{\mu}_{\circ})^{m_t} \quad \textrm{and} \quad   \bdeta_{k,\ell} = (\eta_{k+1},\dots, \eta_\ell) \quad \textrm{for $\bdeta_{k,\ell} \in V_{k,\ell}$.}
\end{equation*}
For fixed $1 \leq \ell \leq r$, we recursively define maps $\Phi_{k,\ell} \colon U_\ell\times V_{k,\ell} \rightarrow U_k$ for $0 \leq k \leq \ell$, where we interpret $U_{\ell} \times V_{\ell, \ell}$ as $U_{\ell}$, by setting $\Phi_{\ell,\ell} \colon U_\ell  \to U_{\ell}$ to be the identity map and then defining
\begin{equation}\label{eq: phi def}
    \Phi_{k-1,\ell}(s_\ell;\bdeta_{k-1,\ell}) :=\gamma_k\circ\Phi_{k,\ell}(s_\ell;\bdeta_{k,\ell})+G_{k}\circ\Phi_{k,\ell}(s_\ell;\bdeta_{k,\ell})\eta_k^\top 
\end{equation}
for $1 \leq k \leq \ell$, where here $\bdeta_{k-1,\ell} = (\eta_k, \bdeta_{k,\ell})$ and $\bdeta_{\ell,\ell}$ is a null variable.  Since each set $\gamma_k(U_k)$ is, by hypothesis, compactly contained in $U_{k-1}$, provided $\bar{\mu}_\circ > 0$ is chosen sufficiently small, the maps $\Phi_{k,\ell}$ are well-defined and indeed map into $U_k$. By a simple induction argument,
\begin{equation}\label{eq: Phi vs gamma}
    \Phi_{k,\ell}(s_{\ell}; \bzero) = \gamma_{k,\ell}(s_{\ell}),
\end{equation}
where $\gamma_{\ell,\ell} : =\mathrm{Id}_{U_\ell}$.

We may equivalently write \eqref{eq: phi def} as
\begin{equation}\label{eq: phi presplit}
\Phi_{k-1,\ell}(s_\ell;\bdeta_{k-1,\ell})=\Phi_{k-1,k}(\Phi_{k,\ell}(s_\ell;\bdeta_{k,\ell});\eta_k).
\end{equation}
Iterating the above observation, we shall derive the following formula.
\begin{lemma}
    For all $0\leq k\leq \ell\leq r$, $s_r\in W_r$, and $\bdeta_{k,r}\in V_{k,r}$ we have
    \begin{equation}\label{eq: phi split}
    \Phi_{k,r}(s_r;\bdeta_{k,r})=\Phi_{k,\ell}(\Phi_{\ell,r}(s_r;\bdeta_{\ell,r});\bdeta_{k,\ell}),
\end{equation}
where we have written $\bdeta_{k,r}=(\bdeta_{k,\ell}, \bdeta_{\ell,r})$.
\end{lemma}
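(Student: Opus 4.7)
The target identity \eqref{eq: phi split} is a generalisation of \eqref{eq: phi presplit}, which corresponds precisely to the special case where the middle index differs from the left index by $1$. The plan is to prove \eqref{eq: phi split} by a straightforward induction on the index difference $\ell - k$, with $k$ and $r$ fixed and $k \leq \ell \leq r$, iterating \eqref{eq: phi presplit}. All non-trivial content of the construction is already captured by the recursive definition \eqref{eq: phi def} and its reformulation \eqref{eq: phi presplit}, so the argument is essentially bookkeeping for iterated function composition.

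The base case $\ell = k$ is immediate from the convention that $\Phi_{k,k}$ is the identity map and $\bdeta_{k,k}$ is a null variable, so both sides of \eqref{eq: phi split} equal $\Phi_{k,r}(s_r; \bdeta_{k,r})$. The case $\ell = k+1$ is a direct restatement of \eqref{eq: phi presplit} (with the triple $(k-1,k,\ell)$ there playing the role of $(k,k+1,r)$ here), using that $\bdeta_{k,k+1} = (\eta_{k+1})$ is a single variable.

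For the inductive step, assume \eqref{eq: phi split} holds for all admissible triples with $\ell - k \leq m$ and consider a triple $(k, \ell, r)$ with $\ell - k = m + 1$. First apply \eqref{eq: phi presplit} to peel off the outermost layer, obtaining
\begin{equation*}
    \Phi_{k,r}(s_r; \bdeta_{k,r}) = \Phi_{k, k+1}\bigl(\Phi_{k+1, r}(s_r; \bdeta_{k+1, r});\, \eta_{k+1}\bigr).
\end{equation*}
Next apply the inductive hypothesis to the inner term $\Phi_{k+1, r}(s_r; \bdeta_{k+1, r})$ — which is permissible since $\ell - (k+1) = m$ — to rewrite
\begin{equation*}
    \Phi_{k+1, r}(s_r; \bdeta_{k+1, r}) = \Phi_{k+1, \ell}\bigl(\Phi_{\ell, r}(s_r; \bdeta_{\ell, r});\, \bdeta_{k+1, \ell}\bigr).
\end{equation*}
Substituting gives an expression of the form $\Phi_{k, k+1}\bigl(\Phi_{k+1, \ell}(\,\cdot\,;\, \bdeta_{k+1, \ell});\, \eta_{k+1}\bigr)$ applied to $\Phi_{\ell, r}(s_r; \bdeta_{\ell, r})$. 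Applying \eqref{eq: phi presplit} once more in the collapsing direction to the outer two compositions — and using the splitting $\bdeta_{k, \ell} = (\eta_{k+1}, \bdeta_{k+1, \ell})$ implicit in the convention $\bdeta_{k, \ell} = (\eta_{k+1}, \ldots, \eta_\ell)$ — this combines into $\Phi_{k, \ell}\bigl(\Phi_{\ell, r}(s_r; \bdeta_{\ell, r});\, \bdeta_{k, \ell}\bigr)$, which is the desired identity.

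The only real care needed is in tracking the decompositions of the $\bdeta$ variables through each substitution, and I do not anticipate any genuine obstacles; the whole statement is morally an associativity identity for the iterated construction \eqref{eq: phi def}.
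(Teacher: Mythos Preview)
Your proposal is correct and takes essentially the same approach as the paper: both arguments iterate \eqref{eq: phi presplit} via induction, applying it once to peel off the outermost layer, invoking the inductive hypothesis on the inner piece, and then applying \eqref{eq: phi presplit} once more to recombine. The only difference is the choice of induction variable (you induct on $\ell - k$ over all admissible triples, the paper inducts downward on $k$ for all $\ell$ simultaneously), and one minor presentational slip: you say ``with $k$ and $r$ fixed'' but then invoke the hypothesis at the shifted triple $(k+1,\ell,r)$---just drop that phrase, since your stated hypothesis (``all admissible triples with $\ell - k \leq m$'') is already what you actually use.
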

\begin{proof}
    The case when $k=\ell=r$ is trivial. Now assume, by way of inductive hypothesis, that \eqref{eq: phi split} holds for some $1\leq k\leq r$ and all $\ell\in\N$ such that $k\leq\ell\leq r$. 
    
    Using the induction hypothesis, we shall show 
    \begin{equation}\label{eq: phi split 1}
        \Phi_{k-1,r}(s_r;\bdeta_{k-1,r})=\Phi_{k-1,\ell}(\Phi_{\ell,r}(s_r;\bdeta_{\ell,r});\bdeta_{k-1,\ell})
    \end{equation}
    holds for all $k-1 \leq \ell \leq r$. If $\ell = k-1$, then this result is a trivial consequence of the definitions, so we assume $k \leq \ell \leq r$. By \eqref{eq: phi presplit}, we deduce that 
    \begin{equation}\label{eq: phi split 2}
         \Phi_{k-1,r}(s_r;\bdeta_{k-1,r})=\Phi_{k-1,k}(\Phi_{k,r}(s_r;\bdeta_{k,r});\eta_k).
    \end{equation}
    Our inductive hypothesis then implies that
    \begin{equation}\label{eq: phi split 3}
         \Phi_{k-1,r}(s_r;\bdeta_{k-1,r})=\Phi_{k-1,k}(\Phi_{k,\ell}(\Phi_{\ell,r}(s_r;\bdeta_{\ell,r});\bdeta_{k,\ell});\eta_k).
    \end{equation}
    Applying \eqref{eq: phi presplit} again, here with $s_\ell=\Phi_{\ell,r}(s_r;\bdeta_{\ell,r})$, then yields
    \begin{equation}\label{eq: phi split 4}
         \Phi_{k-1,k}(\Phi_{k,\ell}(\Phi_{\ell,r}(s_r;\bdeta_{\ell,r});\bdeta_{k,\ell});\eta_k)=\Phi_{k-1,\ell}(\Phi_{\ell,r}(s_r;\bdeta_{\ell,r});\bdeta_{k-1,\ell}).
    \end{equation}
    Combining \eqref{eq: phi split 2}, \eqref{eq: phi split 3} and \eqref{eq: phi split 4}, we obtain \eqref{eq: phi split 1}, which closes the induction.
\end{proof}

 For each $1\leq k\leq r$, the derivative  $\partial_{s,\eta_k}\Phi_{k-1,k}(s; \eta_k)|_{\eta_k = 0} = [\partial_s \gamma_k(s) \,\,\,  G_k(s)]$ has full rank over all of $U_k$. By the inverse function theorem, provided  $\bar{\mu}_{\circ} > 0$ is sufficiently small, there exist open neighbourhoods of the origin $W_{k}\subseteq U_{k}$, $1\leq k\leq r$, such that the restricted maps $\Phi_{k-1,k} \colon W_k\times (-\bar{\mu}_{\circ},\bar{\mu}_{\circ})^{m_k}\rightarrow W_{k-1}$ are diffeomorphisms.\footnote{Note that the sets $W_j$ appear in \textit{both} the domain and codomain of these restricted mappings. This `compatibility' can be achieved by repeatedly pruning the sets arising from direct application of the inverse function theorem; we leave the details to the dedicated reader.} By iterating \eqref{eq: phi presplit}, the same is true of the restrictions 
 \begin{equation}\label{eq: diff chain}
      \Phi_{k,\ell} \colon W_{\ell}\times V_{k,\ell} \rightarrow W_k, \qquad 0\leq k\leq\ell\leq r. 
 \end{equation}
We later refer to these restricted mappings as the \textit{diffeomorphism chain} for $\cS$.

Let $C_{\circ}\geq 1$ be an admissible constant, chosen sufficiently large for the forthcoming purposes of the argument and define $\mu_{\circ} := C_{\circ}^{-9/8} \bar{\mu}_{\circ}$. We later refer to this quantity as the \textit{threshold width} for $\cS$. \label{page: threshold} Given $\bmu=(\mu_1, \dots,\mu_\ell)$ with $\max_{1 \leq \ell \leq r} \mu_{\ell} \leq C_{\circ}\mu_{\circ}$, define the set
 \begin{equation}\label{eq: Omega def}
     \Omega_{k,\ell}(\bmu):=\Phi_{k,\ell}(W_{\ell}\times P_{k,\ell}(\bmu)) \subseteq W_k \quad \textrm{where} \quad P_{k,\ell}(\bmu):=\prod_{t=k+1}^\ell[-C_{\circ}^{1/8}\mu_t,C_{\circ}^{1/8}\mu_t]^{m_t}. 
 \end{equation}
The following lemma tells us that the $\Phi_{k,\ell}$ can indeed be thought of as parametrising the (anisotropic) neighbourhoods of the submanifolds $M_{k,\ell}$.

 \begin{lemma}\label{lem: Omega incl}
     Provided $C_{\circ}\geq 1$ is chosen sufficiently large,
     \begin{equation}
\bigcap_{t=k+1}^\ell\cN_{\mu_t}M_{k,t}\cap W_k\subseteq\Omega_{k,\ell}(\bmu)\subseteq \bigcap_{t=k+1}^\ell\cN_{C_{\circ}\mu_t}M_{k,t}\cap W_k\label{eq: omega incl}
     \end{equation}
     holds for $1\leq k\leq \ell\leq r$ and $\bmu = (\mu_1, \dots ,\mu_\ell)$ such that $0 < \mu_1 \leq \cdots \leq \mu_r\leq  C_{\circ}\mu_\circ$. 
 \end{lemma}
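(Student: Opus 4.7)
The plan is to prove both inclusions by exploiting the splitting identity \eqref{eq: phi split} together with elementary mean value and inverse function theorem estimates for the diffeomorphism chain \eqref{eq: diff chain}. The upper inclusion reduces to a Lipschitz bound on $\Phi_{k,t}$ in the $\bdeta$ variable, while the lower inclusion requires an induction on the depth $\ell-k$.

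For the upper inclusion, I would fix $u = \Phi_{k,\ell}(s_\ell; \bdeta_{k,\ell}) \in \Omega_{k,\ell}(\bmu)$ and, for each $k+1 \leq t \leq \ell$, use \eqref{eq: phi split} to rewrite $u = \Phi_{k,t}(\tilde{s}_t; \bdeta_{k,t})$, where $\tilde{s}_t := \Phi_{t,\ell}(s_\ell; \bdeta_{t,\ell}) \in W_t$. Since $\Phi_{k,t}(\tilde{s}_t; \bzero) = \gamma_{k,t}(\tilde{s}_t) \in M_{k,t}$ by \eqref{eq: Phi vs gamma}, the mean value theorem combined with a uniform (admissible) bound on $\partial_{\bdeta}\Phi_{k,t}$ yields
\begin{equation*}
\dist(u, M_{k,t}) \leq |\Phi_{k,t}(\tilde{s}_t; \bdeta_{k,t}) - \Phi_{k,t}(\tilde{s}_t; \bzero)| \leq C|\bdeta_{k,t}|_{\infty}
\end{equation*}
for some admissible $C$. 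Using the monotonicity $\mu_1 \leq \cdots \leq \mu_\ell$, one has $|\bdeta_{k,t}|_\infty \leq C_{\circ}^{1/8}\mu_t$, and hence $\dist(u, M_{k,t}) \leq CC_{\circ}^{1/8}\mu_t \leq C_{\circ}\mu_t$ provided $C_{\circ}$ is admissibly large (for instance $C_\circ \geq C^{8/7}$).

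For the lower inclusion I would induct on the depth $N := \ell - k$. Given $u$ in the intersection on the left-hand side of \eqref{eq: omega incl}, the diffeomorphism \eqref{eq: diff chain} produces a unique decomposition $u = \Phi_{k,\ell}(s_\ell; \bdeta_{k,\ell})$, so it suffices to establish $|\eta_t|_\infty \leq C_{\circ}^{1/8}\mu_t$ for each $t$. The base step extracts $\eta_{k+1}$: since $u \in \cN_{\mu_{k+1}} M_{k,k+1}$, there is some $v_{k+1} = \gamma_{k+1}(w_{k+1}) \in M_{k,k+1}$ with $|u - v_{k+1}| < \mu_{k+1}$, and for $\mu_{\circ}$ admissibly small the proximity forces $w_{k+1} \in W_{k+1}$, so that $v_{k+1} = \Phi_{k,k+1}(w_{k+1}; \bzero)$; applying the Lipschitz bound on $\Phi_{k,k+1}^{-1}$ to compare preimages of $u$ and $v_{k+1}$ gives $|\eta_{k+1}|_\infty \leq C\mu_{k+1}$. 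The \emph{same} comparison, applied to realisers $v_t = \gamma_{k+1}(\gamma_{k+1,t}(w_t)) \in M_{k,t}$ of $\dist(u, M_{k,t})$ for $t \geq k+2$, which factor through the chain relation $M_{k,t} = \gamma_{k+1}(M_{k+1,t})$, simultaneously yields $|\tilde{s}_{k+1} - \gamma_{k+1,t}(w_t)|_\infty \leq C\mu_t$, where $\tilde{s}_{k+1} := \Phi_{k+1,\ell}(s_\ell; \bdeta_{k+1,\ell})$; hence $\tilde{s}_{k+1} \in \cN_{C\mu_t} M_{k+1,t}$. The inductive hypothesis applied to $\tilde{s}_{k+1} \in W_{k+1}$ and the shorter chain $(M_{k+1,t})_{t=k+2}^\ell$ at scales $C\mu_t$ then provides $|\eta_t|_\infty \leq \kappa(N-1) C\mu_t$ for $k+2 \leq t \leq \ell$, where $\kappa(\cdot)$ is the admissible function tracking the constant through the iteration; since $\kappa$ satisfies a recursion of the form $\kappa(N+1)= C\kappa(N)$ and $N \leq r \leq n$, the terminal value $\kappa(r)$ is admissible, and choosing $C_{\circ}$ admissibly large (say $C_\circ \geq \kappa(r)^8$) absorbs everything into the prefactor $C_{\circ}^{1/8}$.

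The main technical point is the propagation step in the lower inclusion: after peeling off $\eta_{k+1}$, the reduced point $\tilde{s}_{k+1}$ must still lie in a comparable neighbourhood of each subsequent $M_{k+1,t}$. This relies on the chain factorisation $M_{k,t} = \gamma_{k+1}(M_{k+1,t})$, which allows a single application of the inverse function theorem bound for $\Phi_{k,k+1}^{-1}$ to simultaneously extract $\eta_{k+1}$ and estimate the displacement of $\tilde{s}_{k+1}$ from $M_{k+1,t}$; without this factorisation one could only control $\dist(u, M_{k,t})$, not the analogous distance at the next level. The quantitative choice $\mu_{\circ} = C_{\circ}^{-9/8}\bar{\mu}_{\circ}$ is precisely tailored so that $C_{\circ}^{1/8}\mu_t \leq \bar{\mu}_{\circ}$ whenever $\mu_t \leq C_\circ \mu_\circ$, guaranteeing $P_{k,\ell}(\bmu) \subseteq V_{k,\ell}$ and hence that all the intermediate points remain inside the domain of the diffeomorphism chain \eqref{eq: diff chain}.
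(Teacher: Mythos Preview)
Your proposal is essentially correct and follows the same inductive strategy as the paper for the lower inclusion (peel off one layer of the chain, propagate the neighbourhood condition to the next level via the factorisation $M_{k,t} = \gamma_{k+1}(M_{k+1,t})$, then recurse on depth). Your direct argument for the upper inclusion, using the splitting identity \eqref{eq: phi split} and a single mean value bound, is a clean simplification; the paper instead establishes both inclusions together by downward induction on $k$.

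One step in your lower-inclusion argument is not fully justified. You assert that ``for $\mu_\circ$ admissibly small the proximity forces $w_{k+1} \in W_{k+1}$'', and likewise implicitly assume that the realisers $v_t$ have parameters in $W_{k+1}$ so that the Lipschitz bound on $\Phi_{k,k+1}^{-1}$ applies. This need not hold: if $u$ sits near the tangential boundary of $W_k$, the nearest point in $M_{k,t}$ may have its parameter just outside $W_{k+1}$, and then $\Phi_{k,k+1}^{-1}$ is not available. The paper sidesteps this by exploiting the \emph{orthogonality} built into the construction: since
\[
u - \gamma_{k+1}(\tilde s_{k+1}) = G_{k+1}(\tilde s_{k+1})\,\eta_{k+1}^\top \in N_{\gamma_{k+1}(\tilde s_{k+1})} M_{k,k+1}
\]
and $|\eta_{k+1}| < \bar\mu_\circ$ places us in the tubular neighbourhood regime, one obtains $|\eta_{k+1}| = \dist(u, M_{k,k+1}) < \mu_{k+1}$ directly, with no need to locate an external nearest point. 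For the propagation, the paper then passes to the base point $u_0 := \gamma_{k+1}(\tilde s_{k+1})$ (whose parameter $\tilde s_{k+1}$ lies in $W_{k+1}$ automatically), uses the triangle inequality and the ordering of $\bmu$ to place $u_0 \in \cN_{2\mu_t} M_{k,t}$, and applies the Lipschitz bound on $\gamma_{k+1}^{-1}$ rather than on $\Phi_{k,k+1}^{-1}$; the former is globally defined on $\gamma_{k+1}(U_{k+1})$ by regularity of the parametrisation, so no domain issue arises. Your argument is easily repaired along these lines.
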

 
When $k = \ell$, the left- and right-hand sides of \eqref{eq: omega incl} are understood to equal $W_\ell$. For $\rho > 0$, it is convenient to write
 \begin{equation}\label{eq: neighbourhood notation}
     \fN_{k,\ell}(\bmu; \rho) := \bigcap_{t=k+1}^\ell\cN_{\rho \mu_t}M_{k,t}\cap W_k,
 \end{equation}
so that the left and right-hand sets in \eqref{eq: omega incl} correspond to $\fN_{k,\ell}(\bmu; 1)$ and $\fN_{k,\ell}(\bmu; C_{\circ})$.
 
 \begin{proof}[Proof (of Lemma~\ref{lem: Omega incl})]  For the purpose of this proof, we shall rescale $\bmu$ and temporarily redefine $P_{k,\ell}(\bmu):=\prod_{t=k+1}^\ell[-\mu_t,\mu_t]^{m_t}$. Let $C \geq 1$ be a fixed constant, chosen to satisfy the forthcoming requirements of the proof. Fixing $1 \leq \ell \leq r$, we shall show,  under the modified definition,
 \begin{equation}\label{eq: inductive hyp}
         \fN_{k,\ell}(\bmu; \rho_k^{-1}) \subseteq \Omega_{k,\ell}(\bmu) \subseteq \fN_{k,\ell}(\bmu; \rho_k) \qquad \textrm{for} \qquad \rho_k := C^{\ell - k}
     \end{equation}
 holds for all $1 \leq k \leq \ell$, using induction on $k$. Once this is established, we may simply take $C_{\circ} \geq  C^{8n}$ and replace $\bmu$ with $C_{\circ}^{1/8}\bmu$ to conclude the desired result. The claim holds vacuously for $k=\ell$, which acts as the base case. Now let $2 \leq k \leq \ell$ and suppose \eqref{eq: inductive hyp} holds for this value of $k$. It suffices to show
     \begin{equation}\label{eq: inductive incl}
          \fN_{k-1,\ell}(\bmu; C^{-1}\rho_k^{-1}) \subseteq\Omega_{k-1,\ell}(\bmu)\subseteq \fN_{k-1,\ell}(\bmu; C \rho_k).
     \end{equation}
     
     For $\rho > 0$, define the auxiliary sets
     \begin{equation*}
       \widetilde{\fN}_{k-1,\ell}(\bmu; \rho) :=  M_{k-1,k}\cap\bigcap_{t=k+1}^\ell\cN_{\rho \mu_t}M_{k-1,t}\cap W_{k-1}.
     \end{equation*}
     The proof of \eqref{eq: inductive incl} can be reduced to showing
    \begin{equation}\label{eq: gamma incl}
        \widetilde{\fN}_{k-1,\ell}(\bmu; 2C^{-1} \rho_k^{-1}) \subseteq \gamma_k(\Omega_{k,\ell}(\bmu)) \subseteq \widetilde{\fN}_{k-1,\ell}(\bmu; 2^{-1} C \rho_k).
    \end{equation}
    We establish this reduction in two stages.\medskip

    Assuming the first inclusion in \eqref{eq: gamma incl}, we show the first inclusion in \eqref{eq: inductive incl}. Let $x \in \fN_{k-1,\ell}(\bmu; C^{-1}\rho_k^{-1})$ and note, since $x\in W_{k-1}$, that there exists $s\in W_\ell$  and $ \bdeta_{k-1,\ell} = (\eta_k, \bdeta_{k,\ell}) \in V_{k-1, \ell}$ such that 
     \begin{equation}\label{eq: x to xzero}
         x=\Phi_{k-1,\ell}(s;\bdeta_{k-1,\ell}) =\gamma_k\circ\Phi_{k,\ell}(s;\bdeta_{k,\ell})+G_{k}\circ\Phi_{k,\ell}(s;\bdeta_{k,\ell})\eta_k^\top.
     \end{equation}
     where we have used the recursive definition \eqref{eq: phi def}. Observe that:
     \begin{itemize}
         \item $x_0:=\gamma_k \circ \Phi_{k,\ell}(s;\bdeta_{k,\ell})\in M_{k-1,k}$ and $x_0 = \Phi_{k-1, \ell}(s; (0,\bdeta_{k,\ell})) \in W_{k-1}$;
         \item $x\in \cN_{C^{-1} \rho_k^{-1} \mu_t} M_{k-1,t}$ for $k \leq t \leq \ell$;
         \item $x-x_0 = G_{k}\circ\Phi_{k,\ell}(s;\bdeta_{k,\ell})\eta_k^\top \in N_{x_0}M_{k-1,k}$. 
     \end{itemize}
     We therefore conclude that $|x-x_0| = |\eta_k|\leq  C^{-1}\rho_k^{-1}\mu_k$. By the ordering $\mu_1 \leq \dots \leq \mu_r$ and the triangle inequality, $x_0 \in \cN_{2 C^{-1}\rho_k^{-1}\mu_t}M_{k-1,t}$ for each $k\leq t\leq \ell$, and hence $x_0\in \widetilde{\fN}_{k-1,\ell}(\bmu; 2 C^{-1}\rho_k^{-1})$. The first inclusion in \eqref{eq: gamma incl} therefore implies that $x_0 \in \gamma_k(\Omega_{k,\ell}(\bmu))$. By the definition of $x_0$ and the injectivity of $\gamma_k$, we therefore deduce that $\Phi_{k,\ell}(s;\bdeta_{k,\ell}) \in \Omega_{k,\ell}(\bmu)$. By the injectivity of $\Phi_{k,\ell}$ and the definition of $\Omega_{k,\ell}(\bmu)$, we further deduce that $\bdeta_{k,\ell} \in P_{k,\ell}(\bmu)$. Since we have already shown $|\eta_k| \leq \mu_k$, we conclude that $\bdeta_{k-1,\ell} \in P_{k-1,\ell}(\bmu)$. Thus, \eqref{eq: x to xzero} implies $x \in \Omega_{k-1, \ell}(\bmu)$, as required.\medskip

    Assuming the second inclusion in \eqref{eq: gamma incl}, we show the second inclusion in \eqref{eq: inductive incl}. Let $x\in\Omega_{k-1,\ell}(\bmu)$, so that $x=\Phi_{k-1,\ell}(s;\bdeta_{k-1,\ell})$ for some $s\in W_\ell$ and $\bdeta_{k-1,\ell}\in P_{k-1,\ell}(\bmu)$. For $x_0:=\gamma_k \circ \Phi_{k,\ell}(s;\bdeta_{k,\ell})$ as before, we automatically have $|x-x_0| = |\eta_k| \leq \mu_k$. Furthermore, $x_0 \in \gamma_k(\Omega_{k,\ell})$ and so \eqref{eq: gamma incl} implies that $x_0\in \widetilde{\fN}_{k-1,\ell}(\bmu; 2^{-1}C\rho_k)$. By the ordering of $\bmu$ and the triangle inequality, provided $C$ is chosen sufficiently large we have $x\in \fN_{k-1,\ell}(\bmu; C \rho_k)$, as required.\medskip
     
     It remains to verify \eqref{eq: gamma incl}. Let $x \in \widetilde{\fN}_{k-1,\ell}(\bmu; 2 C^{-1}\rho_k^{-1})$. Since $x \in M_{k-1, k} \cap W_{k-1}$, there exists some $s_k \in W_k$ such that $x = \gamma_k(s_k)$. On the other hand, fixing $k+1\leq t\leq\ell$, there exists some $\tilde{x} \in M_{k-1,t}$ such that $|x - \tilde{x}| < 2 C^{-1}\rho_k^{-1} \mu_t$. Moreover, $\tilde{x} = \gamma_k(\tilde{s}_k)$ for some $\tilde{s}_k \in M_{k,t}$. We therefore have $|\gamma_k(s_k) - \gamma_k(\tilde{s}_k)| < 2 C^{-1}\rho_k^{-1} \mu_t$. Since the parametrisations $\gamma_k$ are regular, provided that $C \geq 1$ is chosen sufficiently large, it follows that $|s_{k}-\tilde{s}_{k}|< \rho_k^{-1}\mu_t$. This implies that $s_{k}\in\cN_{\rho_k^{-1}\mu_t}M_{k,t}$. Since this holds for all $k+1\leq t\leq\ell$, we have shown 
     \begin{equation*}
         \widetilde{\fN}_{k-1,\ell}(\bmu; 2 C^{-1}\rho_k^{-1}) \subseteq \gamma_{k}\big(\fN_{k,\ell}(\bmu; \rho_k^{-1})\big) \subseteq \gamma_k\big(\Omega_{k,\ell}(\bmu)\big),
     \end{equation*}
    where the last inequality is due to the induction hypothesis \eqref{eq: inductive hyp}.

    Now suppose $x \in \gamma_k\big(\Omega_{k,\ell}(\bmu)\big)$. By the induction hypothesis \eqref{eq: inductive hyp}, there exists some $s_k \in \fN_{k,\ell}(\bmu; C\rho_{k+1})=\fN_{k,\ell}(\bmu; \rho_{k})$ such that $x = \gamma_k(s_k)$. Fixing $k+1 \leq t \leq \ell$, it follows that $s_k\in \cN_{\rho_k\mu_t}M_{k,t}\cap W_{k}$ and so there exists $\tilde{s}_{k}\in M_{k,t}$ such that $|s_{k}-\tilde{s}_{k}| < \rho_k \mu_t$. By the mean value theorem, provided $C \geq 1$ is sufficiently large, $|\gamma_{k}(s_{k})-\gamma_{k}(\tilde{s}_{k})|< 2^{-1}C \rho_k \mu_t$. Since $\gamma_k(\tilde{s}_{k})\in M_{k-1,t}\cap W_{k-1}$, it therefore follows that $x \in \cN_{2^{-1}C \rho_k\mu_t}M_{k-1,t}\cap W_{k-1}$. Since this holds for all $k+1\leq t\leq\ell$, we have shown 
     \begin{equation*}
      \gamma_k\big(\Omega_{k,\ell}(\bmu)\big) \subseteq  \gamma_{k}\big(\fN_{k,\ell}(\bmu; \rho_k)\big) \subseteq  \widetilde{\fN}_{k-1,\ell}(\bmu; 2^{-1}C \rho_k),
     \end{equation*}
     which concludes the proof of \eqref{eq: gamma incl}.
\end{proof}

Define $\fN_{k,\ell}(\bmu) := \fN_{k,\ell}(\bmu; 1)$ and $\fN_{k,\ell}^+(\bmu) := \fN_{k,\ell}(\bmu; C_{\circ})$, where $C_{\circ}$ is the constant appearing in the statement of Lemma~\ref{lem: Omega incl}.

 \begin{corollary}\label{cor: chain of supp}
     Let $0\leq k\leq\ell\leq r$, and define the sets 
     \begin{equation*}
        \Omega_{k,\ell, r} (\bmu):=\Phi_{k,\ell}\big( \fN_{\ell,r}(\bmu) \times P_{k,\ell}(\bmu)\big) \quad \textrm{and} \quad \Omega_{k,\ell, r}^+ (\bmu):=\Phi_{k,\ell}\big( \fN_{\ell,r}^+(\bmu) \times P_{k,\ell}(\bmu)\big). 
     \end{equation*}
     Then 
     \begin{equation}\label{eq: chain of supp}
     \fN_{k,r}(\bmu) \subseteq \Omega_{k,\ell, r}^+(\bmu) \qquad \textrm{and} \qquad  \Omega_{k,\ell, r}(\bmu) \subseteq  \fN_{k,r}^+(\bmu)
     \end{equation}
     for all $\bmu = (\mu_1, \dots ,\mu_\ell)$ such that $0 < \mu_1 \leq \cdots \leq \mu_r\leq C_{\circ}\mu_\circ$.
 \end{corollary}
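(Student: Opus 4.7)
The plan is to derive both inclusions by combining the composition formula \eqref{eq: phi split} with Lemma~\ref{lem: Omega incl} applied at three different ``levels'': the full chain $(k,r)$, the upper chain $(k,\ell)$, and the lower chain $(\ell,r)$. The boundary cases $k=\ell$ (where $\Phi_{k,k}$ is the identity and $P_{k,k}(\bmu)$ is trivial, so $\Omega_{k,k,r}(\bmu) = \fN_{k,r}(\bmu)$) and $\ell = r$ (where $\fN_{r,r}(\bmu) = W_r$, so $\Omega_{k,r,r}(\bmu) = \Omega_{k,r}(\bmu)$) both reduce immediately to Lemma~\ref{lem: Omega incl} itself; hence one may assume $0 \leq k < \ell < r$ throughout.

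For the first inclusion $\fN_{k,r}(\bmu) \subseteq \Omega_{k,\ell,r}^+(\bmu)$, I would take $x \in \fN_{k,r}(\bmu)$. By the first inclusion of Lemma~\ref{lem: Omega incl} at level $(k,r)$, there exist $s_r \in W_r$ and $\bdeta_{k,r} \in P_{k,r}(\bmu)$ with $x = \Phi_{k,r}(s_r;\bdeta_{k,r})$. Writing $\bdeta_{k,r} = (\bdeta_{k,\ell},\bdeta_{\ell,r})$ and invoking the splitting identity \eqref{eq: phi split} yields $x = \Phi_{k,\ell}(s_\ell;\bdeta_{k,\ell})$ where $s_\ell := \Phi_{\ell,r}(s_r;\bdeta_{\ell,r}) \in \Omega_{\ell,r}(\bmu)$. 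Applying the second inclusion of Lemma~\ref{lem: Omega incl} at level $(\ell,r)$ gives $s_\ell \in \fN_{\ell,r}^+(\bmu)$. Combined with $\bdeta_{k,\ell} \in P_{k,\ell}(\bmu)$, this places $x$ in $\Phi_{k,\ell}\big(\fN_{\ell,r}^+(\bmu) \times P_{k,\ell}(\bmu)\big) = \Omega_{k,\ell,r}^+(\bmu)$, as required.

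For the second inclusion $\Omega_{k,\ell,r}(\bmu) \subseteq \fN_{k,r}^+(\bmu)$, I would reverse the argument. Any $x \in \Omega_{k,\ell,r}(\bmu)$ can be written as $x = \Phi_{k,\ell}(s_\ell;\bdeta_{k,\ell})$ with $s_\ell \in \fN_{\ell,r}(\bmu)$ and $\bdeta_{k,\ell} \in P_{k,\ell}(\bmu)$. The first inclusion of Lemma~\ref{lem: Omega incl} at level $(\ell,r)$ then provides $s_r \in W_r$ and $\bdeta_{\ell,r} \in P_{\ell,r}(\bmu)$ such that $s_\ell = \Phi_{\ell,r}(s_r;\bdeta_{\ell,r})$. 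A second application of \eqref{eq: phi split} yields $x = \Phi_{k,r}(s_r;(\bdeta_{k,\ell},\bdeta_{\ell,r})) \in \Omega_{k,r}(\bmu)$, and the second inclusion of Lemma~\ref{lem: Omega incl} at level $(k,r)$ then gives $x \in \fN_{k,r}^+(\bmu)$.

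No new perturbative or geometric estimates are needed; the entire proof is a formal manipulation of the definitions using \eqref{eq: phi split} and the sandwich relation from Lemma~\ref{lem: Omega incl}. The only real obstacle is bookkeeping—keeping track of which set of indices $(k,\ell)$, $(\ell,r)$, or $(k,r)$ the current parameters live in, and noting that the constant $C_\circ$ from Lemma~\ref{lem: Omega incl} threads through both inclusions without needing to be iterated or enlarged.
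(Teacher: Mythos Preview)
Your proposal is correct and follows essentially the same route as the paper: the paper first records the identity $\Omega_{k,r}(\bmu)=\Phi_{k,\ell}\big(\Omega_{\ell,r}(\bmu)\times P_{k,\ell}(\bmu)\big)$ from \eqref{eq: phi split} and then chains the inclusions of Lemma~\ref{lem: Omega incl} at levels $(k,r)$ and $(\ell,r)$, which is exactly your argument written at the level of sets rather than individual points.
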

 
 \begin{proof}
     The formula \eqref{eq: phi split} implies that
     \begin{equation*}
         \Omega_{k,r}(\bmu)=\Phi_{k,\ell}\big(\Omega_{\ell,r}(\bmu)\times P_{k,\ell}(\bmu)\big).
     \end{equation*}
     The set inclusions \eqref{eq: chain of supp} then follow from several applications of Lemma~\ref{lem: Omega incl}. In particular,
    \begin{equation*}
    \fN_{k,r}(\bmu) \subseteq \Omega_{k,r}(\bmu) = \Phi_{k,\ell}\big(\Omega_{\ell,r}(\bmu)\times P_{k,\ell}(\bmu)\big) \subseteq \Phi_{k,\ell}\big( \fN_{\ell,r}^+(\bmu) \times P_{k,\ell}(\bmu) \big) = \Omega_{k,\ell, r}^+(\bmu),
    \end{equation*}
    and similarly 
    \begin{equation*}
    \Omega_{k,\ell, r}(\bmu) = \Phi_{k,\ell}\big( \fN_{\ell,r}(\bmu) \times P_{k,\ell}(\bmu) \big) \subseteq  \Phi_{k,\ell}\big(\Omega_{\ell,r}(\bmu)\times P_{k,\ell}(\bmu)\big)  = \Omega_{k,r}(\bmu) \subseteq \fN_{k,r}^+(\bmu),
    \end{equation*}
    as required. 
 \end{proof}




\subsection{Slice formula}\label{subsec: slice formula} 
Continuing with the setup from \S\ref{subsec: parametrising}, let $E_S$ be an extension operator associated to $S$ with amplitude $a \in C^{\infty}_c(U)$ chosen to have support in the open set $W := W_0$. Suppose $f \in L^2(S)$ satisfies $\supp f \subseteq \cN_{\bmu} \cS$ so that, by Lemma~\ref{lem: Omega incl}, we have $\supp (f\cdot a) \subseteq \fN_{0,\ell}(\bmu) \subseteq \Omega_{0,\ell}(\bmu)$ for $1 \leq \ell \leq r$.  Applying the change of variables
\begin{equation*}
  \Phi_\ell:=\Phi_{0,\ell}:W_{\ell}\times  V_{0,\ell} \rightarrow W, 
\end{equation*}
and setting $P_{\ell}(\bmu):=P_{0,\ell}(\bmu)$, we obtain 
\begin{equation}\label{eq: slice formula}
    E_S f(x) = C\int_{P_\ell(\bmu)} e^{i x \cdot \Sigma \circ \Phi_{\ell}(0;\bdeta)} E_{S_\ell(\bdeta)} f_{\ell,\bdeta}(x)\,\ud \bdeta
\end{equation}
where $f_{\ell,\bdeta}(s) := f \circ \Phi_{\ell}(s;\bdeta)$ and $E_{S_\ell(\bdeta)}$ is an extension operator associated to the $d_\ell$-dimensional submanifold $S_\ell(\bdeta) := \Sigma_{\ell,\bdeta}(W_\ell)$  for
\begin{equation*}
    \Sigma_{\ell,\bdeta}(s) := \Sigma\circ\Phi_{\ell}(s;\bdeta) - \Sigma\circ\Phi_{\ell}(0;\bdeta) \quad \textrm{for $s \in W_\ell$ and $\bdeta \in P_\ell(\bmu)$.}
\end{equation*} 
In particular,
\begin{equation}\label{eq: slice extension}
  E_{S_\ell(\bdeta)} g(x) = \int_{W_\ell} e^{i x \cdot \Sigma_{\ell,\bdeta}(s)} g(s) a_{\ell, \bdeta}(s)\,\ud s
\end{equation}
for $a_{\ell, \bdeta}(s) := a\circ\Phi_\ell(s;\bdeta) C^{-1}J_{\bdeta}(s)$, where $J_{\bdeta}(s)$ denotes the Jacobian factor arising from the change of variables and $C \geq 1$ is an admissible constant, chosen to ensure $0 \leq a_{\ell,\bdeta}(s) \leq 1$ for all $s \in W_\ell$. 

By Corollary~\ref{cor: chain of supp}, we have $\supp (f\cdot a) \subseteq \fN_{0,\ell}(\bmu) \subseteq \Omega_{0, \ell, r}^{+}(\bmu)$, and so 
\begin{equation}\label{eq: support slice 1}
    \supp (f_{\ell,\bdeta} \cdot a_{\ell,\bdeta}) \subseteq \fN_{\ell, r}^+(\bmu) \qquad \textrm{for each} \qquad \bdeta \in P_{\ell}(\bmu).
\end{equation}

We refer to \eqref{eq: slice formula} as the \textit{slice formula} for $E_Sf$. We observe the following properties of the slice formula:
\begin{itemize}
    \item $S_\ell(0) = \Sigma \circ \sigma_\ell(W_\ell) \subseteq S_\ell$ is a codimension $0$ submanifold of $S_\ell$. 
    \item Since $f \in L^2(S)$, we have that $f_{\ell,\bdeta} \in L^2(S_\ell)$ for almost every $\bdeta \in P_\ell(\bmu)$. Furthermore, by \eqref{eq: support slice 1}, we have 
\begin{equation}\label{eq: support slice 2}
    \supp  (f_{\ell,\bdeta} \cdot a_{\ell,\bdeta}) \subseteq \cN_{\min\{C_\circ\mu_k,\mu_\circ\}} S_k \quad  \text{ for all $\ell+1 \leq k \leq r$,}
\end{equation}
where the minimum is guaranteed provided $\rho(E_S)$ is sufficiently small. 
\end{itemize}
Here we are using the support (abuse of) notation introduced in \S\ref{subsec: background}. 



\subsection{Local constancy of the slices} Inequalities such as \eqref{eq: local multlinear} are local in the sense that the left-hand $L^{q_k}$-norm is localised to the cube $Q_R := [-R, R]^n$. Consequently, by the uncertainty principle, we should expect our functions to be locally constant at scale $R^{-1}$ in frequency space. In particular, continuing with the setup of the previous subsection, it should be possible to remove the dependence on $\bdeta$ in the $S_\ell(\bdeta)$ when $\bdeta \in P_\ell(\bmu)$ for $\bmu = (\mu_i)_{i=1}^r$ with $0 < \mu_1 \leq \dots \leq \mu_\ell < R^{-1}$. 

To implement the locally constant property rigorously, we go back to our formula for $E_{S_\ell(\bdeta)}$ from \eqref{eq: slice extension} and suppose $\mu_\ell = \max_{i \leq \ell} \mu_{i} < \min\{R^{-1},   C_{\circ}\mu_{\circ}\}$. The phase is given by
\begin{equation*}
    x \cdot \Sigma_{\ell,\bdeta}(s) =  x \cdot \Sigma_{\ell,0}(s) + x \cdot \cE(s;\bdeta)
\end{equation*}
where, by the mean value theorem, $\cE(s,\bdeta)$ satisfies $|\cE(s;\bdeta)| \lesssim |\bdeta|_{\infty} \lesssim R^{-1}$ for all $s \in \supp a_{\ell,\bdeta}$ and all $\bdeta \in P_\ell(\bmu)$. Thus, for $x \in Q_R$, the function $e^{ix \cdot \cE(s;\bdeta)}$ is essentially non-oscillatory and can therefore be removed. More precisely, letting $C \geq 1$ denote a large, admissible constant, by power series expansion
\begin{equation*}
    e^{ix\cdot\cE(s;\bdeta)} = \sum_{\alpha \in \N_0^n} \prod_{j=1}^n \frac{(iCR^{-1}x_j)^{\alpha_j}}{\alpha_j!} (C^{-1}R\cE_j(s;\bdeta))^{\alpha_j}
\end{equation*}
For $x \in Q_R$, we may therefore write\footnote{Here the extension operator $E_{S_\ell}$ is defined as in \eqref{eq: extension def}, with an implicit amplitude $a$ chosen so that \eqref{eq: extension approximation} holds. In particular, we choose this amplitude such that it is identically $1$ on the support of $a_{\ell,\bdeta}$ and with support contained in $W_\ell$.}
\begin{equation}\label{eq: extension approximation}
    E_{S_\ell(\bdeta)}g(x) = \sum_{\alpha \in \N_0^n} B_{\alpha}(x) E_{S_\ell} \big[a_{\bdeta}^{\alpha} g \big](x)
\end{equation}
where, provided $C \geq 1$ is chosen sufficiently large, 
\begin{equation*}
    a_{\bdeta}^{\alpha}(s) := a_{\ell,\bdeta}(s) \prod_{j=1}^n(C^{-1}R\cE_j(s;\bdeta))^{\alpha_j}
\end{equation*} 
satisfies $|a_{\bdeta}^{\alpha}(s)| \leq 1$, and 
\begin{equation*}
    B_{\alpha}(x) := \prod_{j=1}^n\frac{(iCR^{-1}x_j)^{\alpha_j}}{\alpha_j!} \quad \textrm{and} \quad b_{\alpha} := \prod_{j=1}^n \frac{C^{\alpha_j}}{\alpha_j!}
\end{equation*}
satisfy
$|B_{\alpha}(x)| \leq b_{\alpha}$, so that $\sum_{\alpha \in \N_0^n} |B_{\alpha}(t)| \leq e^{nC}$. Thus, by Cauchy--Schwarz,
\begin{equation}\label{eq: loc const}
   |E_{S_\ell(\bdeta)}g(x)|^2 \lesssim  \sum_{\alpha \in \N_0^n} b_{\alpha} |E_{S_\ell} \big[a_{\bdeta}^{\alpha} g\big](x)|^2, \qquad x \in Q_R.
\end{equation}
Since the sequence $b_{\alpha}$ decreases rapidly, \eqref{eq: loc const} effectively bounds $|E_{S_\ell(\bdeta)}g(x)|$ by $|E_{S_\ell}g(x)|$, and therefore is a rigorous interpretation of the locally constant property.




\subsection{Wavepacket decomposition} \label{subsec: wp} Here we construct a variant of the classical wavepacket decomposition adapted to our nested geometry.

\medskip
\noindent \underline{The derivative of $\Phi_{\ell}$.} We first compute the differential $D\Phi_{\ell}|_{(s;\bzero)}$, which is a $ d \times d$ matrix. By first applying the chain rule to the recursive formula \eqref{eq: phi def}, we have 
\begin{equation}\label{eq: Phi differential}
    D\Phi_{k-1,\ell}|_{(s;\bzero_{k-1,\ell})}
    \begin{pmatrix}
        v_\ell \\
        \bw_{k-1,\ell}
    \end{pmatrix}
    =D\gamma_k|_{\gamma_{k,\ell}(s)}D\Phi_{k,\ell}|_{(s;\bzero_{k,\ell})}
    \begin{pmatrix}
        v_\ell \\
        \bw_{k,\ell}
    \end{pmatrix}
    +G_k\big(\gamma_{k,\ell}(s)\big)w_k
\end{equation}
for all $1\leq k\leq \ell\leq r$,  $s \in U_\ell$, $v_{\ell} \in \R^{d_{\ell}}$ and $\bw_{k-1,\ell} = (w_k, \bw_{k,\ell}) \in \R^{m_k} \times \R^{c_\ell - c_k}$. Here we have used \eqref{eq: Phi vs gamma}. 

Let $\partial_s\sigma_{\ell}(s)$ denote the Jacobian matrix of $\sigma_{\ell}$ at $s$ and $B_{k,\ell}(s)\in \mathrm{Mat}(\R, d\times m_{k})$ denote the composition of matrices
\begin{align*}
  B_{k,\ell}(s):&= D\gamma_1|_{\gamma_{1,\ell}(s)} \circ \cdots \circ D\gamma_{k-1}|_{\gamma_{k-1,\ell}(s)} \circ G_k\big(\gamma_{k,\ell}(s)\big), \\
 &=\partial_s\sigma_{k-1}\big(\gamma_{k-1,\ell}(s)\big) G_k\big(\gamma_{k,\ell}(s)\big),
\end{align*}
for $1\leq k\leq\ell$, where here $B_{1,\ell}(s):=G_1\circ\gamma_{1,\ell}(s)$ and the second equality is due to the chain rule. Furthermore, let $\bB_{\ell}(s)\in\mathrm{Mat}(\R, d \times c_\ell)$ denote the block matrix
\begin{equation}\label{eq: normal matrices}
    \bB_{\ell}(s):=
    \begin{bmatrix}
      B_{1, \ell}(s) & B_{2, \ell}(s) & \cdots & B_{\ell,\ell}(s)  
    \end{bmatrix}.
\end{equation}
With these definitions, repeated application of \eqref{eq: Phi differential} yields
\begin{equation}\label{eq: derivative formula}
D\Phi_{\ell}|_{(s;\bzero)}
\begin{pmatrix}
v_\ell \\
\bw_{\ell}    
\end{pmatrix}
= \partial_s\sigma_{\ell}(s)v_\ell+\sum_{k=1}^\ell B_{k,\ell}(s)w_k=\partial_s\sigma_{\ell}(s)v_\ell+\bB_{\ell}(s)\bw_{\ell}
\end{equation}
for $s \in U_{\ell}$, $v_{\ell} \in \R^{d_{\ell}}$ and $\bw_{\ell} \in \R^{d - d_{\ell}}$. Let $b_{j,\ell}(s)$ denote the $j$th column of $\bB_{\ell}(s)$ and $V_{k, \ell}(s) := \mathrm{span}\, \{b_{1,\ell}(s), \dots, b_{c_k,\ell}(s)\}$ for $1 \leq k \leq \ell$. By the non-degeneracy of the parametrisations, 
\begin{equation}
    \bigg|\bigwedge_{j = 1}^{c_\ell}b_{j,\ell}(s)\bigg|\gtrsim 1, \qquad |b_{j,\ell}(s)|\lesssim 1,\qquad \big|V_{k,\ell}(s)\wedge T_{\sigma_{\ell}(s)}M_k\big|\gtrsim 1 \label{eq: b uniform}
\end{equation}
for all $1\leq k \leq \ell\leq r$, $1\leq j\leq c_\ell$, and $s \in U_\ell$.
\medskip

\noindent \underline{Anisotropic decomposition.} We continue with the nested family $\cS = (S, (S_{\ell})_{\ell = 1}^r)$ from \S\ref{subsec: parametrising}. Let $R \geq 1$ and suppose $\bmu = (\mu_1, \dots, \mu_r)$ with $R^{-1} \leq \mu_1 \leq \dots \leq \mu_r \leq \mu_{\circ}$. Note that the upper bound is slightly stronger than that appearing in the setup in previous subsections, whilst the lower bound corresponds to the regime in which we do not have access to the local constancy property \eqref{eq: loc const}. Set 
\begin{equation*}
    \cL_{R^{1/2},\bmu} := \big\{1 \leq \ell \leq r : \mu_{\ell} > R^{-1/2} \big\} \quad \textrm{and} 
    \quad \ell_{R^{1/2},\bmu} := 
    \begin{cases}
        \min \cL_{R^{1/2},\bmu} - 1 & \textrm{if $\cL_{R^{1/2}} \neq \emptyset$,} \\
        r & \textrm{otherwise},
    \end{cases}
\end{equation*}
so that if $\cL_{R^{1/2},\bmu} \neq \emptyset$, then $\cL_{R^{1/2},\bmu} = \{\ell_{R^{1/2},\bmu} + 1, \dots, r\}$. For the ease of notation in this section, we set $\ell_{\star} :=\ell_{R^{1/2},\bmu}$. We define a $d \times d$ diagonal matrix
\begin{equation*}
D_{R^{1/2},\bmu} := 
\begin{bmatrix}
     \mu_1 I_{m_{1}}  & \cdots & \bzero & \bzero \\
     \vdots & \ddots & \vdots & \vdots \\
     \bzero &  \cdots & \mu_{\ell_{\star}} I_{m_{\ell_{\star}}} &  \bzero \\
     \bzero &  \cdots & \bzero &  R^{-1/2} I_{d_{\star}}
\end{bmatrix},
\end{equation*}
where $d_{\star} := d - \sum_{\ell=1}^{\ell_{\star}}m_{\ell}= \dim(S_{\ell_{\star}})$.  
Finally, for $\bB_\ell(s)$ as in \eqref{eq: normal matrices}, define the $d \times d$ matrices
\begin{equation}\label{eq: Lambda def 1}
   \Lambda_{\ell}(s) := \begin{bmatrix}
        \bB_\ell(s) & \displaystyle\frac{\partial \sigma_{\ell}}{\partial s}(s) 
    \end{bmatrix}, \qquad s \in W_{\ell},
\end{equation}
which, by \eqref{eq: b uniform}, are invertible for $1 \leq \ell \leq r$, and let
\begin{equation}\label{eq: Lambda def 2}
   \Lambda_{R, \bmu}(s) := 
   \begin{cases}
      C_\circ^{1/2}\Lambda_{\ell_{\star}}(s) \circ
    D_{R^{1/2},\bmu} & \textrm{if $1 \leq \ell_{\star} \leq r$,} \\
    C_\circ^{1/2}R^{-1/2}I_d & \textrm{if $\ell_{\star} = 0$,}
   \end{cases}
\end{equation}
noting that $D_{R^{1/2},\bmu} = R^{-1/2}I_d$ in the $\ell_{\star} = 0$ case. 

Define a grid of points
\begin{equation*}
    \Gamma_{R,\bmu}(\cS) := R^{-1/2}\Z^{d_{\star}} \cap  \fN_{\ell_{\star}, r}(\bmu; 2C_{\circ})
\end{equation*}
where $\fN_{\ell_{\star}, r}(\bmu; 2C_{\circ})$ is as defined in \eqref{eq: neighbourhood notation} and $C_\circ \geq 1$ is an admissible constant, chosen large enough so that the conclusion of Lemma~\ref{lem: Omega incl} holds. We form a covering of
\begin{equation*}
  \fN_r(\bmu) := \fN_{0, r}(\bmu) = \cN_{\mu_1}M_1 \cap \cdots \cap \cN_{\mu_r}M_r \cap W  
\end{equation*}
by parallelepipeds. Let $\Theta_{R,\bmu}(\cS)$ denote the family of all parallelepipeds of the form
\begin{equation}\label{eq: Lambda def 3}
    \theta = u_{\theta} + \Lambda_{\theta}([-1,1]^{d}) \quad \textrm{where $u_{\theta} := \sigma_{\ell_{\star}}(s_{\theta})$ and $\Lambda_{\theta} := \Lambda_{R, \bmu}(s_{\theta})$}
\end{equation}
for $s_{\theta} \in \Gamma_{R,\bmu}(\cS)$. Here $\sigma_{0}:=\mathrm{Id}$.

\begin{lemma}\label{lem: wp} Provided $C_{\circ} \geq 1$ is sufficiently large, the following properties hold:
\begin{enumerate}[i)]
    \item The collection $\Theta_{R,\bmu}(\cS)$ forms a cover of $\fN_r(\bmu)$.
    \item The scaled parallelepipeds $\{4 \cdot \theta : \theta \in \Theta_{R,\bmu}(\cS)\}$ are finitely-overlapping. 
    \item $4 \cdot \theta \cap W \subseteq \fN_r(C_{\circ}^2 \bmu)$ for all $\theta \in \Theta_{R,\bmu}(\cS)$.
\end{enumerate}
\end{lemma}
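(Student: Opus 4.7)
The plan is to use the diffeomorphism $\Phi_{0,\ell_\star} \colon W_{\ell_\star} \times V_{0,\ell_\star} \to W$ to relate each parallelepiped $\theta$ to a standard box in $(s,\bdeta)$-coordinates. By construction, $\theta$ is (up to the factor $C_\circ^{1/2}$) the image under the linearisation $D\Phi_{0,\ell_\star}|_{(s_\theta,0)}$ of a box of scale $R^{-1/2}$ in the $s$-coordinate and $\mu_k$ in each $\eta_k$-coordinate. The decisive observation that makes the argument clean is that $\mu_k \leq R^{-1/2}$ for all $k \leq \ell_\star$ (by definition of $\ell_\star$), so the deviation of $\Phi_{0,\ell_\star}$ from its linearisation over such a box is $O(R^{-1})$, and this in turn is dominated by the smallest scale $C_\circ^{1/2}\mu_1 \geq C_\circ^{1/2}R^{-1}$ in the parallelepiped.

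For part (i), given $u \in \fN_r(\bmu)$, Corollary~\ref{cor: chain of supp} applied with $k=0$ and $\ell = \ell_\star$ yields a representation $u = \Phi_{0,\ell_\star}(t;\bdeta)$ with $t \in \fN_{\ell_\star,r}^+(\bmu)$ and $\bdeta \in P_{\ell_\star}(\bmu)$. Pick $s_\theta \in R^{-1/2}\Z^{d_\star}$ closest to $t$; the inequality $R^{-1/2} \leq \mu_k$ for $k > \ell_\star$ with a triangle inequality confirms $s_\theta \in \Gamma_{R,\bmu}(\cS)$. Taylor-expand $u - u_\theta$ about $(s_\theta, 0)$ using \eqref{eq: derivative formula}: the leading part is $\partial_s\sigma_{\ell_\star}(s_\theta)(t-s_\theta) + \sum_k B_{k,\ell_\star}(s_\theta)\eta_k$ and the error $E$ satisfies $|E| = O((|t-s_\theta| + |\bdeta|)^2) = O(R^{-1})$. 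Decomposing $u - u_\theta$ in the uniformly well-conditioned frame $\Lambda_{\ell_\star}(s_\theta)$ (by \eqref{eq: b uniform}) and solving $u - u_\theta = \Lambda_\theta v$, the coefficient bounds $|\eta_k| \leq C_\circ^{1/8}\mu_k$ and $|t - s_\theta|_\infty \leq R^{-1/2}/2$ give leading entries of $v$ bounded by $C_\circ^{-3/8}$, while the Taylor remainder contributes at most $O(R^{-1})/(C_\circ^{1/2}\mu_1) = O(C_\circ^{-1/2})$. Choosing $C_\circ$ large enough yields $|v|_\infty \leq 1$.

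For part (ii), pulling back $4\theta$ via $\Phi_{0,\ell_\star}^{-1}$ produces an approximate box of dimensions $O(C_\circ^{1/2}R^{-1/2})$ in $s$ and $O(C_\circ^{1/2}\mu_k)$ in $\eta_k$, centred at $(s_\theta, 0)$. Since the $s$-grid has spacing $R^{-1/2}$ and the $\bdeta$-centres are all $0$, any fixed $(s', \bdeta')$ lies in at most $O(C_\circ^{d_\star/2})$ such boxes, which is admissibly bounded. For part (iii), write $u' \in 4\theta \cap W$ as $u' = u_\theta + \Lambda_\theta v$ with $|v|_\infty \leq 4$. For $t \leq \ell_\star$, decompose the displacement along the frame: since $M_{\ell_\star} \subseteq M_t$ and $T_{u_\theta}M_{k-1} \subseteq T_{u_\theta}M_t$ whenever $k \geq t+1$, the contributions from $\partial_s\sigma_{\ell_\star}$ and from $B_{k,\ell_\star}$ with $k \geq t+1$ are tangent to $M_t$, yielding only curvature-induced normal displacement of size $O(C_\circ R^{-1}) \leq O(C_\circ \mu_t)$; the remaining contributions from $B_{k,\ell_\star}$ with $k \leq t$ have size $O(C_\circ^{1/2}\mu_k) \leq O(C_\circ^{1/2}\mu_t)$. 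For $t > \ell_\star$, combine $s_\theta \in \cN_{2C_\circ\mu_t}M_{\ell_\star,t}$ with $|u' - u_\theta| = O(C_\circ^{1/2}R^{-1/2}) \leq O(C_\circ^{1/2}\mu_t)$. In all cases $u' \in \cN_{C_\circ^2\mu_t}M_t$ provided $C_\circ$ is large.

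The main obstacle is organisational rather than conceptual: keeping track of the constants $C_\circ^{1/8}$, $C_\circ^{1/2}$, and $C_\circ^2$ (inherited from the threshold width $\mu_\circ = C_\circ^{-9/8}\bar{\mu}_\circ$) and verifying they interlock correctly. The only substantive analytic input is the bound $\mu_k \leq R^{-1/2}$ for $k \leq \ell_\star$, which tames the Taylor remainder; once this is in hand, the three parts reduce to routine linear-algebra and change-of-variables bookkeeping.
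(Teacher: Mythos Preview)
Your argument is correct. Parts i) and ii) track the paper's proof closely; for ii) the paper takes a slightly shorter path, observing directly that $4\cdot\theta$ has diameter $O(R^{-1/2})$ (since all scales in $D_{R^{1/2},\bmu}$ are at most $R^{-1/2}$), so $4\cdot\theta_1\cap 4\cdot\theta_2\neq\emptyset$ forces $|u_{\theta_1}-u_{\theta_2}|_\infty\lesssim R^{-1/2}$, and then $|s_{\theta_1}-s_{\theta_2}|_\infty\lesssim R^{-1/2}$ since $\sigma_{\ell_\star}$ is a diffeomorphism onto its image. No pull-back of the full parallelepiped is needed.

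The genuine divergence is in part iii) for $1\leq t\leq\ell_\star$. The paper does not argue via the tangent-space decomposition you use. Instead, given $u'\in 4\cdot\theta\cap W$ it writes $u' = \Phi_{\ell_\star}(s;\bdeta) - \cE_\theta(s;\bdeta)$ with $\bdeta\in P_{\ell_\star}(4C_\circ^{3/8}\bmu)$ and $|\cE_\theta|\lesssim C_\circ R^{-1}$, then uses that $\Phi_{\ell_\star}$ is a diffeomorphism to write $u' = \Phi_{\ell_\star}(\tilde s;\tilde\bdeta)$, applies the mean value theorem to $\Phi_{\ell_\star}^{-1}$ to obtain $|\tilde\bdeta-\bdeta|_\infty\lesssim C_\circ R^{-1}$, concludes $\tilde\bdeta\in P_{\ell_\star}(C_\circ\bmu)$, and finally invokes Lemma~\ref{lem: Omega incl} to pass from $u'\in\Omega_{0,\ell_\star}(C_\circ\bmu)$ to $u'\in\fN_{\ell_\star}(C_\circ^2\bmu)$. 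Your route---noting that the columns of $B_{k,\ell_\star}(s_\theta)$ lie in $T_{u_\theta}M_{k-1}\subseteq T_{u_\theta}M_t$ for $k\geq t+1$, so only $B_{k,\ell_\star}$ with $k\leq t$ contribute first-order normal displacement $O(C_\circ^{1/2}\mu_t)$, while the tangential part of size $O(C_\circ^{1/2}R^{-1/2})$ contributes only curvature error $O(C_\circ R^{-1})\leq O(C_\circ\mu_t)$---is more directly geometric and bypasses the appeal to Lemma~\ref{lem: Omega incl}. The paper's approach has the virtue of reusing machinery already in place; yours makes the mechanism more visible.
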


\begin{proof}
If $\ell_{\star}=0$, then the $\Theta_{R,\bmu}(\cS)$ is a standard covering by $R^{1/2}$ cubes, and the desired properties are immediate. We therefore assume $1 \leq \ell_{\star} \leq r$.\medskip

\noindent i) By Corollary~\ref{cor: chain of supp}, it suffices to show that $\Theta_{R,\bmu}(\cS)$ forms a cover of $\Omega_{0, \ell_{\star}, r}^+(\bmu)$. Fix $x \in \Omega_{0, \ell_{\star}, r}^+(\bmu)$ so that 
\begin{equation*}
  x = \Phi_{0, \ell_{\star}}(s; \bdeta) = \Phi_{\ell_{\star}}(s; \bdeta) \quad \textrm{for some $s \in \fN_{\ell_{\star}, r}^+(\bmu)$ and $\bdeta \in P_{0,\ell_{\star}}(\bmu) = P_{\ell_{\star}}(\bmu)$.}
\end{equation*}
Since $\fN_{\ell_{\star}, r}^+(\bmu)=\fN_{\ell_{\star}, r}(\bmu; C_\circ)$,  there exists some $s_{\theta} \in \Gamma_{R,\bmu}(\cS)$ such that $|s - s_{\theta}|_{\infty} < R^{-1/2}$. The problem is therefore reduced to showing $\Phi_{\ell_{\star}}(s;\bdeta) \in \theta$, provided $C_{\circ} \geq 1$  is sufficiently large.

For $u_{\theta} := \sigma_{\ell_{\star}}(s_{\theta}) =\Phi_{\ell_{\star}}(s_\theta; \mathbf{0})$, write
\begin{equation*}
 \Phi_{\ell_{\star}}(s;\bdeta) - u_{\theta} = \bB_{\ell_{\star}}(s_{\theta})\bdeta + \partial_s\sigma_{\ell_{\star}}(s_{\theta})(s-s_{\theta}) +  \cE_{\theta}(s;\bdeta) 
 \end{equation*}
 where
\begin{equation}\label{eq: error}
 \cE_{\theta}(s;\bdeta) :=    \Phi_{\ell_\star}(s;\bdeta) -  \sigma_{\ell_{\star}}(s_{\theta}) - \partial_s\sigma_{\ell_{\star}}(s_{\theta})(s-s_{\theta}) - \bB_{\ell_{\star}}(s_{\theta})\bdeta.
\end{equation}
By combining the definitions \eqref{eq: Lambda def 1}, \eqref{eq: Lambda def 2} and \eqref{eq: Lambda def 3}, we have
\begin{equation*}
 \Phi_{\ell_{\star}}(s;\bdeta) - u_{\theta} = \Lambda_{\theta}(\bxi + \bzeta) \qquad \textrm{where} \qquad \bzeta := \big(\Lambda_{\theta}\big)^{-1}\cE_{\theta}(s;\bdeta)
\end{equation*}
and 
\begin{equation*}
    \bxi := C_\circ^{-1/2}\big(D_{R^{1/2}, \bmu}\big)^{-1}
    \begin{bmatrix}
        \bdeta \\
        s - s_{\theta}
    \end{bmatrix} \in [-1/2,1/2]^d;
\end{equation*}
here we are assuming that $C_{\circ}^{3/8} \geq 2$ and using the definition of $P_{\ell_{\star}}(\bmu)$. 
 Since $\mu_k \leq R^{-1/2}$ for $1 \leq k \leq \ell_{\star}$, by \eqref{eq: derivative formula}, \eqref{eq: Phi vs gamma} and Taylor's theorem, we have
\begin{equation}\label{eq: error est}
    |\cE_{\theta}(s;\bdeta)| \lesssim |s - s_{\theta}|_{\infty}^2 + |\bdeta|_{\infty}^2 \lesssim C_{\circ}^{1/4}R^{-1}.
\end{equation}
Thus, since we also have $\mu_k \geq R^{-1}$ for $1 \leq k \leq \ell_{\star}$,  we can ensure that $\bzeta \in [-1/2,1/2]^d$ provided that $C_{\circ}\geq 1$ is sufficiently large.

Combining the above observations, $\Phi_{\ell_{\star}}(s;\bdeta) = u_{\theta}+  \Lambda_{\theta}(\bxi + \bzeta)$ where $\bxi + \bzeta \in [-1,1]^d$ and so $\Phi_{\ell_{\star}}(s;\bdeta) \in \theta$, as required. \medskip

\noindent ii) The finite overlap property is immediate. Indeed, if $\theta_1$, $\theta_2 \in \Theta_{R, \bmu}(\cS)$ are such that $4 \cdot \theta_1 \cap 4 \cdot \theta_2 \neq \emptyset$, then it follows that the corresponding centres $u_{\theta_i} = \sigma_{\ell_{\star}}(s_{\theta_i})$ satisfy $|u_{\theta_1} - u_{\theta_2}|_{\infty} \lesssim R^{-1/2}$. Since $\sigma_{\ell_{\star}}$ maps diffeomorphically onto its image, we conclude that $|s_{\theta_1} - s_{\theta_2}|_{\infty} \lesssim R^{-1/2}$. But for $\theta_1$ fixed, this is only possible for $O(1)$ choices of $\theta_2$ since the $s_{\theta_2}$ lie on $R^{-1/2}$ separated points. \medskip 

\noindent iii) Fix $\theta \in \Theta_{R,\bmu}(\cS)$ with centre $u_{\theta} = \sigma_{\ell_{\star}}(s_{\theta})$ for some $s_{\theta} \in \Gamma_{R,\bmu}(\cS)$.

Let $\ell_{\star} + 1 \leq t \leq r$ so that $s_{\theta} \in \cN_{2C_\circ\mu_t} M_{\ell_{\star},t}$. Thus there exists some $s_{\theta,t} \in M_{\ell_{\star},t}$ with $|s_{\theta} - s_{\theta,t}| < 2C_\circ \mu_t$. Consequently, $|u_{\theta} - u_{\theta,t}| \lesssim C_\circ\mu_t$ where $u_{\theta,t} := \sigma_{\ell_{\star}}(s_{\theta,t}) \in M_t$ and so $\dist(u_{\theta}, M_t) \lesssim C_\circ\mu_t$. On the other hand, it is clear from the definition that $4 \cdot \theta$ lies in a ball of radius $O(C_\circ^{1/2} R^{-1/2})$ centred at $u_{\theta}$. Since $\mu_t > R^{-1/2}$, we conclude that 
\begin{equation}\label{eq: cap containment 1}
    4 \cdot \theta \subseteq \bigcap_{t = \ell_{\star} + 1}^r \cN_{C_{\circ}^2 \mu_t} M_t,
\end{equation}
provided $C_{\circ} \geq 1$ is chosen sufficiently large.

We now consider $1 \leq k \leq \ell_{\star}$. Suppose  $u \in  4 \cdot \theta  \cap W$, so that, by the definitions \eqref{eq: Lambda def 1}, \eqref{eq: Lambda def 2} and \eqref{eq: Lambda def 3}, we can write 
\begin{equation*}
    u = \bB_{\ell_{\star}}(s_{\theta}) \bdeta + \partial_s \sigma_{\ell_{\star}}(s_{\theta})(s- s_{\theta}) + \sigma_{\ell_{\star}}(s_{\theta})
\end{equation*}
for some $\bdeta \in P_{\ell_{\star}}(4C_{\circ}^{3/8} \bmu)$ and $s - s_{\theta} \in [-4C_{\circ}^{1/2}R^{-1/2}, 4C_{\circ}^{1/2}R^{-1/2}]^{d_{\star}}$.  Thus,
\begin{equation*}
    u = \Phi_{\ell_{\star}}(s; \bdeta) - \cE_{\theta}(s;\bdeta),
\end{equation*}
where $\cE_{\theta}(s;\bdeta)$ is as defined in \eqref{eq: error}. Arguing as in \eqref{eq: error est}, we have $|\cE_{\theta}(s;\bdeta)| \lesssim C_\circ R^{-1}$. 

Since $\Phi_{\ell_{\star}} \colon W_{\ell_{\star}} \times V_{\ell_{\star}} \to W$ is a bijection, we can write $u = \Phi_{\ell_{\star}}(\tilde{s}; \tilde{\bdeta})$ for some $\tilde{s} \in W_{\ell_{\star}}$ and $\tilde{\bdeta} \in  V_{ \ell_{\star}} := V_{0, \ell_{\star}}$. Taking inverses, 
\begin{equation*}
    (\tilde{s}-s; \tilde{\bdeta}-\bdeta) = \Phi_{\ell_{\star}}^{-1}\big(\Phi_{\ell_{\star}}(s; \bdeta) - \cE_{\theta}(s;\bdeta)\big) - \Phi_{\ell_{\star}}^{-1}\big(\Phi_{\ell_{\star}}(s; \bdeta)\big),
\end{equation*}
and therefore, by the mean value theorem, $|\tilde{\bdeta} -  \bdeta|_{\infty} \lesssim C_\circ R^{-1}$. Since $\mu_k \geq R^{-1}$ for $1 \leq k \leq r$  and $\bdeta \in P_{\ell_{\star}}(4C_{\circ}^{3/8} \bmu)$, we have $\tilde{\bdeta} \in P_{\ell_{\star}}(C_{\circ}\bmu)$, provided $C_{\circ} \geq 1$ is chosen sufficiently large. By the definition \eqref{eq: Omega def}, we have $u = \Phi_{\ell_\star}(\tilde{s},\tilde{\bdeta}) \in \Omega_{0,\ell_\star}(C_\circ\bmu)$. Our hypotheses ensure that $C_{\circ} \mu_1 \leq \dots \leq C_{\circ}\mu_r \leq C_{\circ}\mu_{\circ}$,  and so we may apply Lemma~\ref{lem: Omega incl} to deduce that $  u \in  \fN_{\ell_{\star}}^+(C_{\circ} \bmu) =  \fN_{\ell_{\star}}(C_{\circ}^2 \bmu)$.  Thus, 
\begin{equation}\label{eq: cap containment 2}
  4 \cdot \theta \cap W \subseteq \fN_{\ell_{\star}}(C_{\circ}^2 \bmu)
\end{equation}
 and combining \eqref{eq: cap containment 1} and \eqref{eq: cap containment 2} concludes the proof.
\end{proof}

\noindent \underline{Definition of the wave packets.}  Let $\psi \in C^{\infty}_c(\R^d)$ satisfy $0 \leq \psi \leq 1$; $\psi(u) = 1$ if $|u|_{\infty} \leq 1$ and $\psi(u) = 0$ if $|u|_{\infty} \geq 2$. Define $\tilde{\psi}(u) := \psi(u/2)$, so that $\psi \tilde{\psi} = \psi$. Given $\theta \in \Theta_{R,\bmu}(\cS)$, let 
\begin{equation*}
    \psi_{\theta}(u) := \psi\big(\Lambda_{\theta}^{-1}(u - u_{\theta})\big)  \quad \textrm{and} \quad  \tilde{\psi}_{\theta}(u) := \tilde{\psi}\big(\Lambda_{\theta}^{-1}(u - u_{\theta})\big).
\end{equation*}
Thus, $\psi_{\theta}(u) = 1$ if $u \in \theta$, $\supp \psi_{\theta} \subseteq 2 \cdot \theta$, $\supp \tilde{\psi}_{\theta} \subseteq 4 \cdot \theta$ and $\psi_{\theta}\tilde{\psi}_{\theta} = \psi_{\theta}$. By Lemma~\ref{lem: wp} i) and ii), we have
\begin{equation*}
 1 \leq \sum_{\theta \in \Theta_{R,\bmu}(\cS)} \psi_{\theta}(u) \lesssim 1  \qquad \textrm{for all $u \in \fN_r(\bmu)$.}   
\end{equation*}

Now suppose $f \in L^2(S)$ is smooth and satisfies $\supp f \subseteq \cN_{\bmu} \cS$. This abuse of notation translates as $f \in C^{\infty}_c(U)$ with $\supp f \subseteq \fN_r(\bmu)$, provided we also assume $\supp f \subseteq W$. Under these conditions, we may write 
 \begin{equation*}
  f = \sum_{\theta \in \Theta_{R,\bmu}(\cS)} f_{\theta} \psi_{\theta} \qquad \textrm{where $f_{\theta} \in C^{\infty}_c(U)$ satisfy $|f_{\theta}(u)| \lesssim |f(u)|$ for all $u \in U$.}   
 \end{equation*}

Letting $\Lambda_{\theta}^{-\top}(\Z^d) := \{ \Lambda_{\theta}^{-\top} m : m \in \Z^d \}$, define
\begin{equation*}
    \cT_{R,\bmu}(\cS) := \Theta_{R,\bmu}(\cS) \times \Lambda_{\theta}^{-\top}(\Z^d) \quad \textrm{and} \quad \cT_{R,\bmu}(\cS;\theta) := \{\theta\} \times \Lambda_{\theta}^{-\top}(\Z^d)
\end{equation*}
for all $\theta \in \Theta_{R,\bmu}(\cS)$. By rescaling and applying a Fourier series decomposition,
\begin{equation*}
   f(u) = \sum_{(\theta, v) \in \cT_{R,\bmu}(\cS)} f_{\theta, v}(u)  \quad \textrm{and} \quad (f_{\theta} \psi_{\theta})(u) = \sum_{(\theta, v) \in \cT_{R,\bmu}(\cS;\theta)} f_{\theta, v}(u)
\end{equation*}
for each $\theta \in \Theta_{R,\bmu}(\cS)$, where
\begin{equation*}
    f_{\theta,v}(u) := (2\pi)^{-d}|\det \Lambda_{\theta}|^{-1}   (f_{\theta}  \psi_{\theta})\;\widehat{}\;(v) e^{i v \cdot u} \tilde{\psi}_{\theta}(u).
\end{equation*}
Here $(f_{\theta} \psi_{\theta})\;\widehat{}\;(v) = \int_{\R^d} e^{-i v \cdot u} (f_{\theta} \psi_{\theta})(u)\,\ud u$ is the Fourier transform of $f_{\theta} \psi_{\theta}$ evaluated at $v$.
\medskip

\noindent \underline{Basic properties of the wave packets.} We identify three key properties of the above decomposition:\medskip

\noindent\textit{Orthogonality.} Parseval's identity for Fourier series and Lemma~\ref{lem: wp} ii) give
\begin{equation*}
       \big\| \sum_{(\theta, v) \in \cT} f_{\theta, v} \big\|_{L^2(\R^d)}^2 \lesssim \sum_{(\theta, v) \in \cT} \|f_{\theta, v}\|_{L^2(\R^d)}^2 \lesssim \|f\|_{L^2(\R^d)}^2
    \end{equation*}
for any collection $\cT \subseteq \cT_{R,\bmu}(\cS)$.\medskip

\noindent\textit{Spatial localisation.} Recall $\Sigma_{\ell_{\star}} := \Sigma \circ \sigma_{\ell_{\star}}$ and $S_{\ell_{\star}} = \Sigma_{\ell_{\star}}(U_{\ell_{\star}})$. Given $\varepsilon > 0$, let $\varepsilon_{\circ} := \varepsilon/(100n)$ and define the slabs
\begin{equation}\label{eq: slabs 1}
    T_{\theta,v} := \big\{x \in \R^n : \big|(\partial_s\Sigma_{\ell_{\star}})(s_{\theta})^{\top}x + v_{\theta}^{\star}\big|_{\infty} < R^{1/2 + \varepsilon_{\circ}} \big\},  \quad v_{\theta}^{\star} := (\partial_s\sigma_{\ell_{\star}})(s_{\theta})^{\top}v
\end{equation}
for all $(\theta, v) \in \cT_{R,\bmu}(\cS)$. Let $u_{\theta}$ be as defined in \eqref{eq: Lambda def 3}. Observe that, by the chain rule formula 
\begin{equation*}
  (\partial_s\Sigma_{\ell_{\star}})(s_{\theta}) = (\partial_u\Sigma)(u_{\theta}) \circ (\partial_s\sigma_{\ell_{\star}})(s_{\theta})  
\end{equation*} 
and the definition of $\Lambda_{\theta}$, we have $\widetilde{T}_{\theta,v} \subseteq T_{\theta,v}$ where 
\begin{equation}\label{eq: slabs 2}
   \widetilde{T}_{\theta,v} := \big\{x \in Q_R : \big|\Lambda_{\theta}^{\top}\big((\partial_u\Sigma)(u_{\theta})^{\top}x +  v\big)\big|_{\infty} < R^{\varepsilon_{\circ}} \big\}.
\end{equation}

Let $\bT_{R,\bmu}(\cS) := \{T_{\theta, v} : (\theta, v) \in \cT_{R,\bmu}(\cS)\}$ denote the collection of all slabs of the form \eqref{eq: slabs 1} and, for each $T = T_{\theta, v} \in \bT_{R,\bmu}(\cS)$, write $f_T := f_{\theta,v}$. A standard integration-by-parts argument then shows that
\begin{equation*}
    |E_Sf_T(x)| \lesssim_{N, \varepsilon}   |E_Sf_T(x)| \chi_T(x) +  R^{-N}\|f\|_{L^2(U)}
\end{equation*}
for all $x \in Q_R$ and all $N \in \N_0$, where $E_S$ has amplitude $a$ supported in the open set $W$. Indeed, we have a stronger estimate with $\chi_{T}$ replaced with $\chi_{\widetilde{T}}$ for $\widetilde{T}$ as defined in \eqref{eq: slabs 2}, but, for our purposes, we only require localisation to the larger slab $T$.  \medskip

\noindent\textit{Preservation of support.} With the above setup, for $C_{\circ}^2\bmu = (C_{\circ}^2\mu_{\ell})_{\ell = 1}^r$, we have
\begin{equation}\label{eq: preserve supp}
 \supp f_T \cap W \subseteq \fN_r(C_{\circ}^2\bmu) \qquad \textrm{for all $T \in \bT_{R,\bmu}(\cS)$.}
\end{equation}
 Indeed, this follows from Lemma \ref{lem: wp} iii) after unravelling the notation and noting that $\supp \tilde{\psi}_{\theta} \subseteq 4 \cdot \theta$. The localisation \eqref{eq: preserve supp} is the key advantage of the above \textit{anisotropic} wave packet decomposition, as opposed to the regular wave packet decomposition used, for instance, in \cite{BCT2006, BBFL2018}, and is crucial for the forthcoming induction-on-scales argument.




\section{The recursive scheme}

Throughout this section, we fix $2 \leq k \leq n$ and 
\begin{equation*}
    \sE = ((\cS_j)_{j = 1}^k, \bq) \quad \textrm{ with } \quad \cS_j = (S_j, (S_{j,\ell})_{\ell = 1}^{r_j}) 
\end{equation*}
a transverse ensemble in $\R^n$ of codimension $(m(j,\ell))_{j,\ell}$.  We apply the observations of \S\ref{sec: extension} to each of the nested families of submanifolds $\cS_j$. In particular, for $1 \leq j \leq k$ we let $\Phi_{k,\ell}^{(j)} \colon W_{\ell}^{(j)} \times V_{k,\ell}^{(j)} \to W_k^{(j)}$ denote the diffeomorphism chain and $\mu_{\circ}^{(j)} > 0$ be the threshold width for $\cS_j$, as defined in \eqref{eq: diff chain} and on page \pageref{page: threshold}, respectively. We let $\mu_{\circ} := \min\{\mu_{\circ}^{(1)}, \dots, \mu_{\circ}^{(k)}\} > 0$.




\subsection{A reformulation}\label{subsec: reform}  Given $R \geq 1$ and $\bmu = (\bmu_j)_{j = 1}^k$ with $\bmu_j := (\mu_{j,\ell})_{\ell = 1}^{r_j}$ a compatible family of scales for $\sE$, set
\begin{equation*}
    \cL_{R, \bmu}(j) := \big\{1 \leq \ell \leq r_j : \mu_{j,\ell} > R^{-1} \big\}
\end{equation*}
and
\begin{equation*}
    \ell_{R, \bmu}(j) := 
    \begin{cases}
        \min \cL_{R, \bmu}(j) - 1 & \textrm{if $\cL_{R, \bmu}(j) \neq \emptyset$,} \\
         r_j & \textrm{otherwise},
    \end{cases}
\end{equation*}
so that if $\cL_{R, \bmu}(j) \neq \emptyset$, then $\cL_{R, \bmu}(j) = \{\ell_{R, \bmu}(j) + 1 , \dots, r_j\}$. We then set
\begin{equation}\label{eq: recursive manifolds}
     S_{j, R} := S_{j, \ell_{R, \bmu}(j)} \quad \textrm{and} \quad U_{j, R} := U_{j, \ell_{R, \bmu}(j)}, \quad 1 \leq j \leq k,
\end{equation}
where $S_{j, 0} := S_j$ and $U_{j,0} := U_j$; note that $S_{j,R}$ and $U_{j,R}$ also depend on $\bmu$, but we suppress this for notational simplicity. Consider also the restricted nested family of submanifolds $\cS_{j,R}:=(S_{j,R}, (S_{j,\ell})_{\ell \in \cL_{R,\bmu}(j)})$ for $1 \leq j \leq k$.

We let $\rho_{\circ} > 0$ be a constant, chosen sufficiently small for the purposes of the forthcoming argument. In particular, we assume $B(0, \rho_{\circ}) \subseteq W_0^{(j)}$ for $1 \leq j \leq k$. For all $R \geq 1$ and $\bmu$ as above, let $\fR(R; \bmu)$ denote the smallest constant $C \geq 1$ for which the inequality 
\begin{equation}\label{eq: desired}
    \int_{Q_R} \prod_{j = 1}^k |E_{S_{j, R}}f_j|^{q_j}  \leq C \bC_R(\bmu) \prod_{j = 1}^k \|f_j\|_{L^2(U_{j, R})}^{q_j}
\end{equation}
holds for
\begin{equation}\label{eq: recursive const}
    \bC_R(\bmu) := \prod_{j = 1}^k \prod_{\ell \in \cL_{R, \bmu}(j)} (\mu_{j,\ell})^{m(j,\ell)q_j/2}
\end{equation}
whenever $\rho(E_{S_{j,R}}) < \rho_{\circ}^{n - \ell_{R, \bmu}(j)}$ for $1 \leq j \leq k$ and the $f_j \in L^2(S_{j,R})$ are smooth, compactly supported and satisfy\footnote{Since $f_j \in L^2(S_{j,R})$, the reference manifolds are $S_{j,R}$, and we understand $\cN_{\mu_{j,\ell}}(S_{j,\ell})$ as the  neighbourhood in $\R^{\dim(S_{j,R})}$ of the submanifold $\gamma_{j,\ell_{R,\bmu(j)} +1} \circ \cdots \circ \gamma_{j,\ell}(U_{j,\ell})$.}
\begin{equation*}
     \supp f_j \subseteq \cN_{\mu_{j,\ell}} S_{j,\ell}\quad \textrm{for all $\ell \in \cL_{R, \bmu}(j)$.}
\end{equation*}
If $\cL_{R, \bmu}(j) = \emptyset$, then the product over $\ell \in \cL_{R, \bmu}(j)$ in \eqref{eq: recursive const} is interpreted as equal to $1$. We also define
\begin{equation*}
    \fR(R) := \sup_{\bmu,\, \mu_{j,\ell} \leq \mu_{\circ}} \fR(R;\bmu),
\end{equation*}
where the supremum is taken over all compatible families of scales $\bmu=(\bmu_j)_{j=1}^k$ for $\sE$ with $\bmu_j=(\mu_{j,\ell})_{\ell=1}^{r_j}$ satisfying $0<\mu_{j,\ell} \leq \mu_\circ$ for $1 \leq \ell \leq r_j, 1 \leq j \leq k$. 

Our goal is to show that, for all $\varepsilon > 0$, the bound $\fR(R) \lesssim_{\varepsilon} R^{\varepsilon}$ holds for all $R \geq 1$. From this we shall deduce Theorem~\ref{thm: nested}: see \S\ref{subsec: recursive step}. We further reformulate our goal as follows. Let $\bmu$ be a compatible family of scales for $\sE$. We first observe a trivial estimate for $\fR(R;\mu)$, by bounding the left-hand side of \eqref{eq: desired} in terms of the $L^{\infty}$-norms of the $|E_{S_{j, R}}f_j|$ and applying a Riemann--Lebesgue-type argument to arrive at an expression involving the $L^1$ norms of the $f_j$. We can then pass to $L^2$ norms using Cauchy--Schwarz, taking advantage of the localisation of the $\supp f_j$, to deduce that
\begin{equation}\label{eq: poly size}
   \fR(R; \bmu) \lesssim R^n \qquad \textrm{for all $\bmu$ compatible for $\sE$, and so} \qquad \fR(R) \lesssim R^n.
\end{equation}
Thus, the \textit{upper exponent}
\begin{equation}\label{eq: upper exponent}
    \eta_{\mathrm{exp}} := \limsup_{R \to \infty} \frac{\log \fR(R)}{\log R}
\end{equation}
is a well defined real number which satisfies $\eta_{\mathrm{exp}} \leq n$. To prove Theorem~\ref{thm: nested}, it shall suffice to show $\eta_{\mathrm{exp}} \leq 0$.




\subsection{Uncertainty principle reduction}\label{subsec: uncertainty red} As a manifestation of the uncertainty principle, $\fR(R; \bmu)$ automatically bounds a wider variety of multilinear expressions.  In what follows, we let $C_\circ \geq 1$ be an admissible constant, chosen sufficiently large for the purposes of the forthcoming argument and so that the conclusions of \S\ref{sec: extension} hold. Let $\bmu = (\bmu_j)_{j = 1}^k$ be a compatible family of scales for $\sE$, with $\bmu_j = (\mu_{j,\ell})_{\ell = 1}^{r_j}$  and $0 < \mu_{j,\ell} \leq \mu_{\circ}$ for $1 \leq \ell \leq r_j$, $1 \leq j \leq k$. We define $\btmu = (\btmu_j)_{j = 1}^k$ with $\btmu_j = (\tilde{\mu}_{j,\ell})_{\ell = 1}^{r_j}$ for $\tilde{\mu}_{j,\ell} := \min\{C_{\circ}^2\mu_{j,\ell}, \mu_{\circ}\}$, $1 \leq \ell \leq r_j$, $1 \leq j \leq k$.

\begin{lemma}\label{lem: uncer red} Let $R \geq 1$ and $\cL_{R, \btmu}(j) \subseteq \cL^{\star}(j) \subseteq \{1, \dots, r_j\}$ for $1 \leq j \leq k$. Suppose that whenever $\ell \in \cL^{\star}(j)$ and $\ell \leq \ell' \leq r_j$, we have $\ell' \in \cL^{\star}(j)$. Define
\begin{equation*}
    \ell^{\star}(j) :=
    \begin{cases}
        \min \cL^{\star}(j) - 1 & \textrm{if $\cL^{\star}(j) \neq \emptyset$,} \\
        r_j & \textrm{otherwise,}
    \end{cases}
\end{equation*}
so that if $\cL^{\star}(j) \neq \emptyset$, then $\cL^{\star}(j) := \{\ell^{\star}(j) + 1, \dots, r_j\}$. Set
\begin{equation*}
    S_j^{\star} := S_{j, \ell^{\star}(j)}
    \quad \textrm{and} \quad 
    U_j^{\star} := 
        U_{j, \ell^{\star}(j)} \quad \textrm{for $1 \leq j \leq k$.}
\end{equation*}
If $\rho(E_{S_j^{\star}}) < \rho_{\circ}^{n - \ell^{\star}(j)}$ for $1 \leq j \leq k$, then 
\begin{equation}\label{eq: uncer red}
   \int_{Q_R} \prod_{j = 1}^k |E_{S_j^{\star}}f_j|^{q_j} \lesssim  \fR(R;  \btmu) \prod_{j = 1}^k \prod_{\ell \in \cL^{\star}(j)} (\mu_{j,\ell})^{m(j,\ell) q_j/2}\prod_{j = 1}^k \|f_j\|_{L^2(U_j^{\star})}^{q_j}
\end{equation}
holds whenever the $f_j \in L^2(S_j^{\star})$ are smooth, compactly supported and 
\begin{equation}\label{eq: uncer red supp}
     supp f_j \subseteq \cN_{\mu_{j,\ell}} S_{j,\ell} \quad \textrm{for all $\ell \in \cL^{\star}(j)$.}
\end{equation}
\end{lemma}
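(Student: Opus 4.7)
The plan is to reduce \eqref{eq: uncer red} to the defining inequality for $\fR(R;\btmu)$ by peeling off the extra nested submanifolds $S_{j,\ell}$ with $\ell \in \cL^\star(j) \setminus \cL_{R,\btmu}(j)$ via the slice formula of \S\ref{subsec: slice formula}, then using the local constancy \eqref{eq: loc const} to freeze the slicing parameters, and finally invoking a Marcinkiewicz--Zygmund-type vector-valued extension of the scalar estimate for $\fR(R;\btmu)$ to close the argument. The key observation is that for $\ell \in \cL^\star(j) \setminus \cL_{R,\btmu}(j)$ we have $\tilde{\mu}_{j,\ell} \leq R^{-1}$, and hence $\mu_{j,\ell} \leq R^{-1}$, placing us precisely in the regime where local constancy is available.

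After a standard density argument reducing to smooth, compactly supported $f_j$, I apply the slice formula from \S\ref{subsec: slice formula} to each $E_{S_j^\star} f_j$, slicing along the intermediate manifolds $S_{j,\ell}$ for $\ell^\star(j) < \ell \leq \ell_{R,\btmu}(j)$, to obtain
$$ E_{S_j^\star} f_j(x) = C_j \int_{P_j} e^{i x \cdot \xi_j(\bdeta_j)} E_{S_{j,R}^{\bdeta_j}} f_{j,\bdeta_j}(x)\, \ud \bdeta_j, $$
where $|P_j| \sim \prod_{\ell \in \cL^\star(j) \setminus \cL_{R,\btmu}(j)} \mu_{j,\ell}^{m(j,\ell)}$, each $S_{j,R}^{\bdeta_j}$ is an $O(R^{-1})$-perturbation of the target $S_{j,R}^{\btmu} := S_{j,\ell_{R,\btmu}(j)}$, and the $f_{j,\bdeta_j}$ inherit appropriate support containment from \eqref{eq: support slice 2}. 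Cauchy--Schwarz in $\bdeta_j$ combined with local constancy \eqref{eq: loc const} then yields
$$ |E_{S_j^\star} f_j(x)|^2 \lesssim |P_j| \sum_\alpha b_\alpha \int_{P_j} |E_{S_{j,R}^{\btmu}} g_{j,\bdeta_j,\alpha}(x)|^2\, \ud \bdeta_j, $$
where the $b_\alpha$ decay rapidly and each $g_{j,\bdeta_j,\alpha}$ is supported in $\cN_{\tilde{\mu}_{j,\ell}} S_{j,\ell}$ for every $\ell \in \cL_{R,\btmu}(j)$, as required by the definition of $\fR(R;\btmu)$.

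Raising to the power $q_j/2 \leq 1$, invoking the subadditivity $(\sum_\alpha c_\alpha)^{q_j/2} \leq \sum_\alpha c_\alpha^{q_j/2}$ to pull the $\alpha$-sum outside the power, taking the product over $j$, and integrating over $Q_R$ reduces matters to establishing, for each multi-index $\balpha = (\alpha_1,\dots,\alpha_k)$, a bound of the form
$$ \int_{Q_R} \prod_{j=1}^k \Big(\int_{P_j} |E_{S_{j,R}^{\btmu}} g_{j,\bdeta_j,\alpha_j}(x)|^2\, \ud \bdeta_j\Big)^{q_j/2} \ud x \lesssim \fR(R;\btmu)\, \bC_R(\btmu) \prod_{j=1}^k \Big(\int_{P_j} \|g_{j,\bdeta_j,\alpha_j}\|_{L^2}^2\, \ud \bdeta_j\Big)^{q_j/2}. $$
This is an $L^2(P_j)$-valued extension of the scalar inequality defining $\fR(R;\btmu)$, and is the main hurdle in the argument. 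I would establish it by a Marcinkiewicz--Zygmund argument as in \cite[Appendix]{Tao2020}: approximate the $\bdeta_j$-integrals by Riemann sums to reduce to an $\ell^2$-valued multilinear inequality, which follows from the scalar case via auxiliary Gaussian or Rademacher randomness. The crucial point is that local constancy has already removed the $\bdeta_j$-dependence from the extension operators themselves, so the randomness is only required to act on the inputs $g_{j,\bdeta_j,\alpha_j}$ (sidestepping the obstruction described in \S1.3 of the introduction).

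The remaining bookkeeping is routine: a change of variables via the diffeomorphism chain identifies $\int_{P_j} \|f_{j,\bdeta_j}\|_{L^2}^2 \ud \bdeta_j$ with $\|f_j\|_{L^2(U_j^\star)}^2$ up to an admissible constant (with $|g_{j,\bdeta_j,\alpha_j}| \lesssim |f_{j,\bdeta_j}|$ absorbing the amplitudes), the series $\sum_\alpha b_\alpha^{q_j/2}$ converges, and the product of measure factors combines with $\bC_R(\btmu)$ as
$$ \bC_R(\btmu) \prod_{j=1}^k |P_j|^{q_j/2} \sim \prod_{j=1}^k \prod_{\ell \in \cL^\star(j)} \mu_{j,\ell}^{m(j,\ell) q_j/2}, $$
which is exactly the codimension weight appearing on the right-hand side of \eqref{eq: uncer red}.
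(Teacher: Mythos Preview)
Your proposal is correct and follows essentially the same route as the paper's proof: slice formula plus Cauchy--Schwarz to extract the $|P_j|^{q_j/2}$ factors, local constancy \eqref{eq: loc const} to remove the $\bdeta_j$-dependence from the extension operators, subadditivity in the $\alpha$-sum, and a multilinear Marcinkiewicz--Zygmund argument (packaged in the paper as Proposition~\ref{prop: multiMZ}) to lift the defining inequality for $\fR(R;\btmu)$ to the $L^2(P_j)$-valued setting. The only cosmetic difference is that the paper explicitly separates the indices $j$ into $\cJ_R$ (where $\ell^\star(j)=\ell_{R,\btmu}(j)$ and no slicing is needed) and $\cJ^\star$, whereas you treat all $j$ uniformly; this is harmless since the slicing degenerates trivially when $\fm_j=\emptyset$.
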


\begin{proof} Let $\cJ_R := \{1 \leq j \leq k : \ell^{\star}(j) = \ell_{R,  \btmu}(j)\}$ and $\cJ^{\star} := \{1, \dots, k\}\setminus \cJ_R$, so that $S_j^{\star} = S_{j, R}$ for all $j \in \cJ_R$ and $\ell^{\star}(j) < \ell_{R, \btmu}(j)$ for $j \in \cJ^{\star}$. Provided $\rho_\circ>0$ is chosen sufficiently small, we can apply the slice formula \eqref{eq: slice formula} to the pairs $S_j^{\star}$ and $S_{j,R}(\bdeta)$ for $j \in \cJ^{\star}$ and we have, by Cauchy--Schwarz, 
 \begin{equation}\label{eq: uncer 1}
\int_{Q_R}\prod_{j = 1}^k |E_{S_j^{\star}}f_j|^{q_j} \lesssim  \prod_{j \in \cJ^{\star}} \prod_{\ell \in \fm_j} (\mu_{j, \ell})^{m(j, \ell)q_j/2} I(R;\bmu)
\end{equation}
where $\fm_j := \{\ell^{\star}(j) + 1, \dots, \ell_{R,  \btmu}(j) \}$ and
\begin{equation*}
  I(R;\bmu) := \int_{Q_R} \prod_{j  \in \cJ_R} |E_{S_{j,R}}f_j(x)|^{q_j} \prod_{j \in \cJ^{\star}} \Big(\int_{P_j(\bmu_j)}|E_{S_{j,R}(\bdeta)}f_{j, \bdeta}(x)|^2 \,\ud \bdeta\Big)^{q_j/2}\,\ud x, 
\end{equation*}
for 
\begin{equation*}
   P_j(\bmu_j) := \prod_{\ell \in \fm_j} [-C_{\circ}^{1/8}\mu_{j,\ell}, C_{\circ}^{1/8}\mu_{j,\ell}]^{m(j,\ell)} \qquad \textrm{for $j \in \cJ^{\star}$.} 
\end{equation*}
Here, for an appropriate map $\Phi_j^{\star}$ satisfying $\Phi_j^{\star}(0;\bzero) = 0$,\footnote{Explicitly, $\Phi_j^{\star} := \Phi_{\ell^{\star}(j),\ell_{R, \btmu}(j)}^{(j)} \colon W^{(j)}_{\ell_{R, \btmu}(j)} \times V_{\ell^{\star}(j),\ell_{R, \btmu}(j)}^{(j)} \to W_{\ell^{\star}(j)}^{(j)}$.}  we have
\begin{equation*}
    f_{j, \bdeta}(s) := f_j\circ \Phi_j^{\star}(s;\bdeta) \qquad \textrm{for $j \in \cJ^{\star}$.}
\end{equation*}
Observe that $\cL^{\star}(j)  = \cL_{R, \btmu}(j)  \cup \fm_j$ for $1 \leq j \leq k$, where the union is disjoint and $\fm_j = \emptyset$ whenever $j \in \cJ_R$. 

Fix $j \in \cJ^{\star}$ so that $\ell^{\star}(j) < \ell_{R, \btmu}(j)$. By the definition and the ordering of the $(\mu_{j,\ell})_{\ell = 1}^{r_j}$, we have
\begin{equation*}
  \{ 1 \leq \ell \leq r_j :  \tilde{\mu}_{j,\ell} \leq R^{-1}\} = \{1, \dots, \ell_{R,  \btmu}(j)\} 
\end{equation*}
and so $\mu_{j,\ell} \leq  \tilde{\mu}_{j, \ell} \leq R^{-1}$ for all $\ell \in \fm_j$. Consequently, given $g \in C^{\infty}_c(U_{j,R})$, from \eqref{eq: loc const} we have 
\begin{equation}\label{eq: applying slice}
   |E_{S_{j,R}(\bdeta)}g(x)|^2 \lesssim  \sum_{\alpha[j] \in \N_0^n} b_{\alpha[j]} |E_{S_{j,R}} \big[a_{j,\bdeta}^{\alpha[j]} g\big](x)|^2, \quad \bdeta \in P_j(\bmu_j),\, x \in Q_R,
\end{equation}
where the $a_{j,\bdeta}^{\alpha[j]} \in C^{\infty}(U_{j,R})$ satisfy $|a_{j,\bdeta}^{\alpha[j]}(s)| \leq 1$ and $b_{\alpha[j]} := \prod_{i=1}^n \tfrac{C^{\alpha[j]_i}}{\alpha[j]_i!}$ for an admissible constant $C \geq 1$.\par 

For each $1\leq j\leq k$, define the Hilbert space $H_j:=L^2(P_j(\bmu_j))$ and consider the $H_j$-valued operator $\bE_{S_{j,R}}$ given by the mapping
\begin{equation*}
    \bE_{S_{j,R}}\bg_j(x):=(E_{S_{j,R}}g_{j,\bdeta}(x))_{\bdeta}, \qquad  \bdeta\in P_j(\bmu_j),\, x\in \R^n,
\end{equation*}
where $\bg_j:=(g_{j,\bdeta})_{\bdeta}\in L^2(U_{j,R};H_j)$. Observe that, by Fubini's theorem, for any $x\in \R^n$ and $h_j\in H_j$, we have 
\begin{equation*}
  \langle \bE_{S_{j,R}}\bg_j(x),h_j\rangle_{H_j}=E_{S_{j,R}}(\langle\bg_j(\cdot),h_j\rangle_{H_j})(x).  
\end{equation*}
Hence $\bE_{S_{j,R}}$ coincides with the $H_j$-valued extension of $E_{S_{j,R}}$, as described in the appendix. Letting $\kappa := \#\cJ^{\star}$, we write $\balpha = (\alpha[j])_{j \in \cJ^{\star}}$ for $\balpha \in (\N_0^n)^{\kappa}$, $\bf_j^{\balpha}:=(f_{j,\bdeta}^{\balpha})_{\bdeta}$ with $f_{j, \bdeta}^{\balpha} := a_{j,\bdeta}^{\alpha[j]} f_{j, \bdeta}$. Using this notation, and applying \eqref{eq: applying slice}, we have
\begin{align}
    I(R;\bmu) & \lesssim \int_{Q_R} \prod_{j  \in \cJ_R}  |E_{S_{j,R}}f_j(x)|^{q_j} \prod_{j \in \cJ^{\star}} \Big( \sum_{\alpha[j] \in \N_0^n} b_{\alpha[j]} \| \bE_{S_{j,R}}\bf_{j}^{\balpha}(x) \|_{H_j}^2  \Big)^{q_j/2} \,\ud x \notag \\
    &\lesssim \sum_{\balpha \in (\N_0^n)^{\kappa}} \prod_{j \in \cJ^{\star}} b_{\alpha[j]}^{q_j/2}  \int_{Q_R} \prod_{j  \in \cJ_R}  |E_{S_{j,R}}f_j(x)|^{q_j} \prod_{j \in \cJ^{\star}} \|\bE_{S_{j,R}}\bf_{j}^{\balpha}(x)\|_{H_j}^{q_j}\,\ud x. \label{eq: uncer 2}
\end{align}
Here the last inequality follows by the nesting of the  $\ell^{2/q_j}$ and $\ell^1$ norms for $2/q_j \geq 1$.

If $j \in \cJ^{\star}$, then, provided $\rho_{\circ} > 0$ is chosen sufficiently small, we have $\rho(E_{S_{j,R}}) < \rho_{\circ}^{-1} \rho(E_{S_j^{\star}})$. Our hypotheses therefore ensure
\begin{equation*}
    \rho(E_{S_{j,R}}) < \rho_{\circ}^{-1} \rho_{\circ}^{n - \ell^{\star}(j)} \leq \rho_{\circ}^{n - \ell_{R, \btmu}(j)} \qquad \textrm{for $j \in \cJ^{\star}$,}
\end{equation*}
since $0 < \rho_{\circ} < 1$ and $\ell_{R,  \btmu}(j) \geq \ell^{\star}(j) + 1$. Note also that $\cL_{R,  \btmu}(j) \subseteq \cL^{\star}(j)$ and that, by \eqref{eq: support slice 2}, the functions $f_{j,\bdeta}$ are in $L^2(S_{j,R})$ for almost every $\bdeta \in P_j(\bmu_j)$ and are supported in $\cN_{ \tilde{\mu}_{j,\ell}} S_{j,\ell}$ for all $\ell \in \cL_{R, \btmu}(j)$.

In light of the preceding discussion, we can apply the inequality \eqref{eq: desired} featured in the definition of $\fR(R; \btmu)$ to the extension operators $E_{S_{j,R}}$ and the functions $(f_j)_{j \in \cJ_R}$, $(f_{j,\bdeta_j}^{\balpha})_{j \in \cJ^{\star}}$ as defined above. Moreover, the inequality \eqref{eq: desired} can be lifted to a vector-valued variant using a multilinear extension of the Marcinkiewicz--Zygmund theorem (see Proposition \ref{prop: multiMZ}). Combining these observations,
\begin{align}\label{eq: uncer 3}
    \int_{Q_R}& \prod_{j  \in \cJ_R}  |E_{S_{j,R}}f_j(x)|^{q_j}  \prod_{j  \in \cJ^{\star}} \|\bE_{S_{j,R}}\bf_{j}^{\balpha}(x)\|_{H_j}^{q_j}\,\ud x \\
    \nonumber
   & \lesssim \fR(R;  \btmu)  \prod_{j = 1}^k \prod_{\ell \in \cL_{R, \btmu}(j)} (\mu_{j,\ell})^{m(j,\ell)q_j/2} \prod_{j  \in \cJ_R} \|f_j\|_2^{q_j} \prod_{j  \in \cJ^{\star}} \|\|\bf_j^{\balpha}\|_{H_j}\|_2^{q_j}.
\end{align}
Here we have used the fact that $\tilde{\mu}_{j,\ell} \leq C_{\circ}^2 \mu_{j, \ell}$ for all $1 \leq j \leq k$ and $\ell \in \cL_{R, \btmu}(j)$.

By the pointwise bound for the $a_{j,\bdeta}^{\alpha[j]}$ and a change of variables, 
\begin{equation*}
   \|\|\bf_j^{\balpha}\|_{H_j}\|_2 =  \Big( \int_{P_j(\bmu_j)} \|f_{j, \bdeta_j}^{\balpha}\|_2^2\,\ud \bdeta_j \Big)^{1/2} \lesssim \|f_j\|_2 \qquad \textrm{for $j \in \cJ^{\star}$.}
\end{equation*}
Combining this with \eqref{eq: uncer 3}, we obtain
\begin{align}\label{eq: uncer 4}
    \int_{Q_R} \prod_{j  \in \cJ_R} &|E_{S_{j,R}}f_j(x)|^{q_j}  \prod_{j \in \cJ^{\star}} \|\bE_{S_{j,R}}\bf_{j}^{\balpha}(x) \|_{H_j}^{q_j}\,\ud x \\
    \nonumber
    &\lesssim \fR(R; \btmu) \prod_{j = 1}^k \prod_{\ell \in \cL_{R, \btmu}(j)} (\mu_{j,\ell})^{m(j,\ell)q_j/2} \prod_{j  = 1}^k \|f_j\|_2^{q_j}.
\end{align}

We apply \eqref{eq: uncer 4} to bound the right-hand side of \eqref{eq: uncer 2}, noting that the sequence $(b_{\alpha}^{q_j/2})_{\alpha \in \N_0^n}$ is (absolutely) summable. Combining the resulting inequality with \eqref{eq: uncer 1}, we deduce the desired bound. 
\end{proof}




\subsection{Recursive step}\label{subsec: recursive step} Let $p_j := q_j/2$ and $1 \leq j \leq k$. For $R \geq 1$, let $\fK(R)$ be the smallest constant $C \geq 1$ for which the inequality
\begin{equation*}
    \int_{Q_R} \prod_{j = 1}^k \Big| \sum_{T_j \in \bT_j} c_{T_j} \chi_{T_j} \Big|^{p_j} \leq CR^{n/2} \prod_{j=1}^k \Big( \sum_{T_j \in \bT_j} |c_{T_j}| \Big)^{p_j}
\end{equation*}
holds whenever, for each $1 \leq j \leq k$, there exists some $1 \leq \ell_j \leq r_j$ such that $\bT_j$ is a countable set of slabs of the form
    \begin{equation}\label{eq: tubes for Kakeya}
        T_j = \big\{ x \in \R^n : \big| (\partial_s \Sigma_{j,\ell_j})(s)^{\top}x - v\big| < R^{1/2} \big\}
    \end{equation}
    where $s$, $v \in \R^{d_j'}$ with $|s|_{\infty} < \rho_{\circ}$ for $d_j' := \dim(S_{j,\ell_j})$ and $(c_{T_j})_{T_j \in \bT_j} \in \ell^1(\bT_j)$.

Let $\bmu = (\bmu_j)_{j = 1}^k$ be a compatible family of scales for $\sE$, with $\bmu_j = (\mu_{j,\ell})_{\ell = 1}^{r_j}$ and $0 < \mu_{j,\ell} \leq \mu_{\circ}$ for $1 \leq \ell \leq r_j$, $1 \leq j \leq k$. Recall the constant $C_\circ \geq 1$ and the definition of $\btmu$ from \S\ref{subsec: uncertainty red}. Let $\bttmu$ denote the compatible family of scales which result from applying the operation $\bmu \mapsto \btmu$ twice to $\bmu$. Provided $C_{\circ} \geq 1$ is chosen sufficiently large, the following holds.

\begin{lemma}\label{lem: recursive step} For all $\varepsilon > 0$, $R \geq C_{\circ}^8$ and $\bmu$ compatible for $\sE$, we have 
\begin{equation*}
    \fR(R; \bmu) \lesssim_{\varepsilon} R^{\varepsilon} \fR(R^{1/2};  \bttmu)\fK(R). 
\end{equation*}
\end{lemma}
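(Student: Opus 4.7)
The plan is to follow the standard Bennett--Carbery--Tao induction-on-scales template, implemented with the anisotropic wavepacket decomposition of \S\ref{subsec: wp} and the uncertainty principle reduction provided by Lemma~\ref{lem: uncer red}. Fix $\bmu$ compatible for $\sE$ and functions $f_j$ satisfying the hypotheses defining $\fR(R;\bmu)$. For each $1 \leq j \leq k$, I would apply the anisotropic wavepacket decomposition at scale $R$ to the restricted nested family $\cS_{j,R}$ with localisation scales $(\mu_{j,\ell})_{\ell \in \cL_{R,\bmu}(j)}$, yielding $f_j = \sum_{T \in \bT_j} f_{j,T}$. The three key properties of this decomposition---orthogonality, spatial localisation of $E_{S_{j,R}}f_{j,T}$ to the slab $T$, and preservation of support $\supp f_{j,T} \subseteq \fN_{r_j}(\btmu_j)$ from \eqref{eq: preserve supp}---drive the argument.

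Next, tile $Q_R$ by finitely-overlapping balls $B$ of radius $R^{1/2}$. On each such $B$, spatial localisation permits replacing $E_{S_{j,R}}f_j$ by $E_{S_{j,R}}f_{j,B}$, where $f_{j,B} := \sum_{T \in \bT_j : T \cap B \neq \emptyset} f_{j,T}$, at the cost of a rapidly decaying error. By \eqref{eq: preserve supp}, the function $f_{j,B}$ is supported in $\cN_{\tilde{\mu}_{j,\ell}}S_{j,\ell}$ for every $\ell \in \cL_{R,\bmu}(j)$. The hypothesis $R \geq C_{\circ}^8$ ensures $\cL_{R^{1/2},\bttmu}(j) \subseteq \cL_{R,\bmu}(j)$ (since $C_\circ^4 R^{-1} \leq R^{-1/2}$), so Lemma~\ref{lem: uncer red} applies on each $B$ with $\cL^\star(j) := \cL_{R,\bmu}(j)$ (so that $S_j^\star = S_{j,R}$), yielding
\begin{equation*}
    \int_B \prod_{j=1}^k |E_{S_{j,R}}f_{j,B}|^{q_j} \lesssim \fR(R^{1/2};\bttmu)\, \bC_R(\bmu) \prod_{j=1}^k \|f_{j,B}\|_2^{q_j},
\end{equation*}
after absorbing the factor $\prod_{j,\ell} \tilde{\mu}_{j,\ell}^{m(j,\ell)q_j/2}$ produced by the lemma into $\bC_R(\bmu)$ via $\tilde{\mu}_{j,\ell} \lesssim \mu_{j,\ell}$.

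Finally, I would sum over balls. Orthogonality within each ball gives $\|f_{j,B}\|_2^2 \lesssim \sum_{T \cap B \neq \emptyset} \|f_{j,T}\|_2^2$; re-interpreting $\sum_B \prod_j \|f_{j,B}\|_2^{q_j}$ as $R^{-n/2}$ times an integral over $Q_R$ of products of indicator-weighted sums, the Kakeya-type bound $\fK(R)$ applies, since the wavepacket slabs are of the form \eqref{eq: tubes for Kakeya} up to a harmless $R^{\varepsilon_\circ}$ enlargement. This yields
\begin{equation*}
    \sum_B \prod_{j=1}^k \|f_{j,B}\|_2^{q_j} \lesssim R^{O(\varepsilon)} \fK(R) \prod_{j=1}^k \Big(\sum_{T \in \bT_j} \|f_{j,T}\|_2^2 \Big)^{q_j/2} \leq R^{O(\varepsilon)} \fK(R) \prod_{j=1}^k \|f_j\|_2^{q_j},
\end{equation*}
using global orthogonality of the wavepackets in the last step. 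Combining these estimates gives the claimed recursive inequality.

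The principal obstacle is the careful coordination of the three scale families $\bmu$, $\btmu$, $\bttmu$: wavepacket support preservation provides control at scale $\btmu$, whilst the inductive hypothesis must be invoked at scale $\bttmu$ in order to close the recursion. The inclusion $\cL_{R^{1/2},\bttmu}(j) \subseteq \cL_{R,\bmu}(j)$ is the key combinatorial fact enabling this match, and is precisely why the hypothesis $R \geq C_\circ^8$ is imposed. Routine bookkeeping aside, this is the one spot where the scale hierarchy could fail and the whole scheme collapse, so it is essential to verify this inclusion carefully before any of the remaining steps go through.
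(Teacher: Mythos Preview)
Your proposal is correct and follows essentially the same route as the paper's proof: anisotropic wavepacket decomposition of each $f_j$ with respect to $\cS_{j,R}$, tiling $Q_R$ by $R^{1/2}$-cubes, spatial localisation to $f_{j,Q}$, invoking Lemma~\ref{lem: uncer red} on each cube with $\cL^\star(j)=\cL_{R,\bmu}(j)$ and $\bmu$ replaced by $\btmu$ (yielding the $\fR(R^{1/2};\bttmu)$ factor), and then summing using orthogonality and the Kakeya constant $\fK(R)$. Your identification of the inclusion $\cL_{R^{1/2},\bttmu}(j)\subseteq\cL_{R,\bmu}(j)$ via $R\geq C_\circ^8$ as the crucial scale-matching step, and your remark on the $R^{\varepsilon_\circ}$ slab-width discrepancy, both mirror the paper's treatment.
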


Once Lemma~\ref{lem: recursive step} is established, Theorem~\ref{thm: nested} may be deduced as follows. 

\begin{proof}[Proof (of Theorem~\ref{thm: nested})] We work with the fixed ensemble $\sE$ introduced at the beginning of the section. Without loss of generality, we may assume our compatible family of scales $\bmu = (\bmu_j)_{j = 1}^k$ with $\bmu_j = (\mu_{j,\ell})_{\ell = 1}^{r_j}$ satisfies $0 < \mu_{j,\ell} \leq \mu_{\circ}$ for $1 \leq \ell \leq r_j$, $1 \leq j \leq k$. Indeed, otherwise we can pass to a (strict) subensemble $\mathscr{F}$ of $\sE$ (consisting precisely of those $S_{j,\ell}$ for which $0 < \mu_{j,\ell} \leq \mu_{\circ}$ holds) and the analysis is simpler. This reduction can be formalised using an induction on the number of submanifolds in the ensemble.

In view of \eqref{eq: poly size}, we can assume $R$ is sufficiently large, namely $R \geq C_{\circ}^8$. By the hypothesis $\BLreg{\bL(\mathscr{E})}{\bp}< \infty$ and  Proposition~\ref{prop: BL ensemble}, we have that $\BLreg{\bL(\mathscr{F})}{\bp}<\infty$ for all subensembles $\mathscr{F}$ of $\mathscr{E}$. For $\varepsilon>0$, let $\nu_{\mathscr{F}}>0$ and $C_{\mathscr{F},\varepsilon}\geq 1$ be the constants appearing in Theorem \ref{thm: Mald} for each datum $(\bL(\mathscr{F}),\bp)$, and set
\begin{align*}
\overline{\nu}_{\mathscr{E}}&:=\min\{\nu_{\mathscr{F}} : \mathscr{F} \text{ subensemble of } \mathscr{E} \}>0   \\
\overline{C}_{\mathscr{E},\varepsilon}&:=\max \{C_{\mathscr{F},\varepsilon} : \mathscr{F} \text{ subensemble of } \mathscr{E}\}< \infty.
\end{align*}
Provided $\rho_{\circ} > 0$ is sufficiently small, the slabs \eqref{eq: tubes for Kakeya} have core planes which are, modulo translations, within distance $\overline{\nu}_{\mathscr{E}}$ of the fixed subspaces  $\ker \partial_s \Sigma_{j,\ell_j}(0)^\top$. Since,  by \eqref{eq: Phi vs gamma}, any  $(\partial_s \Sigma_{j,\ell_j}(0)^\top)_{j=1}^k =( (\partial_s \sigma_{\ell_j}(0))^\top (\partial_s \Sigma_j (0))^\top )_{j=1}^k$ corresponds to $\bL(\mathscr{F})$ for some subensemble $\mathscr{F}$ of $\mathscr{E}$, it then follows from Theorem~\ref{thm: Mald}, taking $\lambda=R^{1/2}$, that for every $\varepsilon>0$, we have $\fK(R)\leq \overline{C}_{\sE,\varepsilon} R^\varepsilon$ for all $R \geq 1$. Combining this with Lemma~\ref{lem: recursive step}, for all $\varepsilon > 0$ we have 
\begin{equation}\label{eq: recursive step}
    \fR(R; \bmu) \lesssim_{\varepsilon} R^{\varepsilon} \fR(R^{1/2};  \bttmu) \qquad \textrm{for all $R \geq C_\circ^8$. }
\end{equation}

Recall the definition of the upper exponent $\eta_{\mathrm{exp}}$ from \eqref{eq: upper exponent} which, by \eqref{eq: poly size}, is a well-defined real number. The inequality \eqref{eq: recursive step} implies that $\eta_{\mathrm{exp}} \leq \eta_{\mathrm{exp}}/2$ and so $\eta_{\mathrm{exp}} \leq 0$. Thus, for all $\varepsilon >0$, we have $\fR(R) \lesssim_{\varepsilon} R^{\varepsilon}$. Let $\cL^{\star}(j) = \{1, \dots, r_j\}$, so that $\ell^{\star}(j) = 0$, for $1 \leq j \leq k$. Applying Lemma \ref{lem: uncer red} with these $\cL^{\star}(j)$ together with the above estimate for $\fR(R)$, the desired bound follows with $\rho_{\sE} := \rho_{\circ}^n$. 
\end{proof}

We turn to the proof of Lemma \ref{lem: recursive step}, which is a variant of an argument from~\cite{BCT2006, BBFL2018}.

\begin{proof}[Proof (of Lemma \ref{lem: recursive step})] Let $\varepsilon > 0$ and, recalling the definitions from \S\ref{subsec: wp}, set 
\begin{equation*}
    \bT_j := \bT_{R,\bmu}(\cS_{j,R}), \qquad 1 \leq j \leq k.
\end{equation*}
Using the notation in \eqref{eq: recursive manifolds}, the slabs in $\bT_j$ have width $R^{1/2+\varepsilon_{\circ}}$, where $\varepsilon_{\circ} := \varepsilon/(100n)$, and their core planes are normal to the tangent plane to $S_{j, R^{1/2}} = \Sigma_{j, R^{1/2}}(U_{j,R^{1/2}})$ at $\Sigma_{j, R^{1/2}}(s)$ for some $s \in U_{j,R^{1/2}}$: that is, they are of the type \eqref{eq: tubes for Kakeya}. Provided $\rho_{\circ} > 0$ is sufficiently small,
\begin{equation*}
   |s|_\infty \leq \rho_\circ^{-1}\rho(E_{S_{j,R}}) < \rho_{\circ}^{n-\ell_{R,\bmu}(j)-1} \leq \rho_\circ, \qquad \textrm{since $\ell_{R,\bmu}(j) +1 \leq r \leq n-1$.}
\end{equation*}
 This observation will allow for an application of the bound $\fK(R)$.

Let $\cQ_{R^{1/2}}$ denote a cover of $Q_R := [-R,R]^n$ by essentially disjoint cubes of sidelength $R^{1/2}$, so that 
\begin{equation}\label{eq: itn 0}
    \int_{Q_R} \prod_{j = 1}^k |E_{S_{j, R}}f_j|^{q_j} \leq \sum_{Q \in \cQ_{R^{1/2}}} \int_{Q} \prod_{j = 1}^k |E_{S_{j, R}}f_j|^{q_j}.
\end{equation}
Fix $Q \in \cQ_{R^{1/2}}$. Provided $\rho_\circ>0$ is chosen sufficiently small, we can perform the wave packet decomposition from \S\ref{subsec: wp}. By the spatial localisation property,
\begin{equation*}
    |E_{S_{j, R}}f_j(x)| \lesssim_{N,\varepsilon} |E_{S_{j, R}}f_{j, Q}(x)| + R^{-N} \|f_j\|_2 \qquad \textrm{for all $N \in \N_0$ and $x \in Q$,}
\end{equation*}
where
\begin{equation*}
    f_{j, Q} := \sum_{T_j \in \bT_j[Q]} f_{j, T} \qquad \textrm{for} \qquad \bT_j[Q] := \{ T \in \bT_j : T \cap Q \neq \emptyset \}.
\end{equation*}
Thus,
\begin{equation*}
    \int_Q \prod_{j = 1}^k |E_{S_{j, R}}f_j|^{q_j} \lesssim_{\varepsilon} \int_Q \prod_{j = 1}^k |E_{S_{j, R}}f_{j, Q}|^{q_j} + R^{-100n} \prod_{j = 1}^k \|f_j\|_2^{q_j}.
\end{equation*}

For each $T\in\bT_j$, we have $\supp f_{j,T_j}\subseteq B(0,\rho_\circ+4\cdot R^{-1/2})$. Thus, for large $R$, we can guarantee that $\supp f_{j,Q} \subseteq W_0^{(j)}$. By the preservation of support property \eqref{eq: preserve supp}, provided $C_{\circ} \geq 1$ is chosen sufficiently large, each $f_{j,Q}$ satisfies $\supp f_{j,Q} \subseteq \cN_{C_{\circ}^2\bmu_j} \cS_j$. Moreover, by the support properties of our amplitude functions, we may assume without loss of generality that $\supp f_{j,Q} \subseteq \cN_{\btmu_j} \cS_j$ for $\btmu_j$ as defined in \S\ref{subsec: uncertainty red}. 

Define $\cL^{\star}(j) := \cL_{R, \bmu}(j)$ and, since $\mu_{j,\ell} \geq  C_{\circ}^{-4}\dbtilde{\mu}_{j,\ell} \geq R^{-1/2}\dbtilde{\mu}_{j,\ell}$, note that
\begin{equation*}
    \cL_{R^{1/2}, \bttmu}(j) \subseteq \cL^{\star}(j) \subseteq \{1, \dots, r_j\} \qquad \textrm{for $1 \leq j \leq k$.}
\end{equation*} 
From the observations of the previous paragraph, the $f_{j, Q}$ satisfy the hypothesis \eqref{eq: uncer red supp} of Lemma~\ref{lem: uncer red} with $\bmu$ replaced with $\btmu$. We may therefore apply Lemma \ref{lem: uncer red} to deduce that
\begin{equation}\label{eq: itn 1}
    \int_Q \prod_{j = 1}^k |E_{S_{j, R}}f_{j, Q}|^{q_j}  \lesssim  \fR(R^{1/2};  \bttmu) \bC_R(\bmu) \prod_{j = 1}^k  \|f_{j,Q}\|_2^{q_j},
\end{equation}
where $\bC_R(\bmu)$ is as defined in \eqref{eq: recursive const}.\footnote{Observe that, for our choice of $\cL^{\star}(j)$ and $\bmu$ replaced with $\btmu$, the factors on the right-hand side of \eqref{eq: uncer red} satisfy
\begin{equation*}
    \prod_{\ell \in \cL^{\star}(j)} (\tilde{\mu}_{j,\ell})^{m(j,\ell) q_j/2} \lesssim \prod_{\ell \in \cL_{R, \bmu}(j)} (\mu_{j,\ell})^{m(j,\ell) q_j/2}, 
\end{equation*}
justifying the appearance of $\bC_R(\bmu)$ in \eqref{eq: itn 1}.} Here we have also used the translation invariance of the estimates to recentre $Q$ at the origin. 

Taking the sum over $\cQ_{R^{1/2}}$ on both sides of \eqref{eq: itn 1} and combining the resulting estimate with \eqref{eq: itn 0}, we deduce that 
\begin{equation}
 \label{eq: itn 2}
 \int_{Q_R} \prod_{j = 1}^k |E_{S_{j, R}}f_j|^{q_j} \lesssim_{\varepsilon} \fR(R^{1/2}; \bttmu )  \bC_R(\bmu) \sum_{Q \in \cQ_{R^{1/2}}}  \prod_{j = 1}^k \|f_{j,Q}\|_2^{q_j}
\end{equation}
holds up to the inclusion of a rapidly decaying error term. It remains to estimate the right-hand sum in \eqref{eq: itn 2}.

By the orthogonality properties of the wave packets,
\begin{equation*}
    \prod_{j = 1}^k \|f_{j,Q}\|_2^2 \lesssim \prod_{j = 1}^k \Big( \sum_{T_j \in \bT_j[Q]} \|f_{j,T_j}\|_2^2\Big).
\end{equation*}
Furthermore, if $T_j \in \bT_j[Q]$, then it follows that $Q \subset T_j^{\,\circ}$, where $T_j^{\,\circ}$ denotes the slab with the same core plane as $T_j$ but with width scaled by a factor of $2$. Consequently,
\begin{equation*}
    \prod_{j = 1}^k \|f_{j,Q}\|_2^{q_j} \lesssim \frac{1}{|Q|}\int_Q \prod_{j = 1}^k \big| \sum_{T_j \in \bT_j} \|f_{j,T_j}\|_2^2 \cdot \chi_{T^{\,\circ}_j}(x)\big|^{q_j/2}\,\ud x.
\end{equation*}
Summing this inequality over all $Q \in \cQ_{R^{1/2}}$ and combining the resulting estimate with \eqref{eq: itn 2}, we obtain 
\begin{equation}\label{eq: itn 3}
    \int_{Q_R} \prod_{j = 1}^k |E_{S_{j,R}}f_j|^{q_j} \lesssim_{\varepsilon}\fR(R^{1/2}; \bttmu) \bC_R(\bmu)  R^{-n/2}  \int_{Q_R}\prod_{j = 1}^k \big|\sum_{T_j \in \bT_j} \|f_{j,T_j}\|_2^2 \cdot \chi_{T^{\,\circ}_j}\big|^{p_j}, 
\end{equation}
up to the inclusion of a rapidly decaying error term,
where we recall $p_j := q_j/2$ for $1 \leq j \leq k$.

By the definition of the constant $\fK(R)$ and the orthogonality of the wave packets,
\begin{align}
\nonumber
    \int_{Q_R} \prod_{j = 1}^k \big|\sum_{T_j \in \bT_j} \|f_{j,T_j}\|_2^2 \cdot \chi_{T^{\,\circ}}\big|^{p_j} &\lesssim_{\varepsilon} \fK(R) R^{n/2 + \varepsilon} \prod_{j = 1}^k \big(\sum_{T_j \in \bT_j} \|f_{j,T_j}\|_2^2\big)^{p_j} \\
    \label{eq: itn 4}
    &\lesssim \fK(R) R^{n/2 + \varepsilon} \prod_{j = 1}^k \|f_j\|_2^{q_j}.
\end{align}
Note that the slabs featured in the above displays (whose geometry coincides with that of the slabs defined in \eqref{eq: slabs 1}), are slightly wider (by a factor of $R^{\varepsilon_{\circ}}$ where $\varepsilon_{\circ} := \varepsilon/(100n)$) than those appearing in the definition of $\fK(R)$. This discrepancy can be dealt with using a simple covering argument, which accounts for the additional $R^{\varepsilon}$ factor on the right-hand side of the above inequality. 

Combining \eqref{eq: itn 4} with \eqref{eq: itn 3}, we obtain
\begin{equation*}
    \int_{Q_R} \prod_{j = 1}^k |E_{S_{j,R}}f_j|^{q_j} \lesssim_{\varepsilon} R^{\varepsilon}\fR(R^{1/2}; \bttmu)\fK(R)  \bC_R(\bmu) \prod_{j = 1}^k \|f_j\|_2^{q_j}
\end{equation*}
and therefore, by definition, $\fR(R; \bmu) \lesssim_{\varepsilon} R^{\varepsilon} \fR(R^{1/2}; \bttmu) \fK(R)$, as required. 
\end{proof}




\appendix




\section{A multilinear Marcinkiewicz--Zygmund theorem} 

Let $H$ be a separable complex Hilbert space and $\{e_{a}\}_{a \in \cA}$ be a choice of orthonormal basis for $H$, where $\cA$ is a countable set. Given any $g \in H$, we may write
\begin{equation*}
    g = \sum_{a \in \cA} \inn{g}{e_{a}} \quad \textrm{where} \quad \|g\|_{H} = \big(\sum_{a \in \cA} |\inn{g}{e_{a}}|^2\big)^{1/2}.
\end{equation*}
Let $(\Omega, \Sigma,\mu)$ and $(\Omega',\Sigma',\mu')$ be measure spaces. We say that $\bf \colon \Omega' \to H$ is $\Sigma'$-measurable if $\inn{\bf}{g}$ is a $\Sigma'$-measurable function for all $g \in H$. 
Let $T$ be a linear mapping sending $\Sigma'$-measurable functions on $\Omega'$ to $\Sigma$-measurable functions on $\Omega$. We define \textit{the vector-valued extension} $\bT$ to be the operator mapping $H$-valued measurable functions on $\Omega'$ to $H$-valued measurable functions on $\Omega$ given by
\begin{equation*}
    \inn{\bT\bf(x)}{e_{a}} := T (\inn{\bf}{e_{a}})(x) \qquad \textrm{for all $\bf \colon \Omega' \to H$ measurable and all $a \in \cA$.}
\end{equation*}
This uniquely defines the operator $\bT$, which is linear.

The classical Marcinkiewicz--Zygmund inequality \cite{Marcinkiewicz} for separable Hilbert spaces has the following multilinear counterpart. 

\begin{proposition}[Multilinear Marcinkiewicz--Zygmund Theorem]\label{prop: multiMZ}
Let $(\Omega, \Sigma, \mu)$ and $(\Omega_j, \Sigma_j, \mu_j)$ for $1 \leq j \leq m$ be $\sigma$-finite measure spaces. For $1 \leq j \leq m$, let $T_j$ be linear mappings sending $\Sigma_j$-measurable functions on $\Omega_j$ to $\Sigma$-measurable functions on $\Omega$. Let and $0 < p_j, q_j < \infty$ for $1 \leq j \leq k$ and further suppose that there exists some $M \geq 0$ such that
\begin{equation*}
    \int_{\Omega} \prod_{j=1}^k |T_jf_j(x)|^{q_j} \,\ud \mu(x) \leq M \prod_{j=1}^k \|f_j\|_{L^{p_j}(\Omega_j, \mu_j)}^{q_j}
\end{equation*}
holds for all $\Sigma_j$-measurable functions $f_j$, $1 \leq j \leq k$.

Let $H_j$ be separable complex Hilbert spaces and let $\bT_j$ denote the vector-valued extensions of $T_j$, $1 \leq j \leq k$. Then the inequality
\begin{equation*}
    \int_{\Omega}\prod_{j=1}^k \|\bT_j\bf_j(x)\|_{H_j}^{q_j} \,\ud \mu(x) \lesssim_{p,q} M \prod_{j=1}^k \|\|\bf_j\|_{H_j}\|_{L^{p_j}(\Omega_j, \mu_j)}^{q_j}
\end{equation*}
holds for all $\Sigma_j$-measurable functions $\bf_j \colon \Omega_j \to H_j$, $1 \leq j \leq k$.
\end{proposition}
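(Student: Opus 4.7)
The approach is to reduce the multilinear vector-valued inequality to the scalar hypothesis via Gaussian randomisation, carried out independently within each of the $k$ factors. For each $1\leq j\leq k$, fix an orthonormal basis $\{e_a^{(j)}\}_{a\in\cA_j}$ of $H_j$ and introduce an independent family of i.i.d.\ standard Gaussian random variables $\{\gamma_a^{(j)}\}_{a\in\cA_j}$ on an auxiliary probability space, with the different families chosen to be mutually independent across $j$. Given an $H_j$-valued measurable function $\bf_j$, consider the scalar random function
\begin{equation*}
F_j^{\gamma}(y) := \sum_{a\in\cA_j}\gamma_a^{(j)}\inn{\bf_j(y)}{e_a^{(j)}},
\end{equation*}
which lies in $L^{p_j}(\Omega_j,\mu_j)$ almost surely whenever $\bigl\|\|\bf_j\|_{H_j}\bigr\|_{L^{p_j}} < \infty$. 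The key identity is the classical Gaussian rotation formula $\E|\sum_a\gamma_a c_a|^r \sim_r \bigl(\sum_a|c_a|^2\bigr)^{r/2}$, valid for any $r>0$ and any square-summable scalar sequence $(c_a)$. Applied with $c_a = T_j(\inn{\bf_j}{e_a^{(j)}})(x) = \inn{\bT_j\bf_j(x)}{e_a^{(j)}}$, and using the linearity of $T_j$ to commute it past the Gaussian sum, this yields the pointwise equivalence
\begin{equation*}
\|\bT_j\bf_j(x)\|_{H_j}^{q_j} \sim_{q_j} \E_{\gamma^{(j)}}|T_j F_j^{\gamma^{(j)}}(x)|^{q_j}.
\end{equation*}

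I would then take the product of these equivalences over $j$, invoke the independence of the families $\gamma^{(1)},\dots,\gamma^{(k)}$ to rewrite the product of expectations as a single joint expectation $\E_\gamma$, and apply Fubini to obtain
\begin{equation*}
\int_\Omega\prod_{j=1}^k\|\bT_j\bf_j(x)\|_{H_j}^{q_j}\,\ud\mu(x) \sim \E_\gamma\int_\Omega\prod_{j=1}^k|T_jF_j^{\gamma^{(j)}}(x)|^{q_j}\,\ud\mu(x).
\end{equation*}
Conditional on $\gamma$, the functions $F_j^{\gamma^{(j)}}$ are deterministic scalar functions, so the hypothesis of the proposition applies to give the bound $M\prod_j\|F_j^{\gamma^{(j)}}\|_{L^{p_j}}^{q_j}$ for the inner integral. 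Taking expectations and invoking independence across $j$ once more reduces matters to proving, for each $j$ separately, the estimate
\begin{equation*}
\E_{\gamma^{(j)}}\|F_j^{\gamma^{(j)}}\|_{L^{p_j}(\Omega_j,\mu_j)}^{q_j} \lesssim_{p_j,q_j} \bigl\|\|\bf_j\|_{H_j}\bigr\|_{L^{p_j}(\Omega_j,\mu_j)}^{q_j}.
\end{equation*}

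This final step is precisely Kahane's inequality for Gaussian sums in a Banach space: viewed as a random element of the Banach space $L^{p_j}(\Omega_j,\mu_j)$, the function $F_j^{\gamma^{(j)}}$ is Gaussian, so all positive moments of its norm are comparable. Kahane's inequality therefore reduces the $q_j$-moment to the $p_j$-moment, and the $p_j$-moment evaluates, by Fubini and the pointwise scalar Gaussian identity applied in $y\in\Omega_j$, to a constant multiple of $\bigl\|\|\bf_j\|_{H_j}\bigr\|_{L^{p_j}}^{p_j}$, giving the required bound. The only genuine subtlety in the write-up is justifying the commutation of $T_j$ with the (possibly infinite) Gaussian series in the very first step; I would handle this by first restricting to $\bf_j$ taking values in finite-dimensional subspaces of $H_j$ and then extending to the general case by a density/monotone convergence argument. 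Beyond Kahane's inequality the argument uses only linearity and Fubini, and the bulk of the bookkeeping is organising the independent randomisations cleanly across the $k$ factors.
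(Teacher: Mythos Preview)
Your proposal is correct and is essentially the approach the paper has in mind: the paper only sketches the proof, saying the finite-dimensional case follows from ``a standard Khintchine's inequality argument and the Fubini--Tonelli theorem'' and the separable case from Fatou's lemma. Your Gaussian randomisation together with Kahane's inequality is exactly such an argument (Gaussians in place of Rademachers is an inessential variation), and your reduction to finite-dimensional $H_j$ followed by a limiting argument matches the paper's Fatou step.
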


If the Hilbert spaces $H_j$ are finite dimensional, the result follows from a standard Khintchine's inequality argument and the Fubini--Tonelli theorem. The extension to separable spaces follows from Fatou's lemma. We omit the details.




\bibliography{Reference}
\bibliographystyle{amsplain}

\end{document}